\newtheorem{theorem}{Theorem}[section]
\newtheorem{lemma}[theorem]{Lemma}
\newtheorem{proposition}[theorem]{Proposition}
\newtheorem{corollary}[theorem]{Corollary}
\newtheorem{definition}[theorem]{Definition}
\newtheorem{example}[theorem]{Example}
\newtheorem{remark}[theorem]{Remark}
\begin{document}
	
\title[$K_1$ and $K$-groups of absolute matrix order unit spaces]{$K_1$ and $K$-groups of absolute Matrix order unit spaces}
\author{Amit Kumar}
	
\address{Discipline of Mathematics, School of Basic Sciences, Indian Institute of Technology Bhubaneswar, Argul, Bhubaneswar, Pin - 752050, Odisha (State), India.}

\email{\textcolor[rgb]{0.00,0.00,0.84}{amit231291280895@gmail.com}}

\subjclass[2010]{Primary 46B40; Secondary 46L05, 46L30.}
	
\keywords{Absolute oder unit space, absolute matrix order unit space, path homotopy, order projection, partial isometry, unitary, partal unitary, $K_0,K_1,K$-groups, unital $\vert \cdot \vert$-preserving map, functoriality, order structue on $K_0,K_1,K$.}

\begin{abstract}
In this paper, we describe the Grothendieck groups $K_1(V)$ and $K(V)$ of an absolute matrix order unit space $V$ for unitary and partial unitary elements respectively. For this purpose, we study some basic properties of unitary and partial unitary elements, and define their path homotopy equivalence. The construction of $K(V)$ follows in a almost similar manner as that of $K_1(V).$ We prove that $K_1(V)$ and $K(V)$ are ordered abelian groups. We also prove that $K_1(V)$ and $K(V)$ are functors from the category of absolute matrix order unit spaces with morphisms as unital completely $\vert \cdot \vert$-preserving maps to the category of ordered abelian groups. Later, we show that under certain conditions, quotient of $K(V)$ is isomorphic to the direct sum of $K_0(V)$ and $K_1(V),$ where $K_0(V)$ is the Grothendieck group for order projections.
\end{abstract}

\thanks{The author was financially supported by the Institute Post-doctoral Fellowship of IIT Bhubaneswar, India.}

\maketitle

\section{Introduction}

Operator $K$-theory dwells in the heart of the area of non-commutative topology. It is an active research area and a much used tool for the study of C$^*$-algebras. $K$-theory  was introduced in C$^*$-algebra theory in the early 1970s through some specific applications. However, the germs of $K$-theory are found in the Grothendieck's work in algebraic geometry. $K$-theory was developed by Atiyah and Hirzebruch in the 1960s based on Grothendieck's this work. It plays a major role in structure theory of C$^*$-algebras. Due to its applications and appeal, operator $K$-theory has been central point of attraction for a wide range of mathematicians. For such and more informations, see \cite{B98, B06, RLL00, WO93} and references therein. 

Order structure is one of the basic component of C$^*$-algebra theory. In the light of Gelfand-Neumark theorem \cite{GN} and Kakutani theorem \cite{Kak}, we see that self-adjoint part of a commutative C$^*$-algebra forms a Banach lattice. However, as a contrast, Kadison's anti-lattice theorem \cite{Kad51} explains that the self-adjoint part of a non-commutative C$^*$-algebra is not a vector lattice. Therefore, the order structure of a C$^*$-algebra becomes an interesting area for study. The corresponding theory gives a birth to study of orderd vector spaces which do not have any vector lattice structure. Being motivated by works of Kadison, Effros, Str\o{}mer and Pedersen, and many others (see \cite{GKP} and references therein) which describe various aspects of order structure of C$^*$-algebras and encourage to expect a `non-commutative vector lattice' or a `near lattice' structure in it, Karn started working in this direction (see \cite{K10, K14, K16, K18} etc.).

The significant contribution in this direction begins with Kadison's functional representation of operator (C$^*$-) algebras \cite{RVK}. By this represenation, the self-adjoint part of every unital C$^*$-algebra $A$ can be expressed as the space of continuous affine functions on the state space $S(A)$ of $A.$ It becomes clear that the order structure of a C$^*$-algebra is rich with many properties. For more, we refer to \cite{LA, LA73, A74, B54, B56, E64, JE64, KFN} and references therein.

In few years, the author, in collaboration with Karn, has been working on the order theoretic aspects of C$^*$-algebras (see \cite{K19,PI19,KO21}). Karn is already working in this direction, in the collaboration with several others. In \cite{K18}, he introduced the notion of absolute order unit spaces. Self-adjoiint parts of unital C$^*$-algebras and unital $M$-spaces are examples of absolute order unit spaces. More generally, unital $JB$-algebras are examples of absolute order unit spaces. It is shown that under an additional condition (see \cite[Theorem 4.12]{K16}) absolute order unit spaces turn out to be vector lattices. Therefore an absolute order unit space may be termed as a `non-commutative vector lattice'.

In \cite{K19}, Karn with the author introduced and studied the notion of absolute matrix order unit space which is a matricial version of absolute order unit space. We also studied absolute value preseving (completely absolute value preseving) maps between two absolute order unit spaces (absolute matrix order unit spaces). We found that unital bijective absolute value preseving (completely absolute value preseving) maps between two absolute order unit spaces (absolute matrix order unit spaces) are isometries (complete isometries) \cite[Theorems 3.3 and 4.6]{K19}.

The notion of order projections in absolute order unit spaces has been introduced and studied by Karn in \cite{K18} and it generalizes order theoretically projections in C$^*$-algebras. In \cite{PI19} (continuation of \cite{K19}), Karn with the author generalized the notion of order projections in absolute order unit spaces to order projections in absolute matrix order unit spaces. We introduced and studied the notion of partial isometries in absolute matrix order unit spaces to study comparison of order projections. We also studied some variants of order projections in absolute matrix order unit spaces. 

Furher, in the continuation of \cite{K19} and \cite{PI19}, Karn with the author has worked in this direction in \cite{KO21}. In \cite{KO21}, we initiate a study of $K$-theory in absolute matrix order unit spaces. We studied direct limit $M_\infty(V)$ and order projections in $M_\infty(V)$ to describe Grothendieck group $K_0(V)$ of an absolute matrix order unit space $V$ following the same path as Grothendieck group $K_0(A)$ of a unital C$^*$-algebra $A$ is defined via the class of projections $\mathcal{P}_{\infty}(A)$ corresponding to the inductive (direct) limit $M_{\infty}(A)$ of the family of C$^*$-algebras $\lbrace M_n(A) \rbrace$. Later, we proved that $K_0(V)$ is a functor from the category of absolute matrix order unit spaces with morphisms as unital completely absolute value preserving maps to the category of ordered abelian groups \cite[Corollary 5.3]{KO21}. 

Let's recall the notion of order projections in $M_\infty(V)$ introduced in \cite{KO21}. Let $\mathfrak{p} \in M_\infty(V)$ and let $o(\mathfrak{p})$ be the order of $\mathfrak{p}$. Then $\mathfrak{p}$ is said to be order projection, if $\vert \mathfrak{e}^n-2\mathfrak{p}\vert=\mathfrak{e}^n$ for all $n\in \mathbb{N}$ such that $n\geq o(\mathfrak{p}).$ For an absolute matrix order unit space $V,$ we denote the set of all order projections, unitaries and partial unitaries in $M_n(V)$ by $\mathcal{OP}_n(V), \mathcal{U}_n(V)$ and $\mathcal{PU}_n(V)$ respectively, for any $n\in \mathbb{N}$ (for definitions see \cite[3.1]{PI19}). In \cite{KO21}, it is discussed that under the identification $M_\infty(V)=\displaystyle\bigcup_{n=1}^\infty M_n(V),$ the corresponding set of order projections in $M_\infty(V)$ is identified with $\mathcal{OP}_\infty(V)=\displaystyle\bigcup_{n=1}^\infty \mathcal{OP}_n(V).$ Similarly, we can also define the notion of unitaries and partial unitaries in $M_\infty(V)$ in the following way: Let $\nu \in M_\infty(V).$ Then $\nu$ is said to be unitary, if $\vert \nu\vert=\vert \nu^*\vert = \mathfrak{e}^{o(\nu)}$ and it is said to be partial unitary, if $\vert \nu\vert=\vert \nu^*\vert$ is an order projection. Again, under the identification $M_\infty(V)=\displaystyle\bigcup_{n=1}^\infty M_n(V),$ the corresponding set of unitaries and partial unitaries in $M_\infty(V)$ can be identified with $\mathcal{U}_\infty(V)=\displaystyle\bigcup_{n=1}^\infty \mathcal{U}_n(V)$ and $\mathcal{PU}_\infty(V)=\displaystyle\bigcup_{n=1}^\infty\mathcal{PU}_n(V)$ respectively.

In this paper, we continue our work from \cite{KO21}. We study some basic properties of unitaries and partial unitaries in an absolute matrix order unit space (Propositions \ref{30} and \ref{65}). We also study path homotopy equivalences of unitary and partial unitaries (Propositions \ref{33}, \ref{32}, \ref{67}, \ref{68} and Lemmas \ref{34}, \ref{69}). By help of path homotopy, we induce new equivalence relations for unitary and partial unitaries, and study their properties (Corollaries \ref{35}, \ref{62}, \ref{70}, \ref{73} and Propositions \ref{40}, \ref{61}, \ref{63}, \ref{71}, \ref{72}, \ref{74}). By such equivalence relations, we describe $K_1(V)$ and $K(V)$ the Grothendieck groups of an absoute matrix order unit space $V$ for unitary and partial unitaries elements respectively (Theorems \ref{64} and \ref{75}), which are generalizations of $K_1(A)$ and $K(A)$ of a C$^*$-algebra $A$ for unitary and partial unitaries. We show that $K_1(V)$ and $K(V)$ are ordered abelian groups (Theorems \ref{94} and \ref{84}). We also prove that $K_1(V)$ and $K(V)$ are functors from the category of absolute matrix order unit spaces with morphisms as unital completely $\vert \cdot \vert$-preserving maps to the category of ordered abelian groups (Corollaries \ref{80} and \ref{83}). Later, we find a relation of $K_1(V)$ and $K(V)$ with $K_0(V) $ (Theorem \ref{85}).

\section{$K_0$-group corresponding to an absolute matrix order unit space}
In this section, we recall the $K_0$-group of an absolute matrix order unit space that has been described by Karn and the author in \cite{KO21}. For that, we need to recall some preliminaries. We start this section with the following matricial notions:

Let $V$ be a complex vector space. We denote by $M_{m,n}(V)$ the vector space of all the $m \times n$ matrices $v=[v_{i,j}]$ with entries $v_{i,j} \in V$ and by $M_{m,n}$ the vector space of all the $m \times n$ matrices $a = [a_{i,j}]$ with entries $a_{i,j} \in \mathbb{C}$. We write $0_{m,n}$ for zero element in $M_{m,n}(V)$. For $m=n$, we write $0_{m,n} = 0_n$. We define $av = \begin{bmatrix} \displaystyle \sum_{k=1}^m a_{i,k}v_{k,j}\end{bmatrix}$ and $vb = \begin{bmatrix} \displaystyle \sum_{k=1}^n v_{i,k} b_{k,j} \end{bmatrix}$ for $a \in M_{r,m}, v \in M_{m,n}(V)$ and $b \in M_{n,s}$. We write 

\begin{center}
$v \oplus w = \begin{bmatrix} v & 0 \\ 0 & w\end{bmatrix}$ for $v \in M_{m,n}(V),w \in M_{r,s}(V).$
\end{center}

Here $0$ denotes suitable rectangular matrix of zero entries from $V$ \cite{ZJR88}. 

\subsection{Matrix Order Unit Spaces}
In this subsection, we recall some matrix theory of ordered vector spaces. For this theory, we refer to \cite{ZJR88, CE77, ER88, KV97} and references therein.

A complex vector space with an involution is called a \emph{$\ast$-vector space}. We write $V_{sa} = \lbrace v \in V: v = v^*\rbrace$. Then $V_{sa}$ is a real vector space \cite{CE77}. 

Next, we recall the definition of a matrix ordered space.

\begin{definition}[\cite{CE77}]
A \emph{matrix ordered space} is a $*$-vector space $V$ together with a sequence $\lbrace M_n(V)^+ \rbrace$ with $M_n(V)^+ \subset M_n(V)_{sa}$ for each $n \in \mathbb{N}$ satisfying the following conditions: 
\begin{enumerate}
	\item[(a)] $(M_n(V)_{sa}, M_n(V)^+)$ is a real ordered vector space, for each $n \in \mathbb{N}$; and  
	\item[(b)] $a^* v a \in M_m(V)^+$ for all $v \in M_n(V)^+$, $a \in M_{n,m}$ and $n ,m \in \mathbb{N}$. 
\end{enumerate} 
It is denoted by $(V, \lbrace M_n(V)^+ \rbrace)$. If, in addition, $e \in V^+$ is an order unit in $V_{sa}$ such that $V^+$ is proper and $M_n(V)^+$ is Archimedean for all $n \in \mathbb{N}$, then $V$ is called a \emph{matrix order unit space} and is denoted by $(V, \lbrace M_n(V)^+ \rbrace, e)$.
\end{definition}

The first part of next result elaborates that each level in a matrix ordered space is proper and generating provided it is proper and generating at ground level respectively, and the second part elaborates that a matrix ordered space has order unit at each level provided it has order unit at ground level.

\begin{proposition}\label{81}
Let $(V, \lbrace M_n(V)^+\rbrace)$ be a matrix ordered space. 
\begin{enumerate}
\item[(1)]$($\cite{KV97}, {\it Proposition} 1.8$).$
\begin{enumerate}
\item[(a)] If $V^+$ is proper, then $M_n(V)^+$ is proper for all $n \in \mathbb{N}$.
\item[(b)] If $V^+$ is generating, then $M_n(V)^+$ is generating for all $n \in \mathbb{N}$.
\end{enumerate}
\item[(2)] $($\cite{KV97}, Lemma 2.6$).$ If $e \in V^+$ is an order unit for $V_{sa}$. Then $e^n$ is an order unit for $M_n(V)_{sa}$ for all $n \in \mathbb{N}$ ( where $e^n : = e \oplus  \cdots \oplus e \in M_n(V)$).
\end{enumerate}
\end{proposition}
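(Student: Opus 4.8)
The plan is to derive both statements directly from axiom (b) of a matrix ordered space, which lets one transport positivity between the levels $M_n(V)$ via the congruences $v \mapsto a^* v a$. Throughout I would use the scalar column vectors $e_i \in M_{n,1}$ together with the rectangular coordinate matrices that embed a small block into an $n \times n$ matrix. I first note that the order-unit hypothesis in part (2) already forces $V^+$ to be generating: if $e$ is an order unit then any $h \in V_{sa}$ satisfies $h = \lambda e - (\lambda e - h)$ with both summands in $V^+$, so part (1)(b) is automatically available in that setting.

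For part (1)(a), suppose $v \in M_n(V)^+ \cap (-M_n(V)^+)$. Applying (b) with the scalar column $a = e_i$ gives $e_i^* v e_i = v_{i,i} \in V^+ \cap (-V^+) = \lbrace 0 \rbrace$, so every diagonal entry vanishes. Taking $a = e_i + e_j$ and then $a = e_i + \iota e_j$ (with $\iota = \sqrt{-1}$), and again intersecting $V^+$ with $-V^+$, forces $v_{i,j} + v_{j,i} = 0$ and $v_{i,j} - v_{j,i} = 0$, whence $v_{i,j} = 0$. Thus $v = 0$ and $M_n(V)^+$ is proper.

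The heart of the argument is part (1)(b), and it is reused for part (2). I would split a self-adjoint $v = [v_{i,j}]$ into its diagonal part and its off-diagonal $2 \times 2$ blocks. Each diagonal block $E_{ii}(h) = e_i h e_i^*$ is positive whenever $h \in V^+$ by (b), so writing each $v_{i,i} = p_i - q_i$ with $p_i, q_i \in V^+$ disposes of the diagonal. The off-diagonal blocks are the obstacle, since an entry $v_{i,j}$ and its adjoint are linked. I resolve this with the congruences $a = [1, \pm 1]$ and $a = [1, \pm \iota]$: for $h \in V^+$ these show that $\begin{bmatrix} h & \pm h \\ \pm h & h \end{bmatrix}$ and $\begin{bmatrix} h & \pm \iota h \\ \mp \iota h & h \end{bmatrix}$ all lie in $M_2(V)^+$, whence $\begin{bmatrix} 0 & h \\ h & 0 \end{bmatrix}$ and $\begin{bmatrix} 0 & \iota h \\ -\iota h & 0 \end{bmatrix}$ are each differences of positive matrices. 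Decomposing a general off-diagonal entry as $w = h_1 + \iota h_2$ with $h_k \in V_{sa}$, and each $h_k$ as a difference of positive elements, then expresses every off-diagonal block as a difference of positive $2 \times 2$ matrices. Finally I embed each block into $M_n(V)$ by a further congruence $X \mapsto b^* X b$ with $b \in M_{2,n}$ the relevant coordinate matrix, and sum; since the cone is closed under addition, $v \in M_n(V)^+ - M_n(V)^+$.

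For part (2) I would run the same block decomposition but track bounds instead of mere differences. Since $e^n = \sum_i E_{ii}(e) \in M_n(V)^+$, it suffices to dominate each piece of $v$ by a multiple of $e^n$. For a diagonal block, $-\lambda_i e \le v_{i,i} \le \lambda_i e$ lifts through (b) to $-\lambda_i E_{ii}(e) \le E_{ii}(v_{i,i}) \le \lambda_i E_{ii}(e) \le \lambda_i e^n$. For an off-diagonal block the $2 \times 2$ relations above give $-\mathrm{diag}(h,h) \le \begin{bmatrix} 0 & h \\ h & 0 \end{bmatrix} \le \mathrm{diag}(h,h)$ together with its imaginary analogue; applying the order unit to the positive parts of $h_1, h_2$ then bounds each off-diagonal block by a multiple of $E_{ii}(e) + E_{jj}(e) \le e^n$. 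Taking the maximum of the finitely many resulting scalars produces a single $\mu > 0$ with $-\mu e^n \le v \le \mu e^n$, so $e^n$ is an order unit for $M_n(V)_{sa}$. The only delicate point throughout is bookkeeping the complex-scalar congruences correctly so that the real and imaginary parts of the off-diagonal entries are cleanly separated; once that is set up, the rest is routine additivity and taking maxima.
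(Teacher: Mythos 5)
The paper does not actually prove this proposition; it is recalled from \cite{KV97} (Proposition 1.8 and Lemma 2.6 there) and stated without argument, so there is no in-paper proof to compare yours against. Your blind proof is correct and is the standard argument for these facts: properness via the congruences $a = e_i$, $a = e_i + e_j$, $a = e_i + \iota e_j$ applied to $v$ and $-v$; generation and the order-unit estimate via the diagonal congruences $E_{ii}$ and the $2 \times 2$ congruences $a = [1, \pm 1]$, $a = [1, \pm \iota]$, followed by embedding each block into $M_n(V)$ and summing. The one slip is at the very end of part (2): since $v$ is written as a sum of finitely many pieces, each dominated by some $\mu_k e^n$, the final constant must be the sum of the $\mu_k$ (or the number of pieces times their maximum), not the maximum alone; as there are only finitely many blocks this is harmless, but as written the last inequality does not follow.
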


In fact, matrix ordered spaces are normed spaces that is clear from the following result:

\begin{theorem}[\cite{CE77}]\label{1} 
Let $(V, e)$ be a matrix order unit space. Then 
\begin{enumerate}
\item[(1)] The order unit determines a matrix norm on $V$ in the following way: 
$$\Vert v \Vert_n := \inf \lbrace k > 0: \begin{bmatrix} k e^n & v \\ v^* & k e^n \end{bmatrix} \in M_{2n}(V)^+ \rbrace \quad \textrm{for} ~ v \in M_n(V).$$ 

\item[(2)] $\left \Vert \begin{bmatrix} u & 0\\ 0 & v \end{bmatrix} \right \Vert_{m+n}=\max \lbrace \Vert u\Vert_m, \Vert v\Vert_n \rbrace$ for all $u\in M_m(V)$ and $v\in M_n(V).$

\item[(3)] $\Vert \alpha v\beta \Vert_n \leq \Vert \alpha \Vert \Vert v\Vert_n \Vert \beta\Vert$ for all $\alpha,\beta \in M_n.$ 

\item[(4)]$\Vert \alpha v \beta \Vert_n=\Vert v\Vert_n$ for all $\alpha,\beta \in M_n$ such that $\alpha^*\alpha=I_n$ and $\beta \beta^*=I_n$ (i.e. for $\alpha$ isometry and $\beta$ co-isometry).

\item[(5)] $\Vert v\Vert_n =\Vert v^*\Vert_n=\left\Vert \begin{bmatrix} 0 & v\\ v^* & 0\end{bmatrix}\right \Vert_{2n}.$
\end{enumerate}
In other words, $(V,\lbrace\Vert \cdot\Vert_n\rbrace)$ is a $L^\infty$-matricially normed space \cite{ZJR88}.
\end{theorem}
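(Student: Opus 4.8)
The plan is to extract everything from two structural inputs supplied by Proposition~\ref{81}: at each level $e^n$ is an order unit for $M_n(V)_{sa}$ with $M_n(V)^+$ proper and Archimedean, and positivity is preserved under the congruence $w \mapsto a^* w a$ for scalar matrices $a$ (condition (b) of the matrix ordered space). Throughout I abbreviate the defining matrix by $W_n(v,k) := \left[\begin{smallmatrix} ke^n & v \\ v^* & ke^n \end{smallmatrix}\right]$, noting that $W_n(v,k) = ke^{2n} + \left[\begin{smallmatrix} 0 & v \\ v^* & 0 \end{smallmatrix}\right]$ is self-adjoint. For (1), finiteness of the infimum is immediate: since $e^{2n}$ is an order unit for $M_{2n}(V)_{sa}$ and $\left[\begin{smallmatrix} 0 & v \\ v^* & 0 \end{smallmatrix}\right]$ is self-adjoint, some $k$ makes $W_n(v,k) \in M_{2n}(V)^+$, so the set we minimize over is nonempty. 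Positive homogeneity and subadditivity follow from the cone structure of $M_{2n}(V)^+$ via $t\,W_n(v,k) = W_n(tv, tk)$ for $t>0$ and $W_n(v,k_1) + W_n(w,k_2) = W_n(v+w, k_1+k_2)$; complex homogeneity comes from conjugating $W_n(v,k)$ by the diagonal unitary $\mathrm{diag}(\mu I_n, I_n)$, which replaces $v$ by $\bar\mu v$ and thus adjusts its phase. The one genuinely nontrivial axiom is definiteness, and this is precisely where the hypotheses are spent: if $\Vert v \Vert_n = 0$ then $W_n(v,k) \geq 0$ for all $k>0$, so the Archimedean property yields $\left[\begin{smallmatrix} 0 & v \\ v^* & 0 \end{smallmatrix}\right] \geq 0$; running the same argument for $-v$ (conjugate by $\mathrm{diag}(-I_n, I_n)$) gives $\left[\begin{smallmatrix} 0 & v \\ v^* & 0 \end{smallmatrix}\right] \leq 0$, whence properness of $M_{2n}(V)^+$ forces it to vanish and $v = 0$.

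Parts (3) and (4) rest on a single contraction lemma: if $\alpha \in M_n$ with $\Vert \alpha \Vert \leq 1$, then $\alpha e^n \alpha^* \leq e^n$. Indeed $I_n - \alpha\alpha^* \geq 0$ factors as $\gamma\gamma^*$, and $e^n - \alpha e^n \alpha^* = \gamma e^n \gamma^* \in M_n(V)^+$ by condition (b) applied to $e^n \geq 0$. Granting this, I conjugate $W_n(v,k)$ by $\mathrm{diag}(\alpha, I_n)$ to obtain $\left[\begin{smallmatrix} k\alpha\alpha^* e^n & \alpha v \\ (\alpha v)^* & ke^n \end{smallmatrix}\right] \geq 0$, and adding the positive matrix $\left[\begin{smallmatrix} k(e^n - \alpha\alpha^* e^n) & 0 \\ 0 & 0 \end{smallmatrix}\right]$ upgrades the corner to $ke^n$, proving $\Vert \alpha v \Vert_n \leq \Vert \alpha \Vert \, \Vert v \Vert_n$. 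The symmetric congruence by $\mathrm{diag}(I_n, \beta^*)$ gives $\Vert v\beta \Vert_n \leq \Vert \beta \Vert \, \Vert v \Vert_n$, and composing these yields (3). Since a square isometry (co-isometry) is unitary, applying (3) to both $\alpha v \beta$ and $v = \alpha^*(\alpha v \beta)\beta^*$ gives the two matching inequalities, hence the equality (4).

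Finally, (2) and (5) are permutation-and-compression arguments built on (1) and (3). For (2), the matrix $W_{m+n}(u \oplus v, k)$ is the image of $W_m(u,k) \oplus W_n(v,k)$ under conjugation by a permutation matrix; as permutations are unitary scalar matrices, condition (b) transfers positivity in both directions, giving $\Vert u \oplus v \Vert_{m+n} \leq \max\{\Vert u \Vert_m, \Vert v \Vert_n\}$, while compressing $W_{m+n}(u\oplus v,k)$ by the coordinate-selecting rectangular scalar matrices recovers $W_m(u,k)$ and $W_n(v,k)$ and supplies the reverse inequality. For (5), conjugating $W_n(v,k)$ by the swap $\left[\begin{smallmatrix} 0 & I_n \\ I_n & 0 \end{smallmatrix}\right]$ produces $W_n(v^*,k)$, so $\Vert v \Vert_n = \Vert v^* \Vert_n$; and the $4n \times 4n$ matrix defining $\left\Vert \left[\begin{smallmatrix} 0 & v \\ v^* & 0 \end{smallmatrix}\right] \right\Vert_{2n}$ decouples, after a coordinate permutation, into $W_n(v,k) \oplus W_n(v^*,k)$, so by (1) its norm equals $\max\{\Vert v \Vert_n, \Vert v^* \Vert_n\} = \Vert v \Vert_n$. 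The main obstacle in the whole argument is the definiteness step of (1) together with the contraction lemma underlying (3); once these are secured, (2), (4), and (5) reduce to bookkeeping with scalar congruences.
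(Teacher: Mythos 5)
The paper offers no proof of this theorem: it is recalled verbatim from \cite{CE77} (with the $L^\infty$-matricial normed space terminology from \cite{ZJR88}), so there is no in-paper argument to compare against. Your reconstruction is correct and is essentially the standard Choi--Effros argument. All five parts are reduced to the right two ingredients: the order-unit/Archimedean/proper structure of $M_{2n}(V)^+$ (for well-definedness, definiteness and the triangle inequality of $\Vert\cdot\Vert_n$), and congruence by scalar matrices via axiom (b) of a matrix ordered space (for everything else). The two points a referee would look at most closely are handled properly: definiteness via the Archimedean property applied to $\pm v$ followed by properness of the cone, and the contraction lemma $\alpha e^n \alpha^* \le e^n$ for $\Vert\alpha\Vert\le 1$ obtained by factoring $I_n-\alpha\alpha^*=\gamma\gamma^*$ and invoking axiom (b). The only steps left implicit are routine and standard: that $x\oplus y\ge 0$ whenever $x,y\ge 0$ (itself a consequence of axiom (b) applied to the coordinate inclusions, which you do use elsewhere), and the passage from $\Vert\alpha v\Vert_n\le\Vert v\Vert_n$ for contractive $\alpha$ to the general inequality by rescaling. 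Neither is a gap.
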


In \cite{K10,K14,K16,K18,K19,PI19}, some variants of orthoganity in ordered normed spaces have been introduced and studied in terms of norm by Karn. We recall some of them here:

Let $u, v \in M_n(V)^+$. We say that $u$ is $\infty$-\emph{orthogonal}, (we write $u \perp_{\infty} v$), if $\Vert u + k v \Vert_n = \max \lbrace \Vert u \Vert, \Vert k v \Vert_n \rbrace$ for all $k \in \mathbb{R}$ \cite[Definition 3.6]{K18}. Recall that for $u, v \in M_n(V)^+ \setminus \lbrace 0 \rbrace$, we have $u \perp_{\infty} v$ if and only if $\Vert \Vert u \Vert_n^{- 1} u + \Vert v \Vert_n^{- 1} v \Vert_n = 1$ \cite[Theorem 3.3]{K14}. For $u, v \in M_n(V)^+ \setminus \lbrace 0 \rbrace$, we say that $u \perp_{\infty}^a v$, if $u_1 \perp_{\infty} v_1$ whenever $0 \le u_1 \le u$ and $0 \le v_1 \le v$. It was proved in \cite{K16, K18} that if $A$ is a C$^*$-algebra and if $a, b \in A^+ \setminus \lbrace 0 \rbrace$, then $a \perp_{\infty}^a b$ if and only if $a b = 0$ (that is, $a$ is algebraically orthogonal to $b$).

\subsection{Absolute Matrix Order Unit Spaces}
In this subsection, we briefly recall absolutely matrix ordered spaces and absolute matrix order unit spaces which are matricial versions are absolutely ordered spaces and absolute order unit spaces respectively. The theory of absolutely ordered spaces and absolute order unit spaces have been introduced and studied by Karn in \cite{K18}, and the theory of absolutely matrix ordered spaces and absolute matrix order unit spaces have been introduced and studied by Karn and the author in \cite{K19}. We begin with the definition of absolutely matrix ordered spaces.

\begin{definition}[\cite{K19}, Definition 4.1]\label{152}
Let $(V, \lbrace \ M_n(V)^+ \rbrace)$ be a matrix ordered space and assume that  $\vert\cdot\vert_{m,n}: M_{m,n}(V) \to M_n(V)^+$ for $m, n \in \mathbb{N}$. Let us write $\vert\cdot\vert_{n,n} = \vert\cdot\vert_n$ for every $n \in \mathbb{N}$. Then $\left(V, \lbrace M_n(V)^+ \rbrace, \lbrace \vert\cdot\vert_{m,n} \rbrace \right)$ is called an \emph{absolutely matrix ordered space}, if it satisfies the following conditions: 
\begin{enumerate}
\item[$1.$] For all $n \in \mathbb{N}$, $(M_n(V)_{sa}, M_n(V)^+, \vert\cdot\vert_n)$ is an absolutely ordered space;
\item[$2.$] For $v \in M_{m,n}(V), \alpha \in M_{r,m}$ and $\beta \in M_{n,s},$ we have
$$\vert \alpha v \beta \vert_{r,s} \leq \| \alpha \| \vert \vert v \vert_{m,n} \beta \vert_{n,s};$$
\item[$3.$] For $v \in M_{m,n}(V)$ and $w \in M_{r,s}(V),$ we have
$$\vert v \oplus w\vert_{m+r,n+s} = \vert v \vert_{m,n} \oplus \vert w \vert_{r,s}.$$
\end{enumerate} 
\end{definition}

\begin{example}[\cite{K19}, Example 4.4]
Let $A$ be a $C^*$-algebra. Then, for each $n\in \mathbb{N},M_n(A)$ is a $C^*$-algebra. If $M_n(A)^+$ denotes the set of all the positive elements in $M_n(A),$ then $(A,\lbrace M_n(A)^+\rbrace)$ is a matrix ordered space. For $m,n \in \mathbb{N},$ we define $\vert \cdot \vert_{m,n}:M_{m,n}(A)\to M_n(A)^+$ given by $\vert a\vert_{m,n}= (a^*a)^{\frac{1}{2}}$ for all $a\in M_{m,n}(A).$ Then $(A,\lbrace M_n(A)^+\rbrace,\lbrace \vert \cdot \vert_{m,n}\rbrace)$ is an absolutely matrix ordered space. 
\end{example}

The following two results describes some basic properties of absolutely matrix ordered spaces.

\begin{proposition}[\cite{K19}, Proposition 4.2]\label{91}
Let $(V, \lbrace M_n(V)^+ \rbrace, \lbrace \vert\cdot\vert_{m,n} \rbrace)$ be an absolutely matrix ordered space.  
\begin{enumerate}
\item[(1)] If $\alpha \in M_{r,m}$ is an isometry i.e. $\alpha^* \alpha = I_m,$ then $\vert \alpha v \vert_{r,n} = \vert v \vert_{m,n}$ for any $v \in M_{m,n}(V).$
\item[(2)] If $v \in M_{m,n}(V),$ then $\left\vert \begin{bmatrix} 0_m & v \\ v^* & 0_n \end{bmatrix} \right\vert_{m+n} = \vert v^* \vert_{n,m} \oplus \vert v \vert_{m,n}.$
\item[(3)] $\begin{bmatrix} \vert v^* \vert_{n,m} & v \\ v^* & \vert v \vert_{m,n} \end{bmatrix} \in M_{m+n}(V)^+$ for any $v \in M_{m,n}(V).$
\item[(4)] $\vert v \vert_{m,n} = \left\vert \begin{bmatrix} v \\ 0 \end{bmatrix} \right\vert_{m+r,n}$ for any $v \in M_{m,n}(V)$ and $r \in \mathbb{N}.$
\item[(5)] $\vert v \vert_{m,n} \oplus 0_s = \left\vert \begin{bmatrix} v & 0 \end{bmatrix} \right\vert_{m,n+s}$ for any $v \in M_{m,n}(V)$ and $s \in \mathbb{N}.$
\end{enumerate} 
\end{proposition}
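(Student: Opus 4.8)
The plan is to obtain all five items from the three defining conditions of Definition~\ref{152}, with part~(1) serving as the engine for the rest. For part~(1) I would invoke condition~(2) of Definition~\ref{152} twice. Applying it with $\beta = I_n$ and using that $\Vert\alpha\Vert = 1$ for an isometry (since $\Vert\alpha\Vert^2 = \Vert\alpha^*\alpha\Vert = 1$) together with $\vert\,\vert v\vert_{m,n}\,\vert_n = \vert v\vert_{m,n}$ yields $\vert\alpha v\vert_{r,n}\le\vert v\vert_{m,n}$. For the reverse inequality I would exploit $v = \alpha^*(\alpha v)$, valid exactly because $\alpha^*\alpha = I_m$, and apply condition~(2) once more with $\alpha^*$ (noting $\Vert\alpha^*\Vert = \Vert\alpha\Vert = 1$) to get $\vert v\vert_{m,n}\le\vert\alpha v\vert_{r,n}$. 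Properness of $M_n(V)^+$, from Proposition~\ref{81}(1)(a), makes the order antisymmetric, so the two bounds give equality.

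The crux of the proposition is part~(2), and the key realization is that the antidiagonal block matrix is a row permutation of a direct sum. Setting $\Pi = \begin{bmatrix} 0_{m,n} & I_m \\ I_n & 0_{n,m}\end{bmatrix}\in M_{m+n}$, a direct block multiplication confirms $\begin{bmatrix} 0_m & v \\ v^* & 0_n\end{bmatrix} = \Pi\,(v^*\oplus v)$. As $\Pi$ is a permutation matrix it is an isometry, so part~(1) gives $\left\vert\begin{bmatrix} 0_m & v \\ v^* & 0_n\end{bmatrix}\right\vert_{m+n} = \vert v^*\oplus v\vert_{m+n}$, and condition~(3) of Definition~\ref{152} rewrites the right-hand side as $\vert v^*\vert_{n,m}\oplus\vert v\vert_{m,n}$. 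Part~(3) is then a one-line corollary: $M_{m+n}(V)_{sa}$ is an absolutely ordered space by condition~(1), so its absolute value obeys $\vert w\vert\ge -w$; taking $w = \begin{bmatrix} 0_m & v \\ v^* & 0_n\end{bmatrix}$ and substituting the value of $\vert w\vert_{m+n}$ just found turns $\vert w\vert_{m+n} + w\ge 0$ into the asserted positivity of $\begin{bmatrix} \vert v^*\vert_{n,m} & v \\ v^* & \vert v\vert_{m,n}\end{bmatrix}$.

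Parts~(4) and~(5) are zero-padding statements that again reduce to part~(1). Part~(4) is immediate from the factorization $\begin{bmatrix} v \\ 0\end{bmatrix} = \begin{bmatrix} I_m \\ 0\end{bmatrix} v$ with the isometry $\begin{bmatrix} I_m \\ 0\end{bmatrix}\in M_{m+r,m}$. For part~(5) I would fix any $r\ge 1$ and observe that $v\oplus 0_{r,s} = \begin{bmatrix} v & 0 \\ 0 & 0_{r,s}\end{bmatrix}$ is exactly $\begin{bmatrix} v & 0\end{bmatrix}\in M_{m,n+s}(V)$ with $r$ zero rows adjoined; hence part~(4) gives $\left\vert\begin{bmatrix} v & 0\end{bmatrix}\right\vert_{n+s} = \vert v\oplus 0_{r,s}\vert_{n+s}$, while condition~(3) of Definition~\ref{152} evaluates the latter as $\vert v\vert_{m,n}\oplus 0_s$.

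The main obstacle is conceptual rather than computational, and it lives in part~(2). The naive route is to guess that $\vert v^*\vert_{n,m}\oplus\vert v\vert_{m,n}$ is the modulus of $w$ and to verify this through the positive and negative parts $\tfrac12(\vert w\vert\pm w)$; but checking that these parts are orthogonal is, by the very definition of orthogonality in an absolutely ordered space, equivalent to the identity $\vert w\vert = \vert v^*\vert_{n,m}\oplus\vert v\vert_{m,n}$ one is trying to prove, so that approach is circular. Spotting the permutation $\Pi$ is precisely what breaks the circle: it reduces the computation of a modulus to the already-proved invariance in part~(1) plus the additivity in condition~(3), bypassing any lattice- or orthogonality-theoretic argument. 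The only other place to stay alert is the backward inequality in part~(1), where one must pass through $v = \alpha^*(\alpha v)$ rather than attempt to invert the non-square isometry $\alpha$.
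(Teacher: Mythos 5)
The paper merely recalls this proposition from \cite{K19} (Proposition 4.2) and supplies no proof of its own, so there is no internal argument to compare against; judged on its own terms, your proposal is correct and is the standard derivation from the axioms of Definition~\ref{152}. The two-sided estimate for part~(1) via $\Vert\alpha\Vert=1$ and the identity $v=\alpha^*(\alpha v)$, the factorization $\begin{bmatrix} 0_m & v \\ v^* & 0_n\end{bmatrix}=\Pi\,(v^*\oplus v)$ with the permutation isometry $\Pi$ for part~(2), the positivity $\vert w\vert_{m+n}+w\ge 0$ for part~(3), and the reduction of (4)--(5) to part~(1) and axiom~(3) all check out (the only implicit ingredients being $\vert u\vert_n=u$ for $u\in M_n(V)^+$ and properness of $M_n(V)^+$ for antisymmetry, both of which hold here).
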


\begin{lemma}[\cite{PI19}, Lemma 3.1]\label{95} 
Let $V$ be an absolutely matrix ordered space and let $n\in \mathbb{N}.$ Then $\vert \alpha^* v \alpha \vert_n = \alpha^* \vert v\vert_n \alpha,$ if $v \in M_n(V)$ and $\alpha \in M_n$ is a unitary. 
\end{lemma}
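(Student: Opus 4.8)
The plan is to prove the identity by sandwiching it between two inequalities, both extracted from the submultiplicativity estimate in condition~$2$ of Definition~\ref{152}: once applied to $v$ conjugated by $\alpha$, and once to $\alpha^* v \alpha$ conjugated by $\alpha^*$. The entire argument hinges on an auxiliary \emph{positive-element identity}: for $p \in M_n(V)^+$ and a unitary $\alpha \in M_n$ one has $\vert p \alpha \vert_n = \alpha^* p \alpha$. Granting this, condition~$2$ (with left factor $\alpha^*$, right factor $\alpha$, and $\Vert \alpha^* \Vert = 1$) gives the upper bound $\vert \alpha^* v \alpha \vert_n \le \vert\,\vert v\vert_n \alpha\vert_n = \alpha^* \vert v \vert_n \alpha$, the last equality being the auxiliary identity applied to $p = \vert v \vert_n$.

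For the reverse inequality I would exploit the symmetry of the setup. Writing $v = \alpha (\alpha^* v \alpha) \alpha^*$ and invoking condition~$2$ once more yields $\vert v \vert_n \le \vert\,\vert \alpha^* v \alpha\vert_n \, \alpha^*\vert_n = \alpha \,\vert \alpha^* v \alpha \vert_n\, \alpha^*$, now applying the auxiliary identity to $p = \vert \alpha^* v \alpha\vert_n$ and the unitary $\alpha^*$. Applying the order-preserving conjugation $x \mapsto \alpha^* x \alpha$ (which maps $M_n(V)^+$ into itself by condition~$(b)$ of the matrix-ordered-space axioms) then produces $\alpha^* \vert v \vert_n \alpha \le \vert \alpha^* v \alpha \vert_n$. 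Combining the two bounds closes the proof, and the second bound requires no fresh computation since it is literally the first one under the substitution $\alpha \leftrightarrow \alpha^*$, $v \leftrightarrow \alpha^* v \alpha$.

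It remains to establish the auxiliary identity, which carries the real content. First, $\alpha^* p \alpha \in M_n(V)^+$ because congruence by any matrix sends positive elements to positive elements; hence the absolute value fixes it, i.e.\ $\vert \alpha^* p \alpha\vert_n = \alpha^* p \alpha$, since in an absolutely ordered space the absolute value of a positive element equals that element. On the other hand $\alpha^*$ is an isometry, as $(\alpha^*)^* \alpha^* = \alpha \alpha^* = I_n$, so Proposition~\ref{91}(1) applied to the element $p\alpha$ gives $\vert \alpha^*(p\alpha)\vert_n = \vert p\alpha\vert_n$. Reading these two facts together yields $\vert p \alpha\vert_n = \vert \alpha^* p \alpha\vert_n = \alpha^* p \alpha$, as claimed.

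The main obstacle is precisely the handling of \emph{right} multiplication by a unitary: Proposition~\ref{91}(1) controls only left multiplication by an isometry, so the right factor must be routed through condition~$2$ of Definition~\ref{152}, after which the emerging term $\vert\,\vert v\vert_n \alpha\vert_n$ cannot be simplified directly and must instead be collapsed via the positive-element identity. Once that identity is secured, the rest is bookkeeping, and the symmetry of the two inequalities means the argument never has to be repeated in full.
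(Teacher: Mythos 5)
Your argument is correct: the two-sided estimate via condition~2 of Definition~\ref{152}, collapsed using the auxiliary identity $\vert p\alpha\vert_n=\vert\alpha^*(p\alpha)\vert_n=\alpha^*p\alpha$ (Proposition~\ref{91}(1) plus the axiom that $\vert w\vert_n=w$ for $w\in M_n(V)^+$), together with positivity of congruence to pass from $\vert v\vert_n\le\alpha\vert\alpha^*v\alpha\vert_n\alpha^*$ to the reverse inequality, is exactly the proof of \cite[Lemma~3.1]{PI19}, which the present paper only cites rather than reproves. No gaps.
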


Next, we recall the another variant of orthogonality for general elements in absolutely matrix ordered spaces in terms of absolute value, introduced and studied by Karn and the author in \cite{PI19}. This orthogonaly for positive elements in absolutely ordered spaces has been introduced and studied earlier by Karn in \cite{K18}.

\begin{definition}[\cite{PI19}, Definitions 2.2 and 2.4]	
Let $\left(V, \lbrace M_n(V)^+ \rbrace, \lbrace \vert\cdot\vert_{m,n} \rbrace \right)$ be an \emph{absolutely matrix ordered space}. Let $u,v \in M_n(V)^+$ for some $n\in \mathbb{N}.$ We say that $u$ is \emph{orthogonal} to $v,$ (we write it, $u\perp v$), if $\vert u-v\vert_n=u+v.$ Now, if $u,v \in M_n(V)_{sa},$ we say that $u$ is \emph{orthogonal} to $v,$ (we still write it, $u\perp v$), if $\vert u\vert_n \perp \vert v\vert_n.$ Further, if $u, v \in M_{m,n}(V)$ for some $m,n \in \mathbb{N}$, we say that $u$ is \emph{orthogonal} to $v$, (we continue to write, $u \perp v$), if $\begin{bmatrix} 0 & u \\ u^* & 0 \end{bmatrix} \perp \begin{bmatrix} 0 & v \\ v^* & 0 \end{bmatrix}$ in $M_{m+n}(V)_{sa}$.
\end{definition}

The next result associates orthogonality of general elements in absolutely matrix ordered spaces to orthogonality of positive elements. 

\begin{remark}[\cite{PI19}, Remark 2.2(2)]\label{92}
Let $V$ be an absolutely matrix ordered space and let $u, v \in M_{m,n}(V)$ for some $m,n \in \mathbb{N}.$ Then $u\perp v$ if and only if $\vert u\vert_{m,n} \perp \vert v\vert_{m,n}$ and $\vert u^*\vert_{n,m} \perp \vert v^*\vert_{n,m}.$
\end{remark}

The following result tells that absolute values are additive on orthogonal elements.

\begin{proposition}[\cite{PI19}, Proposition 2.3(1)]\label{88}
	Let $V$ be an absolutely matrix ordered space and let $u,v\in M_{m,n}(V)$. Then $u\perp v$ if and only if, $\vert u\pm v\vert_{m,n} = \vert u\vert_{m,n} +\vert v\vert_{m,n}$ and $\vert u^*\pm v^*\vert_{n,m} = \vert u^*\vert_{n,m} +\vert v^*\vert_{n,m}.$
\end{proposition}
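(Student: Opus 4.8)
The plan is to peel the rectangular statement down to the self-adjoint (square) case by the symmetric $2\times 2$ dilation, and then to reduce that to the elementary situation of positive elements. For $u \in M_{m,n}(V)$ put $\wt{u} = \begin{bmatrix} 0_m & u \\ u^* & 0_n \end{bmatrix} \in M_{m+n}(V)_{sa}$, and likewise $\wt{v}$. By the very definition of orthogonality for general elements, $u \perp v$ means $\wt{u} \perp \wt{v}$ in $M_{m+n}(V)_{sa}$, which in turn means $\vert \wt{u}\vert_{m+n} \perp \vert \wt{v}\vert_{m+n}$. The dilation is linear, so $\wt{u \pm v} = \wt{u} \pm \wt{v}$, and Proposition \ref{91}(2) evaluates every absolute value in sight as a direct sum:
$$\vert \wt{u}\vert_{m+n} = \vert u^*\vert_{n,m} \oplus \vert u\vert_{m,n}, \qquad \vert \wt{u} \pm \wt{v}\vert_{m+n} = \vert u^* \pm v^*\vert_{n,m} \oplus \vert u \pm v\vert_{m,n}.$$
In this way both asserted identities (the one for $u$ and the one for $u^*$) will be read off simultaneously from a single identity about the self-adjoint elements $\wt{u}, \wt{v}$.

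The core is therefore the self-adjoint assertion: for $x, y \in M_N(V)_{sa}$ one has $x \perp y$ (equivalently $\vert x\vert_N \perp \vert y\vert_N$) if and only if $\vert x \pm y\vert_N = \vert x\vert_N + \vert y\vert_N$. Since each $M_N(V)$ is itself an absolutely ordered space (Definition \ref{152}(1)), this is a purely order-theoretic statement about absolutely ordered spaces, which I would prove in two stages. For positive $a, b \in M_N(V)^+$ it is immediate: $\vert a + b\vert_N = a + b$ because the absolute value fixes the cone, while $\vert a - b\vert_N = a + b$ is by definition the relation $a \perp b$; this also gives the converse at once. To lift from positive to self-adjoint elements I would take the orthogonal decompositions $x = x_+ - x_-$ and $y = y_+ - y_-$ with $x_\pm, y_\pm \in M_N(V)^+$, $\vert x\vert_N = x_+ + x_-$, $\vert y\vert_N = y_+ + y_-$, and observe that $\vert x\vert_N \perp \vert y\vert_N$ forces each of $x_+, x_-$ to be orthogonal to each of $y_+, y_-$; regrouping $x + y = (x_+ + y_+) - (x_- + y_-)$ and $x - y = (x_+ + y_-) - (x_- + y_+)$ into pairs of mutually orthogonal positive elements then yields $\vert x \pm y\vert_N = \vert x\vert_N + \vert y\vert_N$. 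The step I expect to be the main obstacle is exactly this regrouping, because it requires the additivity and heredity of $\perp$ on the positive cone (the band/disjointness behaviour of orthogonality in absolutely ordered spaces) to justify that sums of pairwise orthogonal positive elements remain orthogonal; this is where the foundational theory of Karn has to be invoked, and the converse self-adjoint direction rests on the same characterization.

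Granting the self-adjoint case, I would finish by specializing it to $x = \wt{u}$, $y = \wt{v}$. If $u \perp v$, then $\wt{u} \perp \wt{v}$, hence $\vert \wt{u} \pm \wt{v}\vert_{m+n} = \vert \wt{u}\vert_{m+n} + \vert \wt{v}\vert_{m+n}$; substituting the two direct-sum evaluations displayed above gives
$$\vert u^* \pm v^*\vert_{n,m} \oplus \vert u \pm v\vert_{m,n} = \bigl(\vert u^*\vert_{n,m} + \vert v^*\vert_{n,m}\bigr) \oplus \bigl(\vert u\vert_{m,n} + \vert v\vert_{m,n}\bigr),$$
and comparing the two diagonal corners, which are uniquely determined in a direct sum, produces both $\vert u \pm v\vert_{m,n} = \vert u\vert_{m,n} + \vert v\vert_{m,n}$ and $\vert u^* \pm v^*\vert_{n,m} = \vert u^*\vert_{n,m} + \vert v^*\vert_{n,m}$. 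Conversely, these two identities recombine, via the same direct-sum evaluations, into $\vert \wt{u} \pm \wt{v}\vert_{m+n} = \vert \wt{u}\vert_{m+n} + \vert \wt{v}\vert_{m+n}$ for both signs, so the self-adjoint case returns $\wt{u} \perp \wt{v}$, that is $u \perp v$; Remark \ref{92} offers the same equivalence repackaged in terms of $\vert u\vert_{m,n} \perp \vert v\vert_{m,n}$ and $\vert u^*\vert_{n,m} \perp \vert v^*\vert_{n,m}$.
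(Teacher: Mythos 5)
The paper does not prove this proposition at all: it is recalled verbatim from \cite{PI19} (Proposition 2.3(1)) as part of the preliminaries, so there is no in-paper argument to compare yours against. Judged on its own terms, your strategy is sound and is the natural one. The reduction of the rectangular statement to the self-adjoint case via the dilation $\widetilde{u}=\begin{bmatrix}0&u\\u^*&0\end{bmatrix}$ is exactly what the definitions invite ($u\perp v$ \emph{means} $\widetilde{u}\perp\widetilde{v}$), Proposition \ref{91}(2) does evaluate all the dilated absolute values as the direct sums you write, and reading off the two diagonal corners is legitimate. The forward self-adjoint direction is also complete as you describe it: heredity (\cite[Definition 3.4(4)]{K18}) gives $x_s\perp y_t$ for all four sign choices from $\vert x\vert\perp\vert y\vert$, additivity (\cite[Definition 3.4(5)]{K18}) makes $(x_++y_+)\perp(x_-+y_-)$ and $(x_++y_-)\perp(x_-+y_+)$, and then $\vert x\pm y\vert=\vert x\vert+\vert y\vert$ falls out of the definition of $\perp$ on the cone.

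The one place you have not actually given an argument is the converse of the self-adjoint case: ``rests on the same characterization'' is not a proof, and this direction needs one extra idea beyond what you wrote. From $\vert x+y\vert=\vert x\vert+\vert y\vert$ one computes $(x+y)_\pm=\tfrac12(\vert x+y\vert\pm(x+y))=x_\pm+y_\pm$, and the axiom that the canonical positive and negative parts of any element are orthogonal then gives $(x_++y_+)\perp(x_-+y_-)$; heredity extracts only the \emph{mixed} relations $x_+\perp y_-$ and $x_-\perp y_+$ from this. To get $x_+\perp y_+$ and $x_-\perp y_-$ you must run the same computation on $\vert x-y\vert=\vert x\vert+\vert y\vert$, whose parts are $x_++y_-$ and $x_-+y_+$. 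Only with all four relations in hand does additivity yield $\vert x\vert=x_++x_-\perp y_++y_-=\vert y\vert$. This explains, incidentally, why the statement genuinely needs both signs in $\vert u\pm v\vert$: one sign alone does not force orthogonality. With that paragraph added, your proof is complete and correct.
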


Next, we prove that orthogonality in absolutely matrix ordered spaces is invariant under scalar multiplication.

\begin{proposition}\label{90}
Let $V$ be an absolutely matrix ordered space and let $u,v\in M_{m,n}(V)$ such that $u\perp v.$ Then $\alpha u \perp \beta v$ for all $\alpha,\beta \in \mathbb{C}.$
\end{proposition}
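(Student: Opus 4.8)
The plan is to reduce the rectangular statement to one about positive elements and then to isolate two independent ingredients: the absolute homogeneity of $\vert\cdot\vert$, and the invariance of orthogonality of \emph{positive} elements under non-negative scaling. By Remark \ref{92}, $u\perp v$ is equivalent to $\vert u\vert_{m,n}\perp\vert v\vert_{m,n}$ in $M_n(V)^+$ together with $\vert u^*\vert_{n,m}\perp\vert v^*\vert_{n,m}$ in $M_m(V)^+$, and likewise $\alpha u\perp\beta v$ is equivalent to $\vert\alpha u\vert_{m,n}\perp\vert\beta v\vert_{m,n}$ and $\vert(\alpha u)^*\vert_{n,m}\perp\vert(\beta v)^*\vert_{n,m}$. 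Since $(\alpha u)^*=\bar\alpha\,u^*$ and $\vert\bar\alpha\vert=\vert\alpha\vert$, once I know that $\vert\lambda w\vert=\vert\lambda\vert\,\vert w\vert$ for every scalar $\lambda$, the whole problem collapses to showing that $p\perp q$ forces $cp\perp dq$ for $p,q\in M_k(V)^+$ and $c,d\ge 0$.

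First I would establish the absolute homogeneity $\vert\alpha w\vert_{m,n}=\vert\alpha\vert\,\vert w\vert_{m,n}$ for $\alpha\in\C$ and $w\in M_{m,n}(V)$. Writing $\alpha=\vert\alpha\vert e^{i\theta}$, the scalar matrix $e^{i\theta}I_m\in M_m$ is a unitary, hence an isometry, so Proposition \ref{91}(1) applied to the element $\vert\alpha\vert w$ gives $\vert\alpha w\vert_{m,n}=\vert e^{i\theta}(\vert\alpha\vert w)\vert_{m,n}=\vert\,\vert\alpha\vert w\vert_{m,n}$ and reduces the claim to a non-negative real factor $t=\vert\alpha\vert$. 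For real homogeneity I would pass through the self-adjoint level: by Definition \ref{152}(1) each $(M_{k}(V)_{sa},\vert\cdot\vert_k)$ is an absolutely ordered space, so $\vert t x\vert_k=\vert t\vert\,\vert x\vert_k$ for real $t$ and $x\in M_k(V)_{sa}$. Applying this to the self-adjoint block $\begin{bmatrix}0&w\\ w^*&0\end{bmatrix}$, whose absolute value is $\vert w^*\vert_{n,m}\oplus\vert w\vert_{m,n}$ by Proposition \ref{91}(2), and comparing with the evaluation of $\left\vert\begin{bmatrix}0&tw\\ (tw)^*&0\end{bmatrix}\right\vert_{m+n}$ (here $tw^*=(tw)^*$ as $t$ is real), the uniqueness of the block decomposition yields $\vert t w\vert_{m,n}=t\,\vert w\vert_{m,n}$ for $t\ge 0$. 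The same identity for $w=u^*$ handles the starred conditions.

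It then remains to prove the scaling lemma for positive elements: if $p,q\in M_k(V)^+$ with $p\perp q$, i.e. $\vert p-q\vert_k=p+q$, then $cp\perp dq$ for all $c,d\ge 0$. If $c=0$ or $d=0$ this is immediate, since $\vert s\vert_k=s$ for $s\in M_k(V)^+$. For $c,d>0$, equal scaling is trivial: homogeneity gives $\vert tp-tq\vert_k=t\vert p-q\vert_k=tp+tq$, so $tp\perp tq$ for every $t>0$. To reach unequal factors I would run a sandwich argument. Put $t=\max\{c,d\}$, so that $tp\perp tq$; since $0\le dq\le tq$, the downward monotonicity of orthogonality of positive elements yields $tp\perp dq$, and then, using the symmetry of $\perp$ (which follows from $\vert q-p\vert_k=\vert p-q\vert_k$) together with $0\le cp\le tp$, the same monotonicity gives $dq\perp cp$, that is $cp\perp dq$.

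The main obstacle is precisely this monotonicity step for unequal scalars: homogeneity only delivers equal scaling, and to descend from $tq$ to $dq$ (and from $tp$ to $cp$) I must know that orthogonality of positive elements is inherited by smaller positive elements, i.e. that $u\perp v$ and $0\le w\le v$ imply $u\perp w$. This is a structural feature of the absolutely ordered structure on each $M_k(V)_{sa}$ coming from Definition \ref{152}(1), and is the only place where I would lean on properties of $\vert\cdot\vert$ beyond those recorded above; once it is in hand, the chain $tp\perp tq\Rightarrow tp\perp dq\Rightarrow cp\perp dq$ closes the positive-scaling lemma, and the reduction of the first paragraph then proves $\alpha u\perp\beta v$.
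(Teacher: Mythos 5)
Your proposal is correct and follows essentially the same route as the paper: the phase of the scalar is absorbed by the isometry invariance of $\vert\cdot\vert$ (Proposition \ref{91}(1)), and the modulus is handled by combining equal positive scaling of orthogonal positive elements with the downward monotonicity of $\perp$, both of which are axioms of the absolutely ordered structure on each self-adjoint level (\cite[Definition 3.4(3),(4)]{K18}) — exactly the two facts the paper invokes. The only difference is organizational: you reduce via Remark \ref{92} and isolate a standalone homogeneity lemma $\vert\alpha w\vert_{m,n}=\vert\alpha\vert\,\vert w\vert_{m,n}$, whereas the paper works directly with the defining off-diagonal block matrices; both packagings are sound.
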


\begin{proof}
Since $u\perp v,$ we have $\left \vert \begin{bmatrix} 0 & u \\ u^* & 0 \end{bmatrix}\right \vert_{m+n} \perp \left \vert\begin{bmatrix} 0 & v \\ v^* & 0 \end{bmatrix}\right \vert_{m+n}.$ Let $\alpha,\beta \in \mathbb{C}.$ If $\alpha=0$ or $\beta=0,$ then  $\alpha u \perp \beta v.$ Now, assume $\alpha \neq 0,\beta \neq 0.$ Thus $\alpha=\vert \alpha\vert e^{i\zeta_1}$ and $\beta=\vert \beta\vert e^{i\zeta_2}$ for some $\zeta_1,\zeta_2 \in \mathbb{R}.$ Then $\begin{bmatrix} e^{i\zeta_1} I_m & 0 \\ 0 & e^{-i\zeta_1} I_n \end{bmatrix}$ and $\begin{bmatrix} e^{i\zeta_2} I_m & 0 \\ 0 & e^{-i\zeta_2} I_n \end{bmatrix}$ are unitary matrices such that $\begin{bmatrix} e^{i\zeta_1} I_m & 0 \\ 0 & e^{-i\zeta_1} I_n \end{bmatrix}\begin{bmatrix} 0 & u \\ u^* & 0 \end{bmatrix}=\begin{bmatrix} 0 & e^{i\zeta_1} u\\ (e^{i\zeta_1}u)^* & 0\end{bmatrix}$ and $\begin{bmatrix} e^{i\zeta_2} I_m & 0 \\ 0 & e^{-i\zeta_2} I_n \end{bmatrix}\begin{bmatrix} 0 & v \\ v^* & 0 \end{bmatrix}=\begin{bmatrix} 0 &  e^{i\zeta_2}v\\ (e^{i\zeta_2} v)^* & 0\end{bmatrix}.$ By Proposition \ref{91}(1), we get that 
$\left \vert \begin{bmatrix} 0 & e^{i\zeta_1} u\\ (e^{i\zeta_1}u)^* & 0\end{bmatrix} \right \vert_{m+n}=\left \vert \begin{bmatrix} 0 & u \\ u^* & 0 \end{bmatrix} \right \vert_{m+n} \perp \left \vert \begin{bmatrix} 0 & v \\ v^* & 0 \end{bmatrix} \right \vert_{m+n} = \left \vert \begin{bmatrix} 0 &  e^{i\zeta_2}v\\ (e^{i\zeta_2} v)^* & 0 \end{bmatrix} \right \vert_{m+n}.$ 

Next, either $\vert \alpha\vert \leq \vert\beta\vert$ or $\vert \beta\vert \leq \vert\alpha\vert.$ Assume that $\vert \alpha\vert \leq \vert\beta\vert.$ Then, by \cite[Definition 3.4(3)]{K18}, we have that $\vert \beta \vert \left \vert \begin{bmatrix} 0 & e^{i\zeta_1} u\\ (e^{i\zeta_1}u)^* & 0\end{bmatrix} \right \vert_{m+n} \perp \vert \beta\vert \left \vert \begin{bmatrix} 0 &  e^{i\zeta_2}v\\ (e^{i\zeta_2} v)^* & 0 \end{bmatrix} \right \vert_{m+n}.$ Since $\vert \alpha \vert \left \vert \begin{bmatrix} 0 & e^{i\zeta_1} u\\ (e^{i\zeta_1}u)^* & 0\end{bmatrix} \right \vert_{m+n} \leq \vert \beta \vert \left \vert \begin{bmatrix} 0 & e^{i\zeta_1} u\\ (e^{i\zeta_1}u)^* & 0\end{bmatrix} \right \vert_{m+n},$ again by \cite[Definition 3.4(4)]{K18}, we conclude that $\left \vert \begin{bmatrix} 0 & \alpha u\\ (\alpha u)^* & 0\end{bmatrix} \right \vert_{m+n}= \vert \alpha\vert \left \vert \begin{bmatrix} 0 & e^{i\zeta_1} u\\ (e^{i\zeta_1}u)^* & 0\end{bmatrix} \right \vert_{m+n} \perp \vert \beta\vert \left \vert \begin{bmatrix} 0 &  e^{i\zeta_2}v\\ (e^{i\zeta_2} v)^* & 0 \end{bmatrix} \right \vert_{m+n}=\left \vert \begin{bmatrix} 0 & \beta v\\ (\beta v)^* & 0\end{bmatrix} \right \vert_{m+n}$ so that $\alpha u \perp \beta v.$ Similarly, the case $\vert \beta\vert \leq \vert\alpha\vert$ can be taken care.
\end{proof}

Finally, we bind up this subsection by recalling absolute matrix order unit spaces.

\begin{definition}[\cite{K19}, Definition 4.3]
Let $(V, \lbrace M_n(V)^+ \rbrace, e)$ be a matrix order unit space such that 
\begin{enumerate}
\item[(a)] $\left(V, \lbrace M_n(V)^+ \rbrace, \lbrace \vert \cdot \vert_{m,n} \rbrace \right)$ is an absolutely matrix ordered space; and
\item[(b)]$\perp = \perp_{\infty}^a$ on $M_n(V)^+$ for all $n \in \mathbb{N}.$ 
\end{enumerate}
Then $(V, \lbrace M_n(V)^+ \rbrace, \lbrace \vert\cdot\vert_{m,n} \rbrace, e)$ is called an \emph{absolute matrix order unit space}. 
\end{definition}

\begin{example}[\cite{K19}, Example 4.4]
For any $C^*$-algebra $A,$ we have $\perp = \perp_\infty^a$ on $M_n(A)^+$ for all $n \in \mathbb{N}.$ Therefore, every unital $C^*$-algebra is an absolute matrix order unit space.
\end{example}

\subsection{Order Projections in Absolute Matrix Order Unit Spaces}

The notion of order projections has been introduced and studied in absolute order unit spaces by Karn in \cite{K18}. It generalizes the notion of projections in $C^*$-algebras. In the collaboration with Karn, author has generalized the notion of order projections in absolute order unit spaces to order projections in absolute matrix order unit spaces \cite{PI19}. In \cite{PI19}, we have also introduced and studied the notion of partial isometries in absolute matrix order unit spaces which generalizes the notion of partial isometries in $C^*$-algebras. For details, see \cite{K18, PI19}.

In this subsection, we recall the notions of order projections and partial isometries in absolute matrix order unit spaces.

\begin{definition}[\cite{PI19}, Definition 3.1]
Let $(V,e)$ be an absolute matrix order unit space and let $v \in M_n(V)$ for some $n\in \mathbb{N}.$ Then $v$ is said to be \emph{order projection}, if $v^* = v$ and $\vert 2 v - e^n \vert_n = e^n.$ Now, let $v \in M_{m,n}(V)$ for some $m, n \in \mathbb{N}.$ Then $v$ is said to be \emph{partial isometry}, if $\vert v \vert_{m,n}$ and $\vert v^* \vert_{n,m}$ are order projections.
\end{definition}

The set of all the order projections in $M_n(V)$ will be denoted by $\mathcal{OP}_n(V)$ and the set of all partial isometries in $M_{m,n}(V)$ will be denoted by $\mathcal{PI}_{m,n}(V).$ For $m = n$, we write $\mathcal{PI}_{m,n}(V) = \mathcal{PI}_n(V)$. For $n = 1$, we shall write $\mathcal{PI}(V)$ for $\mathcal{PI}_1(V)$ and $\mathcal{OP}(V)$ for $\mathcal{OP}_1(V)$. 

First, let's recall the following property of order projections:

\begin{proposition}[\cite{PI19}, Proposition 3.2] \label{3}
Let $V$ be an absolute matrix order unit space and let $m,n\in \mathbb{N}.$ Then $p\in \mathcal{OP}_m(V), q\in \mathcal{OP}_n(V)$ if and only if  $p\oplus q\in \mathcal{OP}_{m+n}(V)$
\end{proposition}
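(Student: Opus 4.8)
The plan is to prove Proposition \ref{3} by reducing the order projection condition for $p \oplus q$ to the separate conditions for $p$ and $q$, exploiting the behavior of the absolute value and the order unit under the direct sum operation $\oplus$. Recall that $p \in \mathcal{OP}_{m+n}(V)$ means $p = p^*$ and $\vert 2p - e^{m+n}\vert_{m+n} = e^{m+n}$, and I would work entirely at the level of these two defining conditions.

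First I would handle the self-adjointness. Since $(p \oplus q)^* = p^* \oplus q^*$, we have $(p\oplus q)^* = p \oplus q$ if and only if $p^* = p$ and $q^* = q$; this equivalence is immediate from the entrywise structure of $\oplus$ and requires no serious work. The substantive part is the absolute-value condition. Here I would compute
$$2(p \oplus q) - e^{m+n} = (2p - e^m) \oplus (2q - e^n),$$
using that $e^{m+n} = e^m \oplus e^n$ by Proposition \ref{81}(2) together with the definition of $e^{m+n}$. Then by the direct-sum compatibility of the absolute value, namely condition 3 of Definition \ref{152} applied with $v = 2p - e^m \in M_m(V)_{sa}$ and $w = 2q - e^n \in M_n(V)_{sa}$, I obtain
$$\vert 2(p\oplus q) - e^{m+n}\vert_{m+n} = \vert 2p - e^m\vert_m \oplus \vert 2q - e^n\vert_n.$$

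With this identity in hand, the proof becomes a matter of comparing $\vert 2p - e^m\vert_m \oplus \vert 2q - e^n\vert_n$ with $e^m \oplus e^n$. For the forward direction, if $p$ and $q$ are order projections then both summands equal the respective order units, so their direct sum equals $e^m \oplus e^n = e^{m+n}$, giving $p \oplus q \in \mathcal{OP}_{m+n}(V)$. For the converse, I would argue that an equality $a \oplus b = e^m \oplus e^n$ in $M_{m+n}(V)$ forces $a = e^m$ and $b = e^n$ by reading off the diagonal blocks; more carefully, I expect to recover each summand by compressing with the appropriate isometries (the block inclusions $\alpha_1 \in M_{m+n,m}$ and $\alpha_2 \in M_{m+n,n}$ whose columns pick out the two coordinate blocks) and invoking Proposition \ref{91}(1) or the positivity/order structure to conclude $\vert 2p - e^m\vert_m = e^m$ and $\vert 2q - e^n\vert_n = e^n$.

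The main obstacle I anticipate is the converse direction: justifying rigorously that equality of two direct sums in $M_{m+n}(V)$ can be split into equality of the corresponding blocks. In a C$^*$-algebra this is trivial from the matrix entries, but in the abstract absolute matrix order unit space one does not have direct access to entries of an element of $M_{m+n}(V)$ as bona fide operators. I would therefore lean on the structural axioms—either extracting blocks via compression by coordinate isometries (Proposition \ref{91}(1)) or using the uniqueness of the absolute value together with properness of the cone from Proposition \ref{81}(1)(a)—to legitimize the block decomposition. Everything else is routine manipulation of $\oplus$ and the absolute value, so the entire proof should be short once this block-separation step is secured.
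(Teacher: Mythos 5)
Your proof is correct. The paper itself gives no proof of this statement---it is recalled verbatim from \cite{PI19} (Proposition 3.2)---but your argument via $2(p\oplus q)-e^{m+n}=(2p-e^m)\oplus(2q-e^n)$ and axiom 3 of Definition \ref{152} is exactly the natural one. Your one anticipated obstacle, splitting the equality $\vert 2p-e^m\vert_m\oplus\vert 2q-e^n\vert_n=e^m\oplus e^n$ into its blocks, is in fact a non-issue: $M_{m+n}(V)$ is by definition the space of $(m+n)\times(m+n)$ matrices with entries in $V$, so two block-diagonal matrices are equal precisely when their entries, hence their diagonal blocks, coincide; no compression by isometries or appeal to properness of the cone is needed.
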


The following result describes how othogonality helps partial isometries for their sum to be again a partial isometry. 

\begin{proposition}[\cite{PI19}, Corollary 3.1]\label{89}
	Let $V$ be an absolute matrix order unit space and let $v_1, \dots, v_k \in M_{m,n}(V)$ be mutually orthogonal vectors for some $k,m,n \in \mathbb{N}$. Then $v_1, \dots, v_k \in \mathcal{PI}_{m,n}(V)$ if, and only if, $\displaystyle\sum_{i=1}^{k}v_i \in \mathcal{PI}_{m,n}(V).$ 
\end{proposition}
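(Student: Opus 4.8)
The plan is to reduce this statement about general rectangular vectors to a claim about mutually orthogonal \emph{positive} elements, and then to settle that claim by induction on $k$.

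First I would pass from the $v_i$ to their absolute values. Because $v_1,\dots,v_k$ are mutually orthogonal, iterating Proposition~\ref{88} gives
$$\left\vert \sum_{i=1}^{k} v_i\right\vert_{m,n}=\sum_{i=1}^{k}\vert v_i\vert_{m,n}\qquad\text{and}\qquad\left\vert\left(\sum_{i=1}^{k} v_i\right)^{*}\right\vert_{n,m}=\sum_{i=1}^{k}\vert v_i^{*}\vert_{n,m}.$$
To iterate the two-term identity I need each partial sum $v_1+\cdots+v_j$ to stay orthogonal to $v_{j+1}$, which is the standard fact that an orthocomplement is closed under orthogonal sums (derivable from Proposition~\ref{88} and Remark~\ref{92}, or citable from \cite{PI19}). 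By Remark~\ref{92}, the mutual orthogonality of the $v_i$ also forces $\{\vert v_i\vert_{m,n}\}$ to be mutually orthogonal in $M_n(V)^{+}$ and $\{\vert v_i^{*}\vert_{n,m}\}$ to be mutually orthogonal in $M_m(V)^{+}$. Since $v$ is a partial isometry precisely when both $\vert v\vert_{m,n}$ and $\vert v^{*}\vert_{n,m}$ are order projections, and the two displayed sums are exactly $\vert\sum_i v_i\vert_{m,n}$ and $\vert(\sum_i v_i)^{*}\vert_{n,m}$, both directions of the proposition follow once I prove the \emph{Key Lemma}: if $p_1,\dots,p_k\in M_n(V)^{+}$ are mutually orthogonal, then each $p_i$ is an order projection if and only if $\sum_{i=1}^{k}p_i$ is an order projection.

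By a routine induction it suffices to prove the Key Lemma for $k=2$; let $p\perp q$ in $M_n(V)^{+}$, and recall that a self-adjoint element $r$ with $0\le r\le e^n$ is an order projection exactly when $r\perp(e^n-r)$, by the definition of $\perp$ for positive elements. For the forward direction assume $p$ and $q$ are order projections. The step I expect to be the main obstacle is to show $p+q\le e^n$, so that $e^n-p-q\ge 0$; I would extract this from the defining condition $\perp=\perp_\infty^{a}$ of an absolute matrix order unit space, which turns $p\perp q$ into $p\perp_\infty q$, whence $\Vert p+q\Vert_n=\max\{\Vert p\Vert_n,\Vert q\Vert_n\}\le 1$, and a positive element of norm at most one is dominated by the order unit $e^n$. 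Granting $e^n-p-q\ge 0$, I note $e^n-p-q\le e^n-p$ and $e^n-p-q\le e^n-q$; since $p\perp(e^n-p)$ and $q\perp(e^n-q)$ (as $p,q$ are order projections), the hereditary nature of $\perp=\perp_\infty^{a}$ yields $p\perp(e^n-p-q)$ and $q\perp(e^n-p-q)$, and closure of orthocomplements under orthogonal sums (here using $p\perp q$) gives $(p+q)\perp(e^n-p-q)$. Hence $\vert 2(p+q)-e^n\vert_n=\vert(p+q)-(e^n-p-q)\vert_n=(p+q)+(e^n-p-q)=e^n$, so $p+q$ is an order projection.

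For the converse, assume $P:=p+q$ is an order projection. From $0\le p\le P$ and $0\le q\le P$ together with $P\perp(e^n-P)$, the hereditary property gives $p\perp(e^n-P)$ and $q\perp(e^n-P)$; combining $q\perp(e^n-P)$, $p\perp q$ and $p\perp(e^n-P)$ with closure under orthogonal sums yields $p\perp\big(q+(e^n-P)\big)$, and since $q+(e^n-P)=e^n-p$ this says $p\perp(e^n-p)$, i.e.\ $\vert 2p-e^n\vert_n=e^n$. Thus $p$ is an order projection, and $q$ likewise by symmetry. Finally, applying the Key Lemma once to $\{\vert v_i\vert_{m,n}\}$ and once to $\{\vert v_i^{*}\vert_{n,m}\}$ and feeding the result back through the two displayed identities gives the desired equivalence for $\sum_{i=1}^{k}v_i$.
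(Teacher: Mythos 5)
Your argument is correct. Note that the paper itself gives no proof of this statement; it is recalled verbatim from \cite{PI19} (Corollary 3.1), so there is nothing to compare against line by line. Your route --- reducing via Proposition \ref{88} and Remark \ref{92} to the claim that a sum of mutually orthogonal positive elements is an order projection iff each summand is, and settling that claim for $k=2$ using the heredity of $\perp$ (Definition 3.4(4) of \cite{K18}, equivalently the hereditary nature of $\perp_\infty^a$), closure of $\perp$ under orthogonal sums (Definition 3.4(5)), and $\perp=\perp_\infty^a$ to get $p+q\le e^n$ --- is exactly the toolkit the paper deploys in its own Proposition \ref{76}, and is the standard derivation of this corollary in \cite{PI19}. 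No gaps.
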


Next, we recall a result that we need in the section \ref{2}. To prove this result, we need to recall the following result:

\begin{proposition}[\cite{PI19}, Proposition 3.3]\label{4}
Let $V$ be an absolute matrix order unit space and let $\alpha \in M_{m,n}$ such that $\alpha^*\alpha=I_n.$ Then $\alpha u\alpha^* \perp \alpha v\alpha^*$ for $u,v\in M_n(V)^+$ with $u\perp v.$ In particular, if $p\in \mathcal{OP}_n(V),$ then $\alpha p\alpha^* \in \mathcal{OP}_m(V).$
\end{proposition}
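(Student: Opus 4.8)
The plan is to dilate the isometry $\alpha$ to a unitary so that the two-sided compression $x \mapsto \alpha x \alpha^*$ becomes a genuine unitary conjugation, to which Lemma \ref{95} applies verbatim. Since $\alpha \in M_{m,n}$ satisfies $\alpha^*\alpha = I_n$, its columns are orthonormal, so $m \geq n$ and $\alpha$ extends to a unitary $U \in M_m$; writing $\iota = \begin{bmatrix} I_n \\ 0 \end{bmatrix} \in M_{m,n}$ for the canonical inclusion, we have $\alpha = U\iota$ and $\alpha^* = \iota^* U^*$, and for every $x \in M_n(V)$ the element $\iota x \iota^*$ is precisely $x \oplus 0_{m-n}$.

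The key identity I would first record is that for every self-adjoint $w \in M_n(V)_{sa}$,
$$\vert \alpha w \alpha^* \vert_m = \alpha \vert w \vert_n \alpha^*.$$
Indeed, $\alpha w \alpha^* = U(w \oplus 0_{m-n})U^*$, so applying Lemma \ref{95} to the unitary $U^*$ and then condition $3$ of Definition \ref{152} gives $\vert \alpha w \alpha^*\vert_m = U\vert w \oplus 0_{m-n}\vert_m U^* = U(\vert w\vert_n \oplus 0_{m-n})U^* = \alpha \vert w\vert_n \alpha^*$, where we used $\vert 0_{m-n}\vert = 0$ and $\iota \vert w\vert_n \iota^* = \vert w\vert_n \oplus 0_{m-n}$. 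For the first assertion, note that $\alpha u\alpha^*, \alpha v\alpha^* \in M_m(V)^+$ by condition (b) of the matrix ordered space definition (take $a = \alpha^*$), so $u \perp v$, i.e. $\vert u - v\vert_n = u + v$, yields via the identity with $w = u-v$ that
$$\vert \alpha u\alpha^* - \alpha v\alpha^*\vert_m = \alpha\vert u-v\vert_n\alpha^* = \alpha(u+v)\alpha^* = \alpha u\alpha^* + \alpha v\alpha^*,$$
which is exactly $\alpha u\alpha^* \perp \alpha v\alpha^*$.

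For the ``in particular'' statement, let $p \in \mathcal{OP}_n(V)$, so $p = p^*$ and $\vert 2p - e^n\vert_n = e^n$; then $\alpha p\alpha^*$ is plainly self-adjoint. The crucial extra observation is that a scalar unitary conjugation fixes the order unit, $U e^m U^* = e^m$, which follows from a direct entrywise computation since the entries of $e^m$ are scalar multiples of $e$ and $UU^* = I_m$. Writing $e^m = e^n \oplus e^{m-n}$, I would then compute $2\alpha p\alpha^* - e^m = U\big((2p - e^n) \oplus (-e^{m-n})\big)U^*$, whence by Lemma \ref{95}, condition $3$, and $\vert -e^{m-n}\vert_{m-n} = e^{m-n}$,
$$\vert 2\alpha p\alpha^* - e^m\vert_m = U\big(\vert 2p - e^n\vert_n \oplus e^{m-n}\big)U^* = U(e^n \oplus e^{m-n})U^* = U e^m U^* = e^m,$$
so that $\alpha p\alpha^* \in \mathcal{OP}_m(V)$.

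The main obstacle is establishing the key identity $\vert \alpha w \alpha^*\vert_m = \alpha\vert w\vert_n\alpha^*$, which upgrades Lemma \ref{95} from unitaries to isometric compressions; once the dilation $\alpha = U\iota$ is in place this reduces to the unitary invariance and the direct-sum additivity of the absolute value. The only additional ingredient needed for the order-projection part is the invariance $U e^m U^* = e^m$ of the order unit under scalar unitary conjugation, which must be checked but is routine.
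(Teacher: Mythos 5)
Your argument is correct. Note that the paper itself states this proposition without proof, merely citing \cite{PI19}, so there is no in-text proof to compare against; but your dilation device $\alpha = U\iota$ with $U$ unitary, combined with Lemma \ref{95} and the direct-sum axiom (Definition \ref{152}(3)), is exactly the technique the paper deploys for the neighbouring Corollary \ref{5}, and all the individual steps (positivity of $\alpha u\alpha^*$ via axiom (b), the identity $\vert\alpha w\alpha^*\vert_m = \alpha\vert w\vert_n\alpha^*$, the invariance $Ue^mU^* = e^m$, and $\vert -e^{m-n}\vert = e^{m-n}$) check out.
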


\begin{corollary}\label{5}
Let $V$ be an absolue matrix order unit space. Also, let $v\in \mathcal{PI}_n(V)$ and $\alpha \in M_{m,n}$ such that $\alpha^*\alpha=I_n.$ Then $\alpha p\alpha^* \in \mathcal{PI}_m(V).$
\end{corollary}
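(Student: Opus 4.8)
The plan is to reduce everything to the behaviour of the absolute value under conjugation by a \emph{square} unitary, as governed by Lemma \ref{95}, together with the fact from Proposition \ref{4} that conjugation by an isometry preserves order projections. Throughout I read the conclusion as $\alpha v\alpha^* \in \mathcal{PI}_m(V)$, the displayed $\alpha p\alpha^*$ being a typo for $\alpha v\alpha^*$. By the definition of a partial isometry it suffices to show that the two positive elements $|\alpha v\alpha^*|_m$ and $|(\alpha v\alpha^*)^*|_m = |\alpha v^*\alpha^*|_m$ are order projections in $M_m(V)$.

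The key computation, and the main obstacle, is to identify $|\alpha v\alpha^*|_m$. The difficulty is that $\alpha \in M_{m,n}$ is a \emph{rectangular} isometry, whereas Lemma \ref{95} only describes $|\cdot|$ under conjugation by a square unitary. To bridge this, I would dilate $\alpha$ to a unitary: since $\alpha^*\alpha = I_n$ (so $m \ge n$), there is a unitary $U \in M_m$ with $\alpha = U\iota$, where $\iota = \begin{bmatrix} I_n \\ 0 \end{bmatrix} \in M_{m,n}$ is the canonical isometric embedding. Then
\[
\alpha v\alpha^* = U\iota\, v\, \iota^* U^* = U\,(v \oplus 0_{m-n})\,U^*.
\]
Applying Lemma \ref{95} to the unitary $U^* \in M_m$ gives $|\alpha v\alpha^*|_m = U\,|v\oplus 0_{m-n}|_m\,U^*$, while the direct-sum axiom (Definition \ref{152}(3)) together with $|0|=0$ yields $|v\oplus 0_{m-n}|_m = |v|_n \oplus 0_{m-n} = \iota |v|_n \iota^*$. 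Combining these, I obtain the clean formula
\[
|\alpha v\alpha^*|_m = U\,(|v|_n \oplus 0_{m-n})\,U^* = \alpha\, |v|_n\, \alpha^*.
\]

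With this formula in hand the rest is immediate. Since $v \in \mathcal{PI}_n(V)$, the element $|v|_n$ is an order projection in $M_n(V)$; as $\alpha^*\alpha = I_n$, Proposition \ref{4} gives $\alpha |v|_n \alpha^* \in \mathcal{OP}_m(V)$, so $|\alpha v\alpha^*|_m \in \mathcal{OP}_m(V)$. Running the identical argument with $v$ replaced by $v^*$ (whose absolute value $|v^*|_n$ is likewise an order projection because $v$ is a partial isometry) shows $|(\alpha v\alpha^*)^*|_m = |\alpha v^*\alpha^*|_m = \alpha |v^*|_n \alpha^* \in \mathcal{OP}_m(V)$. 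Hence both relevant absolute values are order projections, and therefore $\alpha v\alpha^* \in \mathcal{PI}_m(V)$, as desired.
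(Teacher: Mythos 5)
Your argument is correct and follows essentially the same route as the paper's proof: factor the rectangular isometry as $\alpha = U\iota$ with $U \in M_m$ unitary, compute $\alpha v\alpha^* = U(v\oplus 0_{m-n})U^*$, and combine Lemma \ref{95} with the direct-sum axiom to identify $|\alpha v\alpha^*|_m$, finishing with Proposition \ref{4}. The only cosmetic difference is that you collapse the formula to $\alpha|v|_n\alpha^*$ and apply Proposition \ref{4} directly to the rectangular isometry $\alpha$, whereas the paper first invokes Proposition \ref{3} to see that $|v|_n\oplus 0_{m-n}$ is an order projection and then conjugates by the square unitary $\delta$; both are valid.
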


\begin{proof}
Since $\alpha^* \alpha=I_n,$ we have $m\geq n$ and $\alpha=\delta \begin{bmatrix} I_n \\ 0_{m-n,n}\end{bmatrix}$ for some unitary matrix $\delta$. Then $\alpha v\alpha^*=\delta (v\oplus 0_{m-n})\delta^*$ and $(\alpha v\alpha^*)^*=\alpha v^*\alpha^*=\delta (v^*\oplus 0_{m-n})\delta^*$  and by Lemma \ref{95}, we get that $\vert \alpha v\alpha^* \vert_n=\delta (\vert v\vert_n \oplus 0_{m-n})\delta^*$ and $\vert (\alpha v\alpha^*) \vert_n=\delta (\vert v^*\vert_n \oplus 0_{m-n})\delta^*.$ Next, we have $\vert v\vert_n$ and $\vert v^*\vert_n \in \mathcal{OP}_n(V),$ thus by Proposition \ref{3}, we have $\vert v^*\vert_n \oplus 0_{m-n}$ and $\vert v^*\vert_n \oplus 0_{m-n} \in \mathcal{OP}_m(V).$ Finally, by Proposition \ref{4}, we get that $\alpha p\alpha^* \in \mathcal{PI}_m(V).$
\end{proof}

In \cite{KO21}, Karn and the author have described the direct limit of absolute matrix spaces. We denote direct limit of an absolute matrix order unit space $V$ by $M_\infty(V).$ The direct limit of an absolute matrix order unit space $V$ is identified with $\displaystyle \bigcup_{n=1}^\infty M_n(V)$ (see \cite[Theorems 3.13 and 3.14]{KO21}). Thus, we write: $M_\infty(V) = \displaystyle \bigcup_{n=1}^\infty M_n(V)$. Further, under this identification, the corresponding set of projections is identified with $\mathcal{OP}_\infty(V) = \displaystyle \bigcup_{n=1}^\infty \mathcal{OP}_n(V)$. For more information, we refer to \cite{KO21}.

In the next definition, a relation is defined on the set of all the order projections.

\begin{definition}[\cite{PI19}, Definition 4.1]
Let $V$ be an absolute matrix order unit space. Then we define a relation $\sim$ on $\mathcal{OP}_\infty(V)$ by the following way: 

Given $p\in \mathcal{OP}_m(V)$ and $q\in \mathcal{OP}_n(V)$, we say that $p$ is partial isometric equivalent to $q$, (we write, $p\sim q$), if there exists $v\in \mathcal{PI}_{m,n}(V)$ such that $p=\vert v^*\vert_{n,m}$ and $q=\vert v\vert_{m,n}$. 
\end{definition}
We recall the following properties of this relation.
\begin{proposition} [\cite{PI19}, Propositions 4.1 and 4.2] \label{15}
	Let $V$ be an absolute matrix order unit space and let $p,q,r,p',q'\in \mathcal{OP}_\infty(V)$. Then 
	\begin{enumerate}
		\item[(1)] If $m, n \in \mathbb{N}$ and let $p\in\mathcal{OP}_m(V)$, then $p\sim p\oplus 0_n$ and $p \sim 0_n \oplus p$; 
		\item[(2)] If $p \sim q$ and $p' \sim q'$ with $p \perp p'$ and $q \perp q'$, then $p + p' \sim q + q'$;
		\item[(3)] If $p\sim p'$ and $q\sim q',$ then $p\oplus q\sim p'\oplus q';$
		\item[(4)] $p\oplus q\sim q\oplus p;$
		\item[(5)] If $p,q\in \mathcal{OP}_n(V)$ for some $n\in \mathbb{N}$ such that $p\perp q,$ then $p+q\sim p\oplus q;$
		\item[(6)] $(p\oplus q)\oplus r=p\oplus (q\oplus r);$ 
		\item[(7)] $\sim$ is an equivalence relation on $\mathcal{OP}_\infty(V)$, if the following condition holds:
		\textbf{(T):} If $u \in \mathcal{PI}_{m,n}(V)$ and $v \in \mathcal{PI}_{l,n}(V)$ with $\vert u \vert_{m,n} = \vert v \vert_{l,n}$, then there exists a $w \in \mathcal{PI}_{m,l}(V)$ such that $\vert w^* \vert_{l,m} = \vert u^* \vert_{n,m}$ and $\vert w \vert_{m,l} = \vert v^* \vert_{n,l}$.
	\end{enumerate} 
\end{proposition}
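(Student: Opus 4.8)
The plan is to establish each item by producing, for the asserted equivalence, an explicit partial isometry realizing it and then reading off its two absolute values from the structural identities recalled above (Proposition \ref{91}, Proposition \ref{88}, Definition \ref{152}(3), and Lemma \ref{95}). Throughout I would use the basic fact that every order projection $p \in \mathcal{OP}_m(V)$ is itself a partial isometry with $\vert p\vert_m = \vert p^*\vert_m = p$, so that $p \sim p$; this already disposes of reflexivity in (7).

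For (1) I would take $v = [\,p \ \ 0\,] \in M_{m,m+n}(V)$: Proposition \ref{91}(5) gives $\vert v\vert_{m,m+n} = p \oplus 0_n$ while Proposition \ref{91}(4) gives $\vert v^*\vert_{m+n,m} = p$, and since $p \oplus 0_n$ is an order projection by Proposition \ref{3}, $v$ is a partial isometry witnessing $p \sim p \oplus 0_n$; the element $[\,0 \ \ p\,]$ handles $p \sim 0_n \oplus p$. For (2), the hypotheses $p \perp p'$ and $q \perp q'$ together with Remark \ref{92} show that the implementing partial isometries $v,v'$ (with $\vert v^*\vert = p$, $\vert v\vert = q$, $\vert v'^*\vert = p'$, $\vert v'\vert = q'$) satisfy $v \perp v'$; Proposition \ref{89} then makes $v + v'$ a partial isometry and Proposition \ref{88} gives $\vert v+v'\vert = q+q'$ and $\vert(v+v')^*\vert = p+p'$. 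For (3), I would use $v \oplus w$, where $v,w$ implement $p \sim p'$ and $q \sim q'$; Definition \ref{152}(3) yields $\vert v \oplus w\vert = \vert v\vert \oplus \vert w\vert$ and likewise for the adjoint, and Proposition \ref{3} guarantees the resulting direct sums are order projections. For (4), letting $\sigma \in M_{m+n}$ be the permutation unitary swapping the two diagonal blocks, I would set $v = (p\oplus q)\sigma^* = \sigma^*(q\oplus p)$; two applications of Proposition \ref{91}(1) (with the isometries $\sigma$ and $\sigma^*$) give $\vert v^*\vert_{m+n} = p \oplus q$ and $\vert v\vert_{m+n} = q \oplus p$.

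Item (6) is a genuine equality, being merely the associativity of block-diagonal juxtaposition, so there is nothing to prove. Item (5) I would deduce from (1) and (2): by (1) we have $p \sim p \oplus 0_n$ and $q \sim 0_n \oplus q$, and the single computation needed is $(p\oplus 0_n) \perp (0_n \oplus q)$, which follows since $\big\vert (p\oplus 0_n) - (0_n\oplus q)\big\vert_{2n} = \vert p \oplus (-q)\vert_{2n} = \vert p\vert_n \oplus \vert q\vert_n = p\oplus q = (p\oplus 0_n)+(0_n\oplus q)$ by Definition \ref{152}(3). With $p \perp q$ given, part (2) then yields $p+q \sim (p\oplus 0_n)+(0_n\oplus q) = p\oplus q$.

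Finally (7): reflexivity is noted above, and symmetry is immediate, because if $v$ implements $p \sim q$ then $v^*$ implements $q \sim p$ (as $\vert(v^*)^*\vert = \vert v\vert = q$ and $\vert v^*\vert = p$). The main obstacle is transitivity, and this is exactly where condition (T) enters. Given $p \sim q$ via $u \in \mathcal{PI}_{m,n}(V)$ (so $\vert u^*\vert_{n,m} = p$, $\vert u\vert_{m,n} = q$) and $q \sim r$ via $v \in \mathcal{PI}_{n,l}(V)$ (so $\vert v^*\vert_{l,n} = q$, $\vert v\vert_{n,l} = r$), I would apply (T) to the pair $u \in \mathcal{PI}_{m,n}(V)$ and $v^* \in \mathcal{PI}_{l,n}(V)$, which share the common value $\vert u\vert_{m,n} = q = \vert v^*\vert_{l,n}$. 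Condition (T) then produces $w \in \mathcal{PI}_{m,l}(V)$ with $\vert w^*\vert_{l,m} = \vert u^*\vert_{n,m} = p$ and $\vert w\vert_{m,l} = \vert(v^*)^*\vert_{n,l} = \vert v\vert_{n,l} = r$, i.e.\ $p \sim r$. The hard part is concentrated entirely in (T): since $V$ carries no product, there is no honest composition $uv$ of partial isometries available, and (T) is precisely the substitute that lets one chain the two intermediate partial isometries through their common absolute value $q$.
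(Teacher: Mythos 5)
The paper does not prove this proposition at all --- it is recalled verbatim from \cite{PI19} (Propositions 4.1 and 4.2) --- so there is no in-paper argument to compare against; your reconstruction is correct, and every witness partial isometry you exhibit does what you claim. The only step not literally covered by the lemmas you cite is $\bigl\vert\begin{bmatrix}0 & p\end{bmatrix}\bigr\vert_{m,n+m}=0_n\oplus p$ (Proposition \ref{91}(4),(5) only treat zero blocks appended on the right/bottom), but this follows by writing $\begin{bmatrix}0 & p\end{bmatrix}=\begin{bmatrix}p & 0\end{bmatrix}\sigma$ for the block-swap unitary $\sigma$ and combining Lemma \ref{95} with Proposition \ref{91}(2) to get $\vert w\sigma\vert=\sigma^*\vert w\vert\sigma$, in the same spirit as your item (4), so it is a routine verification rather than a gap.
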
 

We extend $\sim$ to the following relation on $\mathcal{OP}_\infty(V)$:

\begin{definition}[\cite{KO21}, Definition 4.4]
Let $V$ be an absolute matrix order unit space. For $p,q\in \mathcal{OP}_\infty(V),$ we say that $p\approx q$, if there exists $r\in \mathcal{OP}_\infty(V)$ such that $p\oplus r\sim q\oplus r.$
\end{definition}

The newly defined relation $\approx$ enjoys the following properties:

\begin{proposition}[\cite{KO21}, Proposition 4.5]\label{16}
Let $V$ be an absolute matrix order unit space and let $p,q \in \mathcal{OP}_\infty(V)$. 
\begin{enumerate}
\item[(1)] $p \sim q$ implies $p \approx q$.  
\item[(2)] If $V$ satisfies condition (T), then $\approx$ is an equivalence relation. 
\end{enumerate}
\end{proposition}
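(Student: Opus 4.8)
The plan is to reduce the entire statement to the structural properties of the relation $\sim$ already collected in Proposition \ref{15}, in particular its compatibility with $\oplus$ (parts (3), (4), (6)) and, under condition (T), the fact that it is an equivalence relation (part (7)). The guiding observation is that $\approx$ is built from $\sim$ by allowing a common ``padding'' order projection $r$, so each of reflexivity, symmetry, and transitivity of $\approx$ should follow from the corresponding behaviour of $\sim$ once the padding blocks are managed correctly.

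For part (1), given $p \sim q$, I would exhibit the required $r$ explicitly as a zero order projection. First I note that $0_1 \in \mathcal{OP}_1(V)$ (since $0_1^\ast = 0_1$ and $\vert 2\cdot 0_1 - e\vert_1 = \vert e\vert_1 = e$) and that $0_1 \sim 0_1$ via the partial isometry $v = 0_1$, a fact that needs no hypothesis on $V$. Applying Proposition \ref{15}(3) to $p \sim q$ and $0_1 \sim 0_1$ then gives $p \oplus 0_1 \sim q \oplus 0_1$, which is exactly $p \approx q$ with $r = 0_1$.

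For part (2) I would verify reflexivity, symmetry, and transitivity, now working under condition (T) so that $\sim$ is an equivalence relation by Proposition \ref{15}(7). Reflexivity and symmetry are immediate: $p \approx p$ follows by taking $r = 0_1$ and invoking reflexivity of $\sim$, while if $p \oplus r \sim q \oplus r$ then symmetry of $\sim$ yields $q \oplus r \sim p \oplus r$, so $q \approx p$ with the same $r$. The substance lies in transitivity. Suppose $p \approx q$ and $q \approx s$, witnessed by $r_1, r_2 \in \mathcal{OP}_\infty(V)$ with $p \oplus r_1 \sim q \oplus r_1$ and $q \oplus r_2 \sim s \oplus r_2$. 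I would set $r = r_1 \oplus r_2$, which lies in $\mathcal{OP}_\infty(V)$ by Proposition \ref{3}. Appending the missing block via Proposition \ref{15}(3) upgrades the two witnesses to $p \oplus r_1 \oplus r_2 \sim q \oplus r_1 \oplus r_2$ and $q \oplus r_2 \oplus r_1 \sim s \oplus r_2 \oplus r_1$. Using commutativity and associativity of $\oplus$ up to $\sim$ (Proposition \ref{15}(4),(6)) together with Proposition \ref{15}(3), I obtain the reorderings $q \oplus r_1 \oplus r_2 \sim q \oplus r_2 \oplus r_1$ and $s \oplus r_2 \oplus r_1 \sim s \oplus r_1 \oplus r_2$; chaining these four $\sim$-relations through transitivity of $\sim$ produces $p \oplus r \sim s \oplus r$, i.e.\ $p \approx s$.

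The main obstacle I expect is exactly this last bookkeeping of the padding blocks $r_1$ and $r_2$: although every ingredient is on hand, transitivity of $\approx$ genuinely rests on transitivity of $\sim$, which is where condition (T) enters, and one must permute the summands carefully so that the chain of $\sim$-relations actually composes. The reflexivity and symmetry arguments, by contrast, are purely formal consequences of the corresponding properties of $\sim$.
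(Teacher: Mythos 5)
Your proposal is correct, and it is essentially the argument one expects (and that \cite{KO21} gives): part (1) by padding with a trivially self-equivalent projection via Proposition \ref{15}(3), and transitivity of $\approx$ by combining the two witnesses into $r_1\oplus r_2$ and permuting blocks with Proposition \ref{15}(3),(4),(6) before invoking transitivity of $\sim$ from condition (T). The only blemish is cosmetic: $2\cdot 0_1-e=-e$, not $e$, though $\vert -e\vert_1=e$ all the same, so the verification that $0_1\in\mathcal{OP}_1(V)$ stands.
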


Next, we recall one of the main result in \cite{KO21} around which whole theory of \cite{KO21} revolves.

\begin{theorem}[\cite{KO21}, Theorem 4.8]
Let $(V,e)$ be an absolute matrix order unit space satisfying (T) and consider $\mathcal{OP}_\infty(V) \times \mathcal{OP}_\infty(V)$. For all $p_1,p_2,q_1,q_2 \in \mathcal{OP}_\infty(V),$ we define $(p_1,q_1) \equiv (p_2,q_2)$ if and only if $p_1\oplus q_2 \approx p_2\oplus q_1.$ Then $\equiv$ is an equivalence relation on $\mathcal{OP}_\infty(V) \times \mathcal{OP}_\infty(V).$ Consider $$K_0(V)=\lbrace [(p_,q)]: p,q\in \mathcal{OP}_\infty\rbrace$$ where $[(p,q)]$ is the equivalence class of $(p,q)$ in $(\mathcal{OP}_\infty(V) \times \mathcal{OP}_\infty(V),\equiv).$ For all $p_1,p_2,q_1,q_2 \in \mathcal{OP}_\infty(V),$ we write $$[(p_1,q_1)]+ [(p_2,q_2)] = [(p_1\oplus p_2, q_1\oplus q_2)].$$ Then $(K_0(V),+)$ is an abelian group.
\end{theorem}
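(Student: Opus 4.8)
The plan is to recognize this construction as the standard Grothendieck group of a commutative monoid and to verify the group axioms by pushing the order-projection identities of Proposition \ref{15} through the quotient. First I would record that $(\mathcal{OP}_\infty(V), \oplus)$ behaves as a commutative monoid modulo $\sim$: associativity is the literal identity of Proposition \ref{15}(6), commutativity up to $\sim$ is Proposition \ref{15}(4), the zero projection is a two-sided identity up to $\sim$ by Proposition \ref{15}(1), and $\oplus$ respects $\sim$ by Proposition \ref{15}(3). Under hypothesis (T) the relation $\sim$, and hence $\approx$, is an equivalence relation by Propositions \ref{15}(7) and \ref{16}(2), so these statements genuinely descend to the quotient.

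The next step is to establish that $\oplus$ is compatible with $\approx$: if $p \approx p'$ and $q \approx q'$, then $p \oplus q \approx p' \oplus q'$. I would unwind the definition to get stabilizers $r_1, r_2 \in \mathcal{OP}_\infty(V)$ with $p \oplus r_1 \sim p' \oplus r_1$ and $q \oplus r_2 \sim q' \oplus r_2$, take the direct sum of these two equivalences via Proposition \ref{15}(3), and then reshuffle the summands using commutativity and associativity (up to $\sim$) to exhibit $r_1 \oplus r_2$ as a common stabilizer for $p \oplus q$ and $p' \oplus q'$. With this in hand, well-definedness of $+$ reduces to the same reshuffling: if $(p_1, q_1) \equiv (p_1', q_1')$ and $(p_2, q_2) \equiv (p_2', q_2')$, I add the two defining $\approx$-relations, rearrange the summands, and read off $(p_1 \oplus p_2, q_1 \oplus q_2) \equiv (p_1' \oplus p_2', q_1' \oplus q_2')$.

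Finally I would check the group axioms. Reflexivity and symmetry of $\equiv$ are immediate from those of $\approx$; associativity and commutativity of $+$ are inherited from Propositions \ref{15}(6) and \ref{15}(4); the class $[(p,p)]$, equivalently $[(0,0)]$, is the identity, and the inverse of $[(p,q)]$ is $[(q,p)]$, since $[(p,q)] + [(q,p)] = [(p \oplus q, q \oplus p)]$ and $p \oplus q \approx q \oplus p$ places this in the identity class. The main obstacle is transitivity of $\equiv$, which is exactly the point where the Grothendieck construction needs a cancellation device: given $p_1 \oplus q_2 \approx p_2 \oplus q_1$ and $p_2 \oplus q_3 \approx p_3 \oplus q_2$, I would $\oplus$-add the two relations, use commutativity to align the repeated $p_2$ and $q_2$ summands, and then absorb them into the existential stabilizer supplied by the definition of $\approx$. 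This is precisely why $\approx$ rather than $\sim$ is used in the definition of $\equiv$, and why hypothesis (T), which makes $\approx$ transitive, is indispensable to the argument.
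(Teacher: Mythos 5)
Your proposal is correct and follows essentially the same route as the paper's construction (which is the template also used verbatim for the $K_1$ and $K$ analogues, Theorems \ref{64} and \ref{75}): build the cancellative abelian semigroup of $\approx$-classes from Proposition \ref{15} and Proposition \ref{16}, and settle transitivity of $\equiv$ by adding the two defining relations, reshuffling, and cancelling the repeated $p_2\oplus q_2$ summand by absorbing it into the stabilizer from the definition of $\approx$. This is exactly the paper's cancellation-law argument, so no further comment is needed.
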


\begin{remark}[\cite{KO21}, Remark 4.9]
The abelian group $K_0(V)$ is called the $K_0(V)$-group of the absolute matrix order unit space $V.$ If $A$ is a $C^*$-algebra, then $K_0(A)$ is Grothendieck group of $A$ for projections. Thus $K_0$-group for absolute matrix order unit spaces is a generalization of $K_0$-group for $C^*$-algebras.
\end{remark}

For the definitions of group cone, proper cone, generating cone, ordered abelian group, ditinguished order unit and ordered abelian group with ditinguished order unit etc., see \cite[Subsection 4.1]{KO21} and for the definition of finite order projections, see \cite[Definition 5.1]{PI19}.  

The next two results tells that $K_0(V)$ receives order structure and order unit from an absolute order unit space $V$ in heredity so that it becomes an ordered abelian group with a distinguished order unit.

\begin{theorem}[\cite{KO21}, Theorem 4.10]
	Let $V$ be an absolute matrix order unit space satisfying (T). Put $K_0(V)^+ = \lbrace [(p,0)]: p\in \mathcal{OP}_\infty(V)\rbrace.$ Then 
	\begin{enumerate}
		\item[(a)]$K_0(V)^+$ is a group cone in $K_0(V)$. 
		\item[(b)]If $e^n$ is finite for all $n \in \mathbb{N}$, then $K_0(V)^+$ is proper. 
		\item[(c)] $K_0(V)^+$ is generating. 
	\end{enumerate} 
	In other words, if $e^n$ is finite for all $n \in \mathbb{N}$, then $(K_0(V),K_0(V)^+)$ is an ordered abelian group.
\end{theorem}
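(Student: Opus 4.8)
The plan is to check the three cone axioms separately, spending the bulk of the effort on properness (b), which is the only place the finiteness hypothesis enters. For (a), I would first observe that the zero element $[(0,0)]$ of $K_0(V)$ lies in $K_0(V)^+$ (take $p=0$), and that for $[(p,0)],[(q,0)]\in K_0(V)^+$ the group law gives $[(p,0)]+[(q,0)]=[(p\oplus q,0\oplus 0)]=[(p\oplus q,0)]$; since $p\oplus q\in\mathcal{OP}_\infty(V)$ by Proposition \ref{3}, this sum again has a representative with zero second coordinate, so it lies in $K_0(V)^+$. Thus $K_0(V)^+$ is a submonoid of $K_0(V)$, i.e. a group cone.

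For (c) I would express an arbitrary class as a difference of positive ones. The inverse of $[(q,0)]$ is $[(0,q)]$, since $[(q,0)]+[(0,q)]=[(q\oplus 0,0\oplus q)]=[(q,q)]=[(0,0)]$ (the class $[(q,q)]$ is zero because $(q,q)\equiv(0,0)$ reduces to $q\approx q$). Consequently $[(p,0)]-[(q,0)]=[(p,0)]+[(0,q)]=[(p\oplus 0,0\oplus q)]=[(p,q)]$, where the last equality holds because $(p\oplus 0,0\oplus q)\equiv(p,q)$ reduces to the reflexive instance $p\oplus 0\oplus q\approx p\oplus 0\oplus q$. Hence every element of $K_0(V)$ is a difference of elements of $K_0(V)^+$, so $K_0(V)^+$ is generating.

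For (b), suppose $x\in K_0(V)^+\cap(-K_0(V)^+)$ and write $x=[(p,0)]$, $-x=[(q,0)]$. Then $[(p\oplus q,0)]=[(p,0)]+[(q,0)]=[(0,0)]$, so by the definition of $\equiv$ we get $p\oplus q\approx 0$. Unwinding $\approx$ and using Proposition \ref{15}(1) to strip the trailing zeros and then repad to a common size (with transitivity of $\sim$ available from condition (T)), I obtain $N=o(p)+o(q)+o(r)$ and order projections $P:=(p\oplus q)\oplus r$ and $Q:=0_{o(p)+o(q)}\oplus r$ in $\mathcal{OP}_N(V)$ with $P\sim Q$ and $Q\leq P$ (their difference being $(p\oplus q)\oplus 0_{o(r)}\geq 0$). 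Since $P\leq e^N$ (every order projection is dominated by the unit) and $e^N$ is finite, $P$ is finite, and then $P\sim Q\leq P$ forces $Q=P$; therefore $p\oplus q=0$, whence $p=0$ and $x=0$. This shows $K_0(V)^+\cap(-K_0(V)^+)=\{0\}$, i.e. $K_0(V)^+$ is proper.

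The main obstacle is the finiteness step: the hypothesis is stated for $e^N$, so I must pass from ``$e^N$ finite'' to ``$P$ finite'' for the subprojection $P\leq e^N$. This requires the hereditary property that any order projection dominated by a finite order projection is again finite, together with the defining property of a finite order projection (\cite[Definition 5.1]{PI19}) that $P\sim Q$ with $Q\leq P$ forces $Q=P$. I would isolate this hereditary property as the key lemma and verify it directly against that definition; once it is in hand, the padding and transitivity manipulations in (a)--(c) are routine, given that $\sim$ and $\approx$ are genuine equivalence relations under (T) (Propositions \ref{15}(7) and \ref{16}(2)).
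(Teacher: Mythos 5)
Your proposal is correct and matches the standard argument: the paper only recalls this statement from \cite{KO21} without reproving it, but its parallel proof for $K(V)^+$ (Theorem \ref{84}) proceeds exactly as you do --- routine verification of the cone and generating properties, and for properness, unwinding $\approx$ to obtain order projections $P\sim Q$ with $Q\le P$ in some $\mathcal{OP}_N(V)$ and invoking finiteness of $e^N$. The ``hereditary finiteness'' lemma you isolate is precisely what \cite[Corollary 5.1]{PI19} supplies (it is stated directly in terms of $e^N$ being finite), so you can cite it rather than reprove it.
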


\begin{corollary}[\cite{KO21}, Corollary 4.11]
	Let $(V,e)$ be an absolute matrix order unit space satisfying (T) and let $e^n$ be finite for all $n \in \mathbb{N}$. Then $(K_0(V),K_0(V)^+)$ is an ordered abelian group with distinguished order unit $[(e,0)].$ In other words, for each $g\in K_0(V),$ there exists $n\in \mathbb{N}$ such that $-n[(e,0)]\le g\le n[(e,0)].$    
\end{corollary}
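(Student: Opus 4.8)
Since the preceding theorem already gives that $(K_0(V),K_0(V)^+)$ is an ordered abelian group, the only new content here is that $[(e,0)]$ is a distinguished order unit; as $[(e,0)]$ is of the form $[(p,0)]$ it already lies in $K_0(V)^+$, so the plan is to verify the order-unit property: for each $g\in K_0(V)$ there is $n\in\mathbb{N}$ with $-n[(e,0)]\le g\le n[(e,0)]$. I would first reduce the two-sided estimate to a one-sided one. Write a general class as $g=[(p,q)]$ with $p\in\mathcal{OP}_m(V)$ and $q\in\mathcal{OP}_k(V)$; since $-g=[(q,p)]$, it suffices to produce for every class $[(p,q)]$ an upper bound $[(p,q)]\le m[(e,0)]$, after which applying this to $-g$ yields a lower bound and $n=\max\{m,k\}$ gives the symmetric estimate. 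Moreover, because $[(q,0)]\in K_0(V)^+$ and $g=[(p,0)]-[(q,0)]$, we have $g\le[(p,0)]$, so the whole problem collapses to the single inequality $[(p,0)]\le m[(e,0)]=[(e^m,0)]$ for an arbitrary $p\in\mathcal{OP}_m(V)$.

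The heart of the argument is a complementation step. For $p\in\mathcal{OP}_m(V)$ I would show that $p':=e^m-p$ is again an order projection, that $p\perp p'$, and that $p+p'=e^m$. Self-adjointness of $p'$ is immediate, while $\vert 2p'-e^m\vert_m=\vert-(2p-e^m)\vert_m=\vert 2p-e^m\vert_m=e^m$ using $\vert -a\vert=\vert a\vert$ for absolute values; the inequalities $-e^m\le 2p-e^m\le e^m$ (from $-\vert a\vert\le a\le\vert a\vert$) give $0\le p\le e^m$, so both $p,p'\in M_m(V)^+$. Orthogonality then follows at once, since $\vert p-p'\vert_m=\vert 2p-e^m\vert_m=e^m=p+p'$, which is exactly the defining condition $u\perp v\Leftrightarrow\vert u-v\vert_m=u+v$ for positive elements. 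With this in hand, Proposition \ref{15}(5) gives $e^m=p+p'\sim p\oplus p'$, and Proposition \ref{15}(4) gives $p\oplus p'\sim p'\oplus p$, so $e^m\sim p'\oplus p$, whence $e^m\approx p'\oplus p$ by Proposition \ref{16}(1). Unwinding the definition of $\equiv$, this says $(e^m,p)\equiv(p',0)$, so that $[(e^m,0)]-[(p,0)]=[(e^m,p)]=[(p',0)]\in K_0(V)^+$, i.e. $[(p,0)]\le[(e^m,0)]=m[(e,0)]$.

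Combining the reductions with the complementation estimate yields $g=[(p,q)]\le[(p,0)]\le m[(e,0)]$, and applying the same bound to $-g=[(q,p)]$ gives $-k[(e,0)]\le g$. Taking $n=\max\{m,k\}$ and using $m[(e,0)]\le n[(e,0)]$ (which holds because $[(e^n,0)]-[(e^m,0)]=[(e^{n-m},0)]\in K_0(V)^+$, as $e^{n-m}\oplus e^m=e^n$) we obtain $-n[(e,0)]\le g\le n[(e,0)]$, as required. I expect the main obstacle to be precisely the complementation step: checking cleanly, from the bare definition of an order projection together with the absolutely-ordered-space axioms $\vert -a\vert=\vert a\vert$ and $-\vert a\vert\le a\le\vert a\vert$, that $e^m-p$ is an order projection orthogonal to $p$ with $p+(e^m-p)=e^m$. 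Once that is secured, everything else is routine bookkeeping with the equivalences $\sim$ and $\approx$ and the group law on $K_0(V)$.
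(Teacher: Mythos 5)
Your argument is correct. The paper only recalls this corollary from \cite{KO21} and does not reproduce a proof, but your route --- reducing the two-sided bound to the single inequality $[(p,0)]\le m[(e,0)]$ for $p\in\mathcal{OP}_m(V)$, and establishing that inequality via the complement $p'=e^m-p$, which is an order projection orthogonal to $p$ with $p+p'=e^m\sim p\oplus p'$ by Proposition \ref{15}(5), so that $[(e^m,0)]-[(p,0)]=[(p',0)]\in K_0(V)^+$ --- is precisely the standard argument and matches the cited proof in spirit and substance.
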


We recall suitable morphisms on absolute matrix order unit spaces, called completely $\vert \cdot \vert$-preserving maps. These maps are generalization of $\vert \cdot \vert$-preserving maps on absolute order unit spaces to absolute matrix order unit spaces \cite{K19}. The author with Karn has proved that under certain conditions such maps are isometries on these spaces (see \cite[Theorem 3.3 and 4.6]{K19}).

\begin{definition}[\cite{K19}, Definition 4.5]
Let $V$ and $W$ be absolute matrix order unit spaces and let $\phi:V\to W$ be a $\ast$-linear map. We say that $\phi$ is completely $\vert \cdot \vert$-preserving, if $\phi_n:M_n(V)\to M_n(W)$ is an $\vert \cdot \vert$-preserving map for each $n \in \mathbb{N}.$ Further, $\phi$ is said to be unital completely $\vert \cdot \vert$-preserving, if it is completely $\vert \cdot \vert$-preserving and it also preserves order unit.
\end{definition}

Let $V$ and $W$ be absolute matrix order unit spaces. We denote the zero map between $V$ and $W$ by $0_{W,V}.$ Similarly, the identity map on $V$ is denoted by $I_V.$ Furthermore, if $V$ and $W$ satisfy $(T),$ then we denote the zero group homomorphism between $K_0(V)$ and $K_0(W)$ by $0_{K_0(W),K_0(V)}$ and the identity map on $K_0(V)$ is denoted by $I_{K_0(V)}.$

Finally, the following result describes the functoriality of $K_0.$

\begin{theorem}[\cite{KO21}, Corollary 5.3 and Remark 5.4]
Let $U,V$ and $W$ be absolute matrix order unit spaces satisfying (T). Then 
\begin{enumerate}
\item[(a)]$K_0(I_V)=I_{K_0(V)};$
\item[(b)]If $\phi: U \to V$ and $\psi: V \to W$ be unital completely $\vert \cdot \vert$-preserving maps, then $K_0(\psi \circ \phi) = K_0(\psi) \circ K_0(\phi);$
\item[(c)]$K_0(0_{W,V}) = 0_{K_0(W),K_0(V)}.$
\end{enumerate}
In particular, $K_0$ is a functor from category of absolute matrix order unit spaces with morphisms as unital completely $\vert \cdot \vert$-preserving maps to category of abelian groups.
\end{theorem}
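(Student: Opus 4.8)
The plan is to first equip $K_0$ with its action on morphisms: for each unital completely $\vert\cdot\vert$-preserving map $\phi\colon V\to W$ I would build a group homomorphism $K_0(\phi)\colon K_0(V)\to K_0(W)$, and then read off the three identities (a)--(c) from the behaviour of $\phi$ on order projections. The entire argument rests on one observation: $\phi$ carries order projections to order projections and respects the partial isometric equivalence $\sim$. Once this is established, the remaining verifications are formal.

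First I would check that $\phi$ maps $\mathcal{OP}_\infty(V)$ into $\mathcal{OP}_\infty(W)$. If $p\in\mathcal{OP}_n(V)$, then $\phi_n(p)^*=\phi_n(p^*)=\phi_n(p)$ by $*$-linearity, while unitality gives $\phi_n(e^n)=e^n$, so $2\phi_n(p)-e^n=\phi_n(2p-e^n)$; applying $\vert\cdot\vert$-preservation at level $n$ then yields $\vert 2\phi_n(p)-e^n\vert_n=\phi_n(\vert 2p-e^n\vert_n)=\phi_n(e^n)=e^n$, so $\phi_n(p)\in\mathcal{OP}_n(W)$. Next I would upgrade the square-level $\vert\cdot\vert$-preservation (which is what the hypothesis literally provides) to rectangular matrices. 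For $v\in M_{m,n}(V)$, applying $\phi_{m+n}$ to the self-adjoint dilation $\begin{bmatrix}0&v\\v^*&0\end{bmatrix}$ and invoking Proposition \ref{91}(2) on both sides produces two direct-sum decompositions whose matching blocks give $\phi_m(\vert v^*\vert_{n,m})=\vert\phi_{m,n}(v)^*\vert_{n,m}$ and $\phi_n(\vert v\vert_{m,n})=\vert\phi_{m,n}(v)\vert_{m,n}$. Hence if $p\sim q$ is witnessed by a partial isometry $v$ with $p=\vert v^*\vert_{n,m}$ and $q=\vert v\vert_{m,n}$, then both absolute values of $\phi_{m,n}(v)$ are order projections by the previous step, so $\phi_{m,n}(v)\in\mathcal{PI}_{m,n}(W)$ witnesses $\phi(p)\sim\phi(q)$. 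Because $\phi$ acts entrywise it also satisfies $\phi(p\oplus q)=\phi(p)\oplus\phi(q)$, and $\approx$ is preserved since $p\oplus r\sim q\oplus r$ forces $\phi(p)\oplus\phi(r)\sim\phi(q)\oplus\phi(r)$.

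With these compatibilities I would set $K_0(\phi)([(p,q)])=[(\phi(p),\phi(q))]$. This is well defined because $(p_1,q_1)\equiv(p_2,q_2)$, i.e.\ $p_1\oplus q_2\approx p_2\oplus q_1$, forces $\phi(p_1)\oplus\phi(q_2)\approx\phi(p_2)\oplus\phi(q_1)$ and hence $(\phi(p_1),\phi(q_1))\equiv(\phi(p_2),\phi(q_2))$; it is a homomorphism by $\phi(p_1\oplus p_2)=\phi(p_1)\oplus\phi(p_2)$ and the definition of $+$ on $K_0$. The three identities then drop out. For (a), $(I_V)_n$ is the identity on $M_n(V)$, so $K_0(I_V)([(p,q)])=[(p,q)]$. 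For (b), $\psi\circ\phi$ is again unital completely $\vert\cdot\vert$-preserving with $(\psi\circ\phi)_n=\psi_n\circ\phi_n$, giving $K_0(\psi\circ\phi)([(p,q)])=[(\psi(\phi(p)),\psi(\phi(q)))]=(K_0(\psi)\circ K_0(\phi))([(p,q)])$. For (c), $0_{W,V}$ sends every order projection to a zero projection, so $K_0(0_{W,V})([(p,q)])=[(0,0)]$, the neutral element of $K_0(W)$.

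The main obstacle is precisely the second step above: transporting $\vert\cdot\vert$-preservation from square matrices, where it is assumed, to the rectangular matrices where $\sim$ actually lives, so that $\phi$ is seen to send partial isometries to partial isometries compatibly with $\sim$. The hinge is Proposition \ref{91}(2), which identifies $\bigl\vert\begin{bmatrix}0&v\\v^*&0\end{bmatrix}\bigr\vert_{m+n}$ with $\vert v^*\vert_{n,m}\oplus\vert v\vert_{m,n}$; equating the two decompositions of $\phi_{m+n}$ applied to this absolute value is exactly what yields the rectangular statement. Everything afterwards is bookkeeping with the already-established facts that $\sim$ and $\approx$ descend to the quotients defining $K_0$ (Propositions \ref{15} and \ref{16}).
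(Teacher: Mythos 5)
Your argument is correct, and it follows the same template the paper itself uses when it proves the analogous functoriality statements for $K_1$ and $K$ (Theorems \ref{79}, \ref{82} and Corollaries \ref{80}, \ref{83}): show the morphism preserves the distinguished class of elements and the equivalence relation, define the induced map on Grothendieck classes, and read off (a)--(c) formally; the only genuinely nontrivial point, passing from square-level $\vert\cdot\vert$-preservation to rectangular absolute values via Proposition \ref{91}(2) so that $\phi$ respects $\sim$, is handled correctly. Note that the present paper does not prove this statement but recalls it from \cite{KO21}, so there is no in-paper proof to diverge from; your construction is the expected one.
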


\section{$K_1$-group corresponding to an absolute matrix order unit space}

From this section, we begin our work. The notion of unitary elements in an absolute matrix order unit space has been introduced by Karn and the author in \cite{PI19}. In this section, we study basic properties of unitary elements in absolute matrix order unit spaces. We also study path homotopy equivalence of unitary elements in absolute matrix order unit spaces. By path homotopy equivalence, we define and study some variants of equivalence of unitary elements which will help us to describe the Grothendieck group $K_1(V)$ of an absolute matrix order unit space $V$ for unitary elements. The $K_1$-group for absolute matrix order unit spaces is a generalization of $K_1$-group for $C^*$-algebras. Later, we study order structure and functoriality of $K_1(V).$ Now, we begin with the following definition:

\begin{definition}[\cite{PI19}, Definition 3.1(2)]
Let $(V,e)$ be an absolute matrix order unit space and let $u \in M_n(V)$ for some $n\in \mathbb{N}.$ Then $u$ is said to be \emph{unitary}, if $\vert u \vert_n = e^n =\vert u^* \vert_n.$ We denote the set of all the unitary elements in $M_n(V)$ by $\mathcal{U}_n(V).$ Under the identification  $M_\infty(V) = \displaystyle \bigcup_{n=1}^\infty M_n(V),$ the corresponding set of unitaries is identified with $\mathcal{U}_\infty(V) = \displaystyle \bigcup_{n=1}^\infty \mathcal{U}_n(V).$
\end{definition}

We recall some properties of unitary elements.

\begin{proposition}\label{30}
Let $V$ be an absolute matrix order unit space. Then 
\begin{enumerate}
\item[(1)] If $m,n \in \mathbb{N}$ and $u\in \mathcal{U}_m(V),~v\in \mathcal{U}_n(V),$ then $u\oplus v=\begin{bmatrix} u & 0 \\ 0 & v \end{bmatrix}\in \mathcal{U}_{m+n}(V).$ In particular, $\oplus$ defines a binary operation on $\mathcal{U}_{\infty}(V).$
\item[(2)] If $v\in \mathcal{U}_n(V)$ and $\alpha \in \mathcal{U}_n(\mathbb{C}),$ then $\alpha^*v\alpha\in\mathcal{U}_n(V).$ 
\item[(3)] For each $v\in \mathcal{U}_n(V),w=\begin{bmatrix}0 & v \\ v^* & 0\end{bmatrix} \in \mathcal{U}_{2n}(V)$ with $w^*=w.$    
\end{enumerate}
\end{proposition}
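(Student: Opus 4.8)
The plan is to verify, in each of the three parts, the two defining conditions of unitarity --- namely $|\cdot|_k = e^k$ for the element and for its adjoint --- and in every case the verification reduces to a single structural identity for absolute values already recorded above. Throughout I use that $u \in M_k(V)$ is unitary exactly when $|u|_k = e^k = |u^*|_k$.

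For part (1), I would note that $(u \oplus v)^* = u^* \oplus v^*$ and then apply the additivity of the absolute value over direct sums (condition $3$ of Definition \ref{152}). This gives $|u \oplus v|_{m+n} = |u|_m \oplus |v|_n = e^m \oplus e^n = e^{m+n}$, and the same computation applied to $u^* \oplus v^*$ yields $|(u \oplus v)^*|_{m+n} = e^{m+n}$; hence $u \oplus v \in \mathcal{U}_{m+n}(V)$. Since $\oplus$ sends a pair of unitaries into the next matrix level, it restricts to a binary operation on $\mathcal{U}_\infty(V)$.

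For part (2), the main tool is Lemma \ref{95}: for a scalar unitary $\alpha \in M_n$ one has $|\alpha^* v \alpha|_n = \alpha^* |v|_n \alpha = \alpha^* e^n \alpha$. The only step that is not an immediate citation is the identity $\alpha^* e^n \alpha = e^n$; writing $e^n$ as the diagonal matrix with $e$ on each diagonal entry and expanding the product entrywise, the $(i,j)$ entry of $\alpha^* e^n \alpha$ equals $(\alpha^* \alpha)_{ij}\, e = \delta_{ij}\, e$ because $\alpha^* \alpha = I_n$, so the product is $e^n$ again. Applying Lemma \ref{95} once more to $(\alpha^* v \alpha)^* = \alpha^* v^* \alpha$ disposes of the adjoint condition, giving $\alpha^* v \alpha \in \mathcal{U}_n(V)$.

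For part (3), I would first check directly that $w$ is self-adjoint: taking the adjoint of the block matrix $w = \begin{bmatrix} 0 & v \\ v^* & 0 \end{bmatrix}$ interchanges the off-diagonal entries and applies the involution, returning $w$. Because $w^* = w$, it suffices to compute $|w|_{2n}$, and Proposition \ref{91}(2) with $m = n$ gives $|w|_{2n} = |v^*|_n \oplus |v|_n = e^n \oplus e^n = e^{2n}$, so that $|w|_{2n} = e^{2n} = |w^*|_{2n}$ and $w \in \mathcal{U}_{2n}(V)$. None of the three parts poses a genuine obstacle; the only computation beyond quoting a prior result is the scalar identity $\alpha^* e^n \alpha = e^n$ in part (2), which is routine once the matrix product is expanded.
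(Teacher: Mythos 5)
Your proposal is correct and follows essentially the same route as the paper: part (1) via Definition \ref{152}(3), part (2) via Lemma \ref{95}, and part (3) via Proposition \ref{91}(2). The only cosmetic difference is that you verify $\alpha^* e^n \alpha = e^n$ by expanding entries, whereas the paper writes $\alpha^* e^n \alpha = \alpha^*\alpha e^n = e^n$; both amount to the same observation.
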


\begin{proof}
Let $u \in \mathcal{U}_m(V),v\in \mathcal{U}_n(V)$ and $\alpha \in \mathcal{U}_n(\mathbb{C})$ for some $m,n\in \mathbb{N}.$ Then, by Definition \ref{152}(3), we get that $\vert u\oplus v\vert_{m+n}=\left \vert  \begin{bmatrix} u & 0 \\ 0 & v \end{bmatrix}\right \vert_{m+n} = \begin{bmatrix} \vert u\vert_m & 0 \\ 0 & \vert v\vert_n \end{bmatrix} = \begin{bmatrix} e^m & 0 \\ 0 & e^n\end{bmatrix}= e^{m+n}.$ Similarly, we get that $\vert (u\oplus v)^*\vert_{m+n}=e^{m+n}.$ Thus $u \oplus v \in \mathcal{U}_{m+n}(V).$

Next, by Lemma \ref{95}, we get that $|\alpha^*v\alpha|_n=\alpha^*|v|_n\alpha=\alpha^*e^n\alpha=\alpha^*\alpha e^n=I_ne^n=e^n.$ Similarly, we get that $|(\alpha^*v\alpha)^*|_n=\vert \alpha^*v^*\alpha\vert_n=e^n.$ Thus $\alpha^*v\alpha \in \mathcal{U}_n(V).$

Put $w=\begin{bmatrix}0 & v \\ v^* & 0\end{bmatrix}.$ Then $w^*=w$ and by Proposition \ref{91}(2), we have $\vert w\vert_{m+n}=\vert v^*\vert_n\oplus \vert v\vert_n=e^n\oplus e^n=e^{2n}$ so that $w \in \mathcal{U}_{2n}(V).$ 
\end{proof}

By Theorem \ref{1}, we know that absolute matrix order unit spaces are normed spaces. Thus, we can talk about norm continuous functions in an absolute matrix order unit space. Next, we study the path homotopy of unitary elements in absolute matrix order unit spaces.

\begin{definition}
Let $(V,e)$ be an absolute matrix order unit space. Let $u,v \in \mathcal{U}_n(V)$ for some $n \in \mathbb{N}.$ We say that $u$ is path homotopic to $v$ (we write it, $u\sim_h v$) if there exists a continuous function $f:[0,1] \to \mathcal{U}_n(V)$ such that $f(0)=u$ and $f(1)=v.$
\end{definition}

The path homotopy of unitaries is an equivalence realtion.

\begin{proposition}\label{33}
Let $(V,e)$ be an absolute matrix order unit space and let $n \in \mathbb{N}.$ Then $\sim_h$ is an equivalence relation on $\mathcal{U}_n(V).$
\end{proposition}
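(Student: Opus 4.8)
The plan is to verify the three defining properties of an equivalence relation on $\mathcal{U}_n(V)$ directly, using constant paths, path reversal, and path concatenation. These are the standard homotopy-theoretic constructions, and the only thing that needs checking is that each construction stays inside $\mathcal{U}_n(V)$ and remains norm-continuous as a map into the absolute matrix order unit space.

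For reflexivity, given $u \in \mathcal{U}_n(V)$, I would take the constant function $f \equiv u$ on $[0,1]$, which is trivially continuous, has $f(0)=f(1)=u$, and lands in $\mathcal{U}_n(V)$; hence $u \sim_h u$. For symmetry, suppose $u \sim_h v$ via a continuous $f:[0,1]\to \mathcal{U}_n(V)$ with $f(0)=u$, $f(1)=v$; I would define $g(t)=f(1-t)$. Since $t\mapsto 1-t$ is continuous on $[0,1]$, the composition $g$ is continuous, $g(0)=v$, $g(1)=u$, and $g$ takes values in $\mathcal{U}_n(V)$, so $v \sim_h u$.

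For transitivity, suppose $u \sim_h v$ via $f$ and $v \sim_h w$ via $h$, with $f(1)=v=h(0)$. I would define the concatenation
$$
k(t) = \begin{cases} f(2t) & 0 \le t \le \tfrac{1}{2}, \\ h(2t-1) & \tfrac{1}{2} \le t \le 1. \end{cases}
$$
The two pieces agree at $t=\tfrac{1}{2}$, since $f(1)=v=h(0)$, so by the pasting lemma $k$ is continuous on $[0,1]$; moreover $k(0)=u$, $k(1)=w$, and $k$ takes values in $\mathcal{U}_n(V)$, giving $u \sim_h w$. This establishes that $\sim_h$ is an equivalence relation on $\mathcal{U}_n(V)$.

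There is no substantial obstacle here: the argument is the familiar proof that homotopy of paths is an equivalence relation, and the only point requiring the structure of $V$ is that $\mathcal{U}_n(V)$ is a well-defined subset of the normed space $M_n(V)$ (which we have from Theorem \ref{1}), so that \emph{continuous} has unambiguous meaning and the three constructed paths genuinely land in $\mathcal{U}_n(V)$. The mild subtlety worth stating explicitly is the continuity of the pasted map $k$ at the joining point $t=\tfrac12$; this is exactly where one invokes the matching condition $f(1)=h(0)$ together with continuity of each piece.
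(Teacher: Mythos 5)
Your proposal is correct and follows essentially the same route as the paper: a constant path for reflexivity, the reversed path $t\mapsto f(1-t)$ for symmetry, and the pasted concatenation for transitivity, with the pasting lemma ensuring continuity at $t=\tfrac12$. No issues.
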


\begin{proof}
Let $u,v$ and $w \in \mathcal{U}_n(V).$ Put $f(t)=u$ for all $t \in [0,1].$ Then $f$ is a continuous function with $f(0)=u=f(1).$ Thus $u \sim_h u.$ 

If $f_1,f_2:[0,1]\to \mathcal{U}_n(V)$ be continuous functions with $f_1(0)=u, f_1(1)=v=f_2(0)$ and $f_2(1)=w,$ then the functions $g_1,g_2:[0,1]\to \mathcal{U}_n(V)$ defined by  $$g_1(t)=f_1(1-t)$$ and 

\[ 
   g_2(t) = \begin{cases}
    f_1(2t) & \text{at t $\in$ [0,$\frac{1}{2}$]}  \\
    f_2(2t-1) & \text{elsewhere}
   \end{cases}
\]

 are also continuous with $g_1(0)=v,g_1(1)=u=g_2(0)$ and $g_2(1)=w.$ Thus $u \sim_h v$ and $v \sim_h w$ implies $v \sim_h u$ and $u \sim_h w.$
\end{proof}

In next two results, we study some properties of the path homotopy of unitary elements in absolute matrix order unit spaces.

\begin{proposition}\label{32}
Let $V$ be an absolute matrix order unit space. Then 
\begin{enumerate}
\item[(1)] If $u\sim_h u'$ and $v\sim_h v'$ in $\mathcal{U}_m(V)$ and $\mathcal{U}_n(V)$ respectively, then $u\oplus v\sim_h u'\oplus v'$ in $\mathcal{U}_{m+n}(V).$    
\item[(2)] If $u\sim_h v$ and $\alpha \sim_h \beta$ in $~\mathcal{U}_n(V)$ and $~\mathcal{U}_n(\mathbb{C})$ respectively, then $\alpha^*u\alpha \sim_h \beta^*v\beta$ in $\mathcal{U}_n(V)$. 
\end{enumerate}
\end{proposition}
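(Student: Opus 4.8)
The plan is to build the two required homotopies directly from the given data by applying, pointwise in the homotopy parameter, exactly the two algebraic operations whose stability under $\mathcal{U}_\infty(V)$ is recorded in Proposition~\ref{30}, and then to verify continuity using the norm estimates of Theorem~\ref{1}. In both parts the construction is forced and well-definedness is handed to us by Proposition~\ref{30}; the only genuine work is continuity.

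For part (1), let $f\colon[0,1]\to\mathcal{U}_m(V)$ and $g\colon[0,1]\to\mathcal{U}_n(V)$ be continuous with $f(0)=u$, $f(1)=u'$, $g(0)=v$, $g(1)=v'$, and set $h(t)=f(t)\oplus g(t)$. By Proposition~\ref{30}(1) each $h(t)$ lies in $\mathcal{U}_{m+n}(V)$, and clearly $h(0)=u\oplus v$, $h(1)=u'\oplus v'$. Continuity is immediate from Theorem~\ref{1}(2) together with linearity of $\oplus$: since
$$\Vert h(t)-h(s)\Vert_{m+n}=\Vert (f(t)-f(s))\oplus(g(t)-g(s))\Vert_{m+n}=\max\{\Vert f(t)-f(s)\Vert_m,\Vert g(t)-g(s)\Vert_n\},$$
the right-hand side tends to $0$ as $t\to s$ by continuity of $f$ and $g$. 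Hence $u\oplus v\sim_h u'\oplus v'$.

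For part (2), let $f\colon[0,1]\to\mathcal{U}_n(V)$ and $\gamma\colon[0,1]\to\mathcal{U}_n(\mathbb{C})$ be continuous with $f(0)=u$, $f(1)=v$, $\gamma(0)=\alpha$, $\gamma(1)=\beta$, and set $h(t)=\gamma(t)^*f(t)\gamma(t)$. Proposition~\ref{30}(2) guarantees $h(t)\in\mathcal{U}_n(V)$ for every $t$, and the endpoints are $\alpha^*u\alpha$ and $\beta^*v\beta$. The main obstacle here is continuity of a triple product, and this is where I would concentrate the effort. I would write the difference as the telescoping sum
$$h(t)-h(s)=\gamma(t)^*f(t)(\gamma(t)-\gamma(s))+\gamma(t)^*(f(t)-f(s))\gamma(s)+(\gamma(t)-\gamma(s))^*f(s)\gamma(s),$$
apply the submultiplicative estimate $\Vert\alpha w\beta\Vert_n\le\Vert\alpha\Vert\,\Vert w\Vert_n\,\Vert\beta\Vert$ of Theorem~\ref{1}(3) to each summand, and use that $\Vert\gamma(t)\Vert=\Vert\gamma(t)^*\Vert=1$ for unitary scalar matrices, together with the boundedness of $t\mapsto\Vert f(t)\Vert_n$ (a continuous function on the compact interval $[0,1]$). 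Each of the three terms then tends to $0$ as $t\to s$ by continuity of $f$ and $\gamma$, so $h$ is continuous and $\alpha^*u\alpha\sim_h\beta^*v\beta$. Thus no idea beyond the estimates already available in Theorem~\ref{1} is required; the telescoping decomposition reduces the triple-product continuity to three routine applications of Theorem~\ref{1}(3).
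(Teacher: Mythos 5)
Your proof is correct and follows the same route as the paper: form $f(t)\oplus g(t)$ and $\gamma(t)^*f(t)\gamma(t)$, invoke Proposition~\ref{30} for membership in $\mathcal{U}_{m+n}(V)$ and $\mathcal{U}_n(V)$, and use Theorem~\ref{1}(2) and (3) for continuity. Your telescoping decomposition in part (2) merely makes explicit the continuity argument that the paper compresses into a one-line appeal to $\Vert\alpha v\beta\Vert_n\le\Vert\alpha\Vert\,\Vert v\Vert_n\,\Vert\beta\Vert$.
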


\begin{proof}
Let $f_1:[0,1]\to \mathcal{U}_m(V)$ and  $f_2:[0,1]\to \mathcal{U}_n(V)$ be continuous functions such that $f_1(0)=u,f_1(1)=u',f_2(0)=v$ and $f_2(1)=v'.$ Put $g(t)=f_1(t)\oplus f_2(t)$ for all $t\in [0,1].$ Then $g(0)=u\oplus v, g(1)=u'\oplus v'$ and by Proposition \ref{30}(1), we get that $g(t)\in \mathcal{U}_{m+n}(V)$ for all $t\in [0,1]$. By Theorem \ref{1}(2), we have $\|g(t_1)-g(t_2)\|_{m+n}=\max \lbrace \|f_1(t_1)-f_1(t_2)\|_m,\|f_2(t_1)-f_2(t_2)\|_n \rbrace$ for all $t_1,t_2\in [0,1].$ Since $f_1$ and $f_2$ are continuous, we get that $g:[0,1]\to \mathcal{U}_{m+n}(V)$ is also a continuous function. Thus $u\oplus v\sim_h u'\oplus v'$ in $\mathcal{U}_{m+n}(V).$  

Next, let $f_1:[0,1]\to \mathcal{U}_n(\mathbb{C})$ and  $f_2:[0,1]\to \mathcal{U}_n(V)$ are continuous functions such that $f_1(0)=\alpha,f_1(1)=\beta,f_2(0)=u$ and $f_2(1)=v.$ Define $f_1^*:[0,1]\to \mathcal{U}_n(\mathbb{C})$ such that $f_1^*(t)=f_1(t)^*.$ Since $\Vert \delta^*\Vert = \Vert \delta \Vert$ for all $\delta \in M_n$ and $f_1$ is continuous, thus $f_1^*$ is also continuous. Put $g(t)=f_1^*(t)f_2(t)f_1(t)$ for all $t\in [0,1].$ Then $g(0)=\alpha^*u\alpha,g(1)=\beta^*v\beta$ and by Proposition \ref{30}(2), we get that $g(t)\in \mathcal{U}_n(V)$ for all $t\in [0,1].$ Now $\|\alpha v\beta\|_n\leq\|\alpha\|\|v\|_n\|\beta\|$ for all $v \in M_n(V),\alpha$ and $\beta \in M_n$, therefore $g:[0,1]\to \mathcal{U}_n(V)$ is also   a continuous function. Hence $\alpha^*u\alpha \sim_h \beta^*v\beta$ in $\mathcal{U}_n(V)$.  
\end{proof}

\begin{lemma}\label{34}
Let $V$ be an absolute matrix order unit space. Then $u\oplus v\sim_h v\oplus u$ in $\mathcal{U}_{m+n}(V)$ for all $u\in \mathcal{U}_m(V)$ and $v\in \mathcal{U}_n(V).$
\end{lemma}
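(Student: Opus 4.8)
The first thing to notice is that, in contrast with the $C^*$-algebraic situation, there is no product available on the elements of $\mathcal{U}_\infty(V)$: the only way Proposition \ref{30} lets us manufacture a new unitary from a given one is by conjugating it with a \emph{scalar} unitary $\alpha \in \mathcal{U}_{m+n}(\mathbb{C})$, via $\alpha^\ast w \alpha$. Consequently the usual ``rotation'' homotopy that passes through products $uv$ is unavailable here, and the entire path must be produced by conjugation alone. The plan is therefore to realise $v \oplus u$ as a conjugate of $u \oplus v$ by a fixed scalar unitary, and then to slide that scalar unitary back to the identity inside the connected group $\mathcal{U}_{m+n}(\mathbb{C})$.

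Concretely, I would let $Z \in M_{m+n}$ be the block-swap permutation matrix that interchanges the first $m$ coordinates with the last $n$. Being a permutation matrix it is unitary, so $Z \in \mathcal{U}_{m+n}(\mathbb{C})$, and a direct check gives $Z^\ast (u \oplus v) Z = v \oplus u$ for every $u \in M_m(V)$ and $v \in M_n(V)$. Since the classical unitary group $\mathcal{U}_{m+n}(\mathbb{C})$ is path-connected, I would choose a norm-continuous path $\alpha \colon [0,1] \to \mathcal{U}_{m+n}(\mathbb{C})$ with $\alpha(0) = I_{m+n}$ and $\alpha(1) = Z$, and set $f(t) = \alpha(t)^\ast (u \oplus v)\, \alpha(t)$.

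By Proposition \ref{30}(1) we have $u \oplus v \in \mathcal{U}_{m+n}(V)$, and then Proposition \ref{30}(2) guarantees $f(t) \in \mathcal{U}_{m+n}(V)$ for every $t \in [0,1]$; moreover $f(0) = u \oplus v$ and $f(1) = v \oplus u$. It remains only to check that $f$ is continuous. Writing
\[
f(t) - f(s) = \alpha(t)^\ast (u \oplus v)\bigl(\alpha(t) - \alpha(s)\bigr) + \bigl(\alpha(t) - \alpha(s)\bigr)^\ast (u \oplus v)\, \alpha(s),
\]
applying Theorem \ref{1}(3) to each summand, and using $\Vert \alpha(t) \Vert = \Vert \alpha(s) \Vert = 1$, I obtain $\Vert f(t) - f(s) \Vert_{m+n} \le 2 \Vert u \oplus v \Vert_{m+n}\, \Vert \alpha(t) - \alpha(s) \Vert$, which tends to $0$ as $s \to t$ by continuity of $\alpha$. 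Hence $f$ is a continuous path in $\mathcal{U}_{m+n}(V)$ from $u \oplus v$ to $v \oplus u$, giving $u \oplus v \sim_h v \oplus u$.

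The genuine subtlety, and the step I expect to be the main obstacle, is the one flagged at the outset: resisting the temptation to copy the $C^*$-algebra argument through products and instead carrying the whole homotopy by scalar conjugation. Once that is accepted, everything is powered by the quoted statements (Proposition \ref{30}(1),(2) and Theorem \ref{1}(3)) together with the elementary path-connectedness of $\mathcal{U}_{m+n}(\mathbb{C})$. If one prefers to avoid invoking connectedness of the full unitary group, in the equal-block case $m = n$ one may simply take the explicit rotation $\alpha(t) = \begin{bmatrix} \cos(\tfrac{\pi t}{2}) I_m & -\sin(\tfrac{\pi t}{2}) I_m \\ \sin(\tfrac{\pi t}{2}) I_m & \cos(\tfrac{\pi t}{2}) I_m \end{bmatrix}$, which runs from $I_{2m}$ to the swap unitary; for $m \ne n$ no such single rotation exists, and it is precisely the connectedness argument (or padding to equal blocks by copies of the order unit $e$, which is itself a unitary) that supplies the missing path.
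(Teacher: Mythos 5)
Your proof is correct and follows essentially the same route as the paper: both realise $v\oplus u$ as the conjugate $Z^\ast(u\oplus v)Z$ of $u\oplus v$ by the block-swap scalar unitary and then use that this swap unitary is connected to $I_{m+n}$ inside $\mathcal{U}_{m+n}(\mathbb{C})$ (the paper cites the spectral criterion of \cite[Lemma 2.1.3(ii)]{RLL00} and then invokes Proposition \ref{32}(2) to transport the homotopy, whereas you verify the continuity of $t\mapsto\alpha(t)^\ast(u\oplus v)\alpha(t)$ by hand via Theorem \ref{1}(3), which is exactly what the proof of Proposition \ref{32}(2) does internally).
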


\begin{proof}
Let $u\in \mathcal{U}_m(V)$ and $v\in \mathcal{U}_n(V).$ Put $\alpha = \begin{bmatrix} 0_{m,n} & I_m \\ I_n & 0_{n,m} \end{bmatrix}.$ Then $\alpha \in \mathcal{U}_{m+n}(\mathbb{C})$ so that $\sigma(\alpha)\subsetneq \mathbb{T},$ where $\sigma(\alpha)$ and $\mathbb{T}$ denote the spectrum of $\alpha$ and the unit circle in $\mathbb{C}$ respectively. Next, by \cite[Lemma 2.1.3(ii)]{RLL00}, we have $I_{m+n}\sim_h z$ in $\mathcal{U}_{m+n}(\mathbb{C}).$ Thus, by Proposition \ref{32}(2), we get that $u\oplus v = I_{m+n}\cdot (u\oplus v)\cdot I_{m+n}\sim_h z^*\cdot (u\oplus v)\cdot z = v\oplus u$ in $\mathcal{U}_{m+n}(V).$  
\end{proof}

Now, we define a new relation on unitaries by help of path homotopy. 

\begin{definition}
Let $V$ be an absolute matrix order unit space. Define the relation $\sim_1$ on $\mathcal{U}_{\infty}(V)$ by given $u\in \mathcal{U}_m(V)$ and $v\in \mathcal{U}_n(V),~u\sim_1 v$ if and only if there exists $k\in \mathbb{N},~k>max \lbrace m,n\rbrace$ such that $u\oplus e^{k-m}\sim_h v\oplus e^{k-n}$ in $\mathcal{U}_k(V).$ 
\end{definition}

The following result describes that new relation is stronger one. 

\begin{corollary}\label{35}
Let $(V,e)$ be an absolute matrix order unit space. Then 
\begin{enumerate}
\item[(1)] $\sim_h$ implies $\sim_1$ in $\mathcal{U}_m(V)$ for any $m \in \mathbb{N}.$
\item[(2)] $\sim_1$ is an equivalence relation on $\mathcal{U}_\infty(V).$
\end{enumerate}
\end{corollary}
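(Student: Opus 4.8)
The plan is to verify the two assertions separately, with part (1) and the reflexivity and symmetry of part (2) being immediate consequences of the facts already established for $\sim_h$, and the transitivity of $\sim_1$ being the only substantial point. Throughout I will use that $e^j \in \mathcal{U}_j(V)$ for every $j \ge 1$ (the order unit is positive, so $\vert e\vert_1 = e = \vert e^*\vert_1$, and Proposition \ref{30}(1) takes care of direct sums), so that all the padded elements $u \oplus e^{k-m}$ that appear do lie in the relevant $\mathcal{U}_k(V)$.

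For part (1), suppose $u \sim_h v$ in $\mathcal{U}_m(V)$. Applying Proposition \ref{32}(1) to $u \sim_h v$ together with the constant path $e^1 \sim_h e^1$ in $\mathcal{U}_1(V)$ gives $u \oplus e^1 \sim_h v \oplus e^1$ in $\mathcal{U}_{m+1}(V)$; taking $k = m+1$ in the definition of $\sim_1$ yields $u \sim_1 v$. For part (2), reflexivity of $\sim_1$ follows the same way (take $k = m+1$ and the constant path on $u \oplus e^1$), and symmetry is immediate from the symmetry of $\sim_h$ proved in Proposition \ref{33}, keeping the same witness $k$.

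The heart of the matter is transitivity, which I expect to be the main obstacle. Suppose $u \in \mathcal{U}_m(V)$, $v \in \mathcal{U}_n(V)$, $w \in \mathcal{U}_p(V)$ with $u \sim_1 v$ and $v \sim_1 w$, witnessed by $k_1 > \max\{m,n\}$ and $k_2 > \max\{n,p\}$, so that $u \oplus e^{k_1-m} \sim_h v \oplus e^{k_1-n}$ in $\mathcal{U}_{k_1}(V)$ and $v \oplus e^{k_2-n} \sim_h w \oplus e^{k_2-p}$ in $\mathcal{U}_{k_2}(V)$. I would set $k := k_1 + k_2 - n$ and lift both homotopies to level $k$ by appending constant unit blocks via Proposition \ref{32}(1): append $e^{k_2-n}$ to the first and $e^{k_1-n}$ to the second. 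The crucial observation is that the two resulting midpoints are literally the same matrix, because $e^a \oplus e^b = e^{a+b}$ and all the $e$-blocks are identical, so their order is immaterial; both equal $v \oplus e^{k_1+k_2-2n}$ in $\mathcal{U}_k(V)$. Concatenating at this common midpoint and using transitivity of $\sim_h$ (Proposition \ref{33}) produces $u \oplus e^{k-m} \sim_h w \oplus e^{k-p}$ in $\mathcal{U}_k(V)$.

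It remains to confirm the dimension constraint $k > \max\{m,p\}$: since $k_2 > n$ we have $k = k_1 + (k_2 - n) > k_1 > m$, and since $k_1 > n$ we have $k = k_2 + (k_1 - n) > k_2 > p$, so $k > \max\{m,p\}$ and hence $u \sim_1 w$. The only real difficulty here is the bookkeeping: reconciling the two distinct witnesses $k_1, k_2$ at a single level and ensuring the padded midpoints coincide exactly. The identity $e^a \oplus e^b = e^{a+b}$ is what makes this matching automatic, so that the swap Lemma \ref{34} is not even needed and the argument stays clean.
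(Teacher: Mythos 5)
Your proposal is correct and follows essentially the same route as the paper: both reduce part (1) and reflexivity/symmetry to Propositions \ref{32}(1) and \ref{33}, and both prove transitivity by padding the two homotopies with constant unit blocks to a common level at which the midpoints literally coincide (via $e^a \oplus e^b = e^{a+b}$) and then concatenating. The only cosmetic difference is your choice of level $k = k_1 + k_2 - n$ versus the paper's $k = k_1 + k_2$, which changes nothing of substance.
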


\begin{proof}
Let $u,v \in \mathcal{U}_m(V)$ with $u \sim_h v.$ Since $e^n \sim_h e^n$ for all $n \in \mathbb{N},$ by Proposition \ref{32}(1), we get $u \oplus e^{k-m} = u \oplus e^n \sim_h v\oplus e^n = v\oplus e^{k-m}$ for $k=m+n > m.$ Thus $u \sim_1 v$ in $\mathcal{U}_m(V).$

Next, let $u\in \mathcal{U}_l(V),v\in \mathcal{U}_m(V)$ and $w \in \mathcal{U}_n(V)$ with $u \sim_1 v$ and $v \sim_1 w.$ Then $u \sim_1 u$ as $u \sim_h u$ and there exist $k_1,k_2 \in \mathbb{N}$ with $k_1>\max \lbrace l,m\rbrace$ and  $k_2>\max \lbrace m,n\rbrace$ such that $u\oplus e^{k_1-l} \sim_h v\oplus e^{k_1-m}$ and $v\oplus e^{k_2-m} \sim_h w\oplus e^{k_2-n}$ in $\mathcal{U}_{k_1}(V)$ and $\mathcal{U}_{k_2}(V)$ respectively. Put $k=k_1+k_2$ so that $k>\max \lbrace l,m,n \rbrace$ and by Proposition \ref{32}(1), we have $$u\oplus e^{k-l}=(u\oplus e^{k_1-l})\oplus e^{k_2} \sim_h (v\oplus e^{k_1-m})\oplus e^{k_2}= v\oplus e^{k-m}$$ and $$v\oplus e^{k-m}=(v\oplus e^{k_2-m})\oplus e^{k_1} \sim_h (w\oplus e^{k_2-n})\oplus e^{k_1}=w\oplus e^{k-n}$$ in $\mathcal{U}_k(V).$ Now, by Proposition \ref{33}, we get $v\oplus e^{k-m} \sim_h u\oplus e^{k-l}$ and $u\oplus e^{k-l} \sim_h w\oplus e^{k-n}$ in $\mathcal{U}_k(V).$ Thus $v\sim_1 u$ and $u\sim_1 w$ in $\mathcal{U}_\infty(V).$

\end{proof}

We study some properties of newly defined equivalence relation.

\begin{proposition}\label{40}
Let $(V,e)$ be an absolute matrix order unit space and let $u,v,w,u',v'\in \mathcal{U}_{\infty}(V).$ Then
\begin{enumerate}
\item[(1)] $(u\oplus v)\oplus w=u\oplus (v\oplus w).$
\item[(2)] $u\sim_1 u\oplus e^n$ for all $n\in \mathbb{N}.$
\item[(3)] $u\oplus v\sim_1 v\oplus u.$
\item[(4)] If $u\sim_1 u'$ and $v\sim_1 v',$ then $u\oplus v\sim_1 u'\oplus v'.$
\end{enumerate}
\begin{proof}
Let $u \in \mathcal{U}_m(V).$ For fixed $n \in \mathbb{N},$ we can find $k \in \mathbb{N}$ such that $k > m+n.$ Then by Proposition \ref{33}, we get $u\oplus e^{k-m} \sim_h u\oplus e^{k-m}=(u\oplus e^n)\oplus e^{k-m-n}$ so that $u\sim_1 u\oplus e^n.$

By Lemma \ref{34}, we have $u\oplus v \sim_h v\oplus u.$ Now, by Corollary \ref{35}(1), we get $u\oplus v \sim_1 v\oplus u.$

Next, let $u' \in \mathcal{U}_l(V),v\in \mathcal{U}_{m'}(V)$ and $v'\in \mathcal{U}_{l'}(V)$ such that $u \sim_1 u'$ and $v\sim_1 v'.$ Then there exist $k_1,k_2 \in \mathbb{N},k_1 > \max \lbrace m,l\rbrace,k_2 > \max \lbrace m',l'\rbrace$ with $u\oplus e^{k_1-m}\sim_h u'\oplus e^{k1-l}$ and $v\oplus e^{k_2-m'}\sim_h v'\oplus e^{k_2-l'}.$ Put $k=k_1+k_2 > \lbrace m+m',l+l'\rbrace.$ Using part (1), Lemma \ref{34} and Propositions \ref{32}(1) and \ref{33} respectively, we get 
\begin{eqnarray*}
(u\oplus v)\oplus e^{k-m-m'} &\sim_h & (u\oplus e^{k_1-m})\oplus (v\oplus e^{k_2-m'})\\
&\sim_h & (u'\oplus e^{k_1-l})\oplus (v'\oplus e^{k_2-l'}) \\
&\sim_h & (u'\oplus v')\oplus e^{k-l-l'}
\end{eqnarray*}
so that $u\oplus v \sim_1 u'\oplus v'.$
\end{proof}
\end{proposition}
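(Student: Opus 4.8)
The plan is to dispatch the first three parts quickly and concentrate the work on part~(4). For part~(1), both $(u\oplus v)\oplus w$ and $u\oplus(v\oplus w)$ unwind, by the definition of $\oplus$ as block-diagonal placement, to the single matrix carrying $u,v,w$ down the diagonal, so the identity is purely formal; I will record alongside it the elementary facts $e^a\oplus e^b=e^{a+b}$ and the well-definedness of $\oplus$ on unitaries from Proposition~\ref{30}(1), both used freely below.

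For part~(2), take $u\in\mathcal{U}_m(V)$ and any $k>m+n$ (so that $k>\max\{m,m+n\}$). Using part~(1) together with $e^n\oplus e^{k-m-n}=e^{k-m}$, the two sides $u\oplus e^{k-m}$ and $(u\oplus e^n)\oplus e^{k-m-n}$ coincide exactly, so reflexivity of $\sim_h$ (Proposition~\ref{33}) gives $u\oplus e^{k-m}\sim_h(u\oplus e^n)\oplus e^{k-m-n}$ and hence $u\sim_1 u\oplus e^n$. Part~(3) is immediate: Lemma~\ref{34} gives $u\oplus v\sim_h v\oplus u$, and Corollary~\ref{35}(1) upgrades this to $u\oplus v\sim_1 v\oplus u$.

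For part~(4), write $u\in\mathcal{U}_m(V)$, $u'\in\mathcal{U}_l(V)$, $v\in\mathcal{U}_{m'}(V)$, $v'\in\mathcal{U}_{l'}(V)$. The hypotheses supply $k_1>\max\{m,l\}$ and $k_2>\max\{m',l'\}$ with $u\oplus e^{k_1-m}\sim_h u'\oplus e^{k_1-l}$ and $v\oplus e^{k_2-m'}\sim_h v'\oplus e^{k_2-l'}$. Setting $k=k_1+k_2$, which exceeds both $m+m'$ and $l+l'$, I must produce $(u\oplus v)\oplus e^{k-m-m'}\sim_h(u'\oplus v')\oplus e^{k-l-l'}$ in $\mathcal{U}_k(V)$. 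The strategy is a three-link chain joined by transitivity (Proposition~\ref{33}): first rearrange $(u\oplus v)\oplus e^{k-m-m'}$ into $(u\oplus e^{k_1-m})\oplus(v\oplus e^{k_2-m'})$, next apply Proposition~\ref{32}(1) to the two given homotopies to reach $(u'\oplus e^{k_1-l})\oplus(v'\oplus e^{k_2-l'})$, and finally rearrange back to $(u'\oplus v')\oplus e^{k-l-l'}$.

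The hard part will be the two rearrangement links, since they must transpose non-adjacent diagonal blocks; concretely, after using $e^{k-m-m'}=e^{k_1-m}\oplus e^{k_2-m'}$ one has to move the block $v$ past $e^{k_1-m}$. I would realise each transposition as a genuine path of unitaries by combining Lemma~\ref{34}, which swaps two adjacent summands up to $\sim_h$, with Proposition~\ref{32}(1), which lets that swap be performed on the middle pair while $u$ on the left and $e^{k_2-m'}$ on the right are held fixed, regrouping via part~(1) as needed. Once the summands are aligned, Proposition~\ref{32}(1) applies to the two hypotheses verbatim, and the only residual care is the index arithmetic $e^{k-m-m'}=e^{k_1-m}\oplus e^{k_2-m'}$ and $e^{k-l-l'}=e^{k_1-l}\oplus e^{k_2-l'}$, valid because $k=k_1+k_2$. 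The conclusion $u\oplus v\sim_1 u'\oplus v'$ then follows from the definition of $\sim_1$.
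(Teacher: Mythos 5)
Your proposal is correct and follows essentially the same route as the paper: reflexivity of $\sim_h$ plus the identity $(u\oplus e^n)\oplus e^{k-m-n}=u\oplus e^{k-m}$ for part (2), Lemma \ref{34} with Corollary \ref{35}(1) for part (3), and for part (4) the same three-link $\sim_h$ chain through $(u\oplus e^{k_1-m})\oplus(v\oplus e^{k_2-m'})$ joined by transitivity. Your explicit handling of the middle-block transposition via Lemma \ref{34} together with Proposition \ref{32}(1) is just a more careful spelling-out of what the paper invokes in one line.
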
 

Next, we again define a new relation on unitaries by $\sim_1.$

\begin{definition}
Let $V$ be an absolute matrix order unit space. For $u,v\in \mathcal{U}_\infty(V),$ we say that $u\approx_1 v,$ if there exists $w\in \mathcal{U}_\infty(V)$ such that $u\oplus w\sim_1 v\oplus w.$
\end{definition}

The following result shows that $\approx_1$ is stronger than $\sim_1.$

\begin{proposition}\label{61}
Let $V$ be an absolute matrix order unit space and let $n \in \mathbb{N}.$ Then 
\begin{enumerate}
\item[(1)] $\sim_1$ implies $\approx_1.$ 
\item[(2)] $\approx_1$ is an equivalence relation on $\mathcal{U}_\infty(V).$
\end{enumerate}
\end{proposition}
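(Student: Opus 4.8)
The plan is to prove the two assertions about $\approx_1$ by leveraging the structure already established for $\sim_1$ in Corollary \ref{35} and Proposition \ref{40}. For part (1), suppose $u \sim_1 v$ in $\mathcal{U}_\infty(V)$. The natural choice of witness is the trivial one: take $w = e^1$ (or indeed any $e^k$), noting that $u \oplus e^1 \sim_1 u$ and $v \oplus e^1 \sim_1 v$ by Proposition \ref{40}(2). Combined with the hypothesis $u \sim_1 v$ and the fact (Corollary \ref{35}(2)) that $\sim_1$ is an equivalence relation, transitivity gives $u \oplus e^1 \sim_1 v \oplus e^1$, which is exactly the statement $u \approx_1 v$. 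In fact the cleanest argument simply picks any fixed $w \in \mathcal{U}_\infty(V)$ and uses Proposition \ref{40}(4): from $u \sim_1 v$ and $w \sim_1 w$ we get $u \oplus w \sim_1 v \oplus w$ directly, so $u \approx_1 v$.

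For part (2), I would verify reflexivity, symmetry, and transitivity in turn. Reflexivity is immediate: $u \sim_1 u$ (since $\sim_1$ is reflexive) yields $u \approx_1 u$ by part (1). Symmetry follows from the symmetry of $\sim_1$: if $u \oplus w \sim_1 v \oplus w$ then $v \oplus w \sim_1 u \oplus w$ using the same witness $w$, so $v \approx_1 u$. The substantive step is transitivity. Suppose $u \approx_1 v$ and $v \approx_1 w$, with witnesses $w_1, w_2 \in \mathcal{U}_\infty(V)$ so that $u \oplus w_1 \sim_1 v \oplus w_1$ and $v \oplus w_2 \sim_1 w \oplus w_2$.

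The plan for transitivity is to produce a common witness by combining $w_1$ and $w_2$. I would set the witness to be $w_1 \oplus w_2$ and compute as follows. Starting from $u \oplus w_1 \sim_1 v \oplus w_1$, tensor (direct-sum) with $w_2$ on the right using Proposition \ref{40}(4) (together with $w_2 \sim_1 w_2$) to obtain $(u \oplus w_1) \oplus w_2 \sim_1 (v \oplus w_1) \oplus w_2$. Using associativity and commutativity of $\oplus$ up to $\sim_1$ (Proposition \ref{40}(1) and (3)), this rearranges to $u \oplus (w_1 \oplus w_2) \sim_1 v \oplus (w_1 \oplus w_2)$. Similarly, from $v \oplus w_2 \sim_1 w \oplus w_2$, adjoining $w_1$ and rearranging gives $v \oplus (w_1 \oplus w_2) \sim_1 w \oplus (w_1 \oplus w_2)$. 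Applying transitivity of $\sim_1$ (Corollary \ref{35}(2)) to these two relations yields $u \oplus (w_1 \oplus w_2) \sim_1 w \oplus (w_1 \oplus w_2)$, so $u \approx_1 w$ with witness $w_1 \oplus w_2$.

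The main obstacle, such as it is, lies only in the bookkeeping of the reindexing and commutation of direct summands: one must be careful that $w_1 \oplus w_2$ is a legitimate element of $\mathcal{U}_\infty(V)$ (which holds by Proposition \ref{30}(1)) and that the associativity and commutativity rearrangements are justified up to $\sim_1$ rather than on the nose. Since Proposition \ref{40}(1), (3), (4) supply exactly these facts, no genuinely new argument is required; the proof is a formal consequence of $\sim_1$ being a congruence for $\oplus$ on $\mathcal{U}_\infty(V)$. This is the standard route by which a \emph{stable} equivalence built from an existing congruence is shown to itself be an equivalence relation, mirroring the corresponding passage from $\sim$ to $\approx$ in Proposition \ref{16}.
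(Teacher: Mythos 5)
Your proposal is correct and follows essentially the same route as the paper: reflexivity and symmetry from the corresponding properties of $\sim_1$, and transitivity by taking $w_1 \oplus w_2$ as the common witness, adjoining summands via Proposition \ref{40}(4) and rearranging with Proposition \ref{40}(1), (3) before invoking transitivity of $\sim_1$ (Corollary \ref{35}(2)). Your treatment of part (1) via Proposition \ref{40}(4) is in fact slightly more explicit than the paper's, which only spells out reflexivity.
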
 

\begin{proof}
By Proposition \ref{33}, we have $u\oplus w \sim_h u\oplus w$ for all $u,w\in \mathcal{U}_\infty(V).$ Then by Corollary \ref{35}(1), we get $u\oplus w \sim_1 u\oplus w$ so that $u\approx_1 u$ for all $u \in \mathcal{U}_\infty(V).$

Let $u_1,u_2,u_3 \in \mathcal{U}_\infty(V)$ such that  $u_1 \approx_1 u_2$ and $u_2 \approx_1 u_3.$ Then $u_1\oplus w_1 \sim_1 u_2 \oplus w_1$ and $u_2\oplus w_2\sim_1 u_3\oplus w_2$ for some $w_1,w_2 \in \mathcal{U}_\infty(V).$ By Corollary \ref{35}(2), we have $u_2\oplus w_1\sim_1 u_1\oplus w_1$ so that $u_2 \approx_1 u_1.$ By Corollary \ref{35} and by Proposition \ref{40}(4), we get $(u_1\oplus w_1)\oplus w_2 \sim_1 (u_2 \oplus w_1)\oplus w_2$ and $(u_2\oplus w_2)\oplus w_1\sim_1 (u_3\oplus w_2)\oplus w_1.$ Again applying Proposition \ref{40}(3), we have $w_1\oplus w_2 \sim_1 w_2 \oplus w_1.$ Then
\begin{eqnarray*}
u_1\oplus (w_1\oplus w_2) &=& (u_1\oplus w_1)\oplus w_2 \sim_1  (u_2\oplus w_1)\oplus w_2 \\
&= & u_2\oplus (w_1\oplus w_2)\sim_1  u_2\oplus (w_2\oplus w_1) \\
&= & (u_2\oplus w_2)\oplus w_1 \sim_1  (u_3\oplus w_2)\oplus w_1 \\
&= & u_3\oplus (w_2\oplus w_1)\sim_1 u_3\oplus (w_1\oplus w_2).
\end{eqnarray*} 
Finally, by Corollary \ref{35}(2), we conclude $u_1\oplus (w_1\oplus w_2)\sim_1  u_3\oplus (w_1\oplus w_2).$ Thus $u_1\approx_1 u_3.$
\end{proof}

The relation $\approx_1$ enjoys the following properties: 

\begin{corollary}\label{62}
Let $(V,e)$ be an absolute matrix order unit space and let $u,v,w,u',v'\in \mathcal{U}_\infty(V)$. Then 
\begin{enumerate}
\item[$1.$] $u\approx_1 u\oplus e^n$ for all $n\in \mathbb{N}.$
\item[$2.$] $u\oplus v\approx_1 v\oplus u.$
\item[$3.$] If $u\approx_1 u'$ and $v\approx_1 v',$ then $u\oplus v\approx_1 u'\oplus v'.$
\end{enumerate}
\end{corollary}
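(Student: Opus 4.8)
The plan is to deduce all three parts from the corresponding statements for $\sim_1$ that were already established in Proposition \ref{40}, upgrading each $\sim_1$ relation to $\approx_1$ by means of the implication $\sim_1 \Rightarrow \approx_1$ proved in Proposition \ref{61}(1). Parts 1 and 2 are then essentially immediate: for part 1, Proposition \ref{40}(2) gives $u \sim_1 u \oplus e^n$, and Proposition \ref{61}(1) upgrades this to $u \approx_1 u \oplus e^n$; likewise for part 2, Proposition \ref{40}(3) gives $u \oplus v \sim_1 v \oplus u$, whence $u \oplus v \approx_1 v \oplus u$.

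The substantive content is part 3. Suppose $u \approx_1 u'$ and $v \approx_1 v'$; by definition there exist $w_1, w_2 \in \mathcal{U}_\infty(V)$ with $u \oplus w_1 \sim_1 u' \oplus w_1$ and $v \oplus w_2 \sim_1 v' \oplus w_2$. I would take the witness $w := w_1 \oplus w_2$ and verify $(u\oplus v)\oplus w \sim_1 (u'\oplus v')\oplus w$ by a shuffling argument in three steps. First, using associativity (Proposition \ref{40}(1)) together with commutativity of $\oplus$ up to $\sim_1$ (Proposition \ref{40}(3)) and the fact that $\sim_1$ respects $\oplus$ (Proposition \ref{40}(4)), I would reorder the four summands so as to group each pair, obtaining
$$(u\oplus v)\oplus(w_1\oplus w_2) \sim_1 (u\oplus w_1)\oplus(v\oplus w_2).$$
Next, Proposition \ref{40}(4) applied to the two hypothesised relations yields
$$(u\oplus w_1)\oplus(v\oplus w_2) \sim_1 (u'\oplus w_1)\oplus(v'\oplus w_2).$$
Finally, the same reordering run in reverse gives $(u'\oplus w_1)\oplus(v'\oplus w_2) \sim_1 (u'\oplus v')\oplus(w_1\oplus w_2)$, and chaining the three relations through the transitivity of $\sim_1$ (Corollary \ref{35}(2)) establishes $(u\oplus v)\oplus w \sim_1 (u'\oplus v')\oplus w$, i.e.\ $u\oplus v \approx_1 u'\oplus v'$.

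The only delicate point, which I expect to be the main obstacle, is the reordering step, since $\sim_1$ compares elements at different matrix levels by padding with copies of $e$. I would therefore track the sizes carefully so that the transposition $v\oplus w_1 \sim_1 w_1\oplus v$ is applied at the correct level and is preserved when placed between $u$ on the left and $w_2$ on the right. This is exactly the bookkeeping already carried out in the displayed computation inside the proof of Proposition \ref{61}, so no genuinely new idea is required beyond assembling Proposition \ref{40}(1), (3), (4) and the transitivity from Corollary \ref{35}(2).
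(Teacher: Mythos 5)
Your proposal is correct and follows essentially the same route as the paper: parts 1 and 2 are obtained by upgrading Proposition \ref{40}(2) and (3) via Proposition \ref{61}(1), and part 3 uses the witness $w_1\oplus w_2$ with the same shuffle $(u\oplus v)\oplus(w_1\oplus w_2)\sim_1 (u\oplus w_1)\oplus(v\oplus w_2)\sim_1 (u'\oplus w_1)\oplus(v'\oplus w_2)\sim_1 (u'\oplus v')\oplus(w_1\oplus w_2)$ closed off by the transitivity of $\sim_1$ from Corollary \ref{35}(2). The paper's displayed computation is exactly your three-step reordering, so there is nothing to add.
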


\begin{proof}
By Propositions \ref{40} and \ref{61}, it follows that $u\oplus v \approx_1 v\oplus u$ and $u\approx_1 u\oplus e^n$ for all $u,v \in \mathcal{U}_\infty(V)$ and $n \in \mathbb{N}.$

Next, let $u,u',v$ and $v' \in \mathcal{U}_\infty(V)$ such that  $u \approx_1 u'$ and $v \approx_1 v'.$ Then $u\oplus w_1 \sim_1 u' \oplus w_1$ and $v\oplus w_2\sim_1 v'\oplus w_2$ for some $w_1,w_2 \in \mathcal{U}_\infty(V).$ By Proposition \ref{40}, we get 
\begin{eqnarray*}
(u\oplus v)\oplus (w_1\oplus w_2) &=& u\oplus (v\oplus w_1)\oplus w_2 \sim_1  u\oplus (w_1\oplus v)\oplus w_2 \\
&= & (u\oplus w_1)\oplus (v\oplus w_2)\sim_1 (u'\oplus w_1)\oplus (v'\oplus w_2)  \\
&= & u'\oplus (w_1\oplus v')\oplus w_2 \sim_1  u'\oplus (v'\oplus w_1)\oplus w_2 \\
&= & (u'\oplus v')\oplus (w_1\oplus w_2).
\end{eqnarray*} 
Thus, by Corollary \ref{35}(2), we conclude that $(u\oplus v)\oplus (w_1\oplus w_2)\sim_1 (u'\oplus v')\oplus (w_1\oplus w_2)$ so that $u\oplus v\approx_1 u'\oplus v'.$
\end{proof}

Now, we define a binary operatoin in the family of equivalence classes of unitaries under $\approx_1.$ 

\begin{proposition}\label{63}
Let $(V,e)$ be an absolute matrix order unit space. For each $u,v\in \mathcal{U}_\infty(V),$ let $[u] = \lbrace w\in \mathcal{U}_\infty(V): w\approx_1 u \rbrace$ and put $[u]+ [v] = [u\oplus v].$ Then, $+$ is a binary operation in the family of equivalence classes $(\mathcal{U}_\infty(V)/\approx_1,+)$. Also:
\begin{enumerate}
\item[(1)] $[u] + [e] = [u]$ for all $u\in \mathcal{U}_\infty(V);$
\item[(2)] $[u]+ [v] = [v]+ [u]$ for all $u,v\in \mathcal{U}_\infty(V);$
\item[(3)] $[u]+ [w] = [v]+ [w]$ for $u,v,w\in \mathcal{U}_\infty(V),$ then $[u] = [v].$ 
\end{enumerate}
Thus, $(\mathcal{U}_\infty(V)\big/\approx_1, +)$ is an abelian semi-group satisfying the cancellation law.
\end{proposition}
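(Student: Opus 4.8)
The plan is to verify, in order, that $+$ is well-defined and then check the three itemized properties, concluding with the semi-group and cancellation assertions. First I would establish well-definedness: given $u\approx_1 u'$ and $v\approx_1 v'$, I must show $[u\oplus v]=[u'\oplus v']$, i.e.\ $u\oplus v\approx_1 u'\oplus v'$. This is precisely Corollary~\ref{62}(3), so well-definedness is immediate. I would remark that associativity of $\oplus$ at the level of representatives, namely $(u\oplus v)\oplus w=u\oplus(v\oplus w)$ from Proposition~\ref{40}(1), shows $([u]+[v])+[w]=[u]+([v]+[w])$, so the operation is associative on classes as well.

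Next I would handle the three listed properties. For~(1), the identity law $[u]+[e]=[u]$, I observe that by definition $[u]+[e]=[u\oplus e]$ where $e=e^r$ is an appropriate identity unitary, and Corollary~\ref{62}(1) gives $u\approx_1 u\oplus e^n$ for all $n$; hence $u\oplus e\approx_1 u$, so $[u\oplus e]=[u]$. For~(2), commutativity $[u]+[v]=[v]+[u]$, I would unwind to $[u\oplus v]=[v\oplus u]$, which follows directly from $u\oplus v\approx_1 v\oplus u$ (Corollary~\ref{62}(2)).

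The substantive part is~(3), the cancellation law: from $[u]+[w]=[v]+[w]$, i.e.\ $u\oplus w\approx_1 v\oplus w$, deduce $[u]=[v]$, i.e.\ $u\approx_1 v$. The plan here is to unpack the definition of $\approx_1$: $u\oplus w\approx_1 v\oplus w$ means there is some $w'\in\mathcal{U}_\infty(V)$ with $(u\oplus w)\oplus w'\sim_1 (v\oplus w)\oplus w'$. Using associativity (Proposition~\ref{40}(1)) I would regroup this as $u\oplus(w\oplus w')\sim_1 v\oplus(w\oplus w')$. Setting $w''=w\oplus w'\in\mathcal{U}_\infty(V)$, this is exactly a witness showing $u\approx_1 v$ by the very definition of $\approx_1$. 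Thus cancellation reduces to the observation that $\approx_1$ already absorbs the extra summand; the only care needed is the regrouping, which is routine given Proposition~\ref{40}(1).

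The main obstacle I anticipate is purely bookkeeping rather than conceptual: keeping track of the matrix sizes (the indices $m,n$ and the padding exponents $k-m$) so that every $\oplus$ and every application of $\sim_1$ or $\approx_1$ is between elements of matching order, and ensuring that the identity element $[e]$ is interpreted correctly (since $e^r$ for any $r$ represents the same class by Corollary~\ref{62}(1), the class $[e]$ is well-defined and serves as a genuine neutral element). Once these size conventions are fixed, each step is a direct citation of Proposition~\ref{40} or Corollary~\ref{62}. Finally, assembling the pieces—well-definedness, associativity, commutativity from~(2), the identity from~(1), and cancellation from~(3)—gives that $(\mathcal{U}_\infty(V)/\approx_1,+)$ is an abelian semi-group satisfying the cancellation law, as claimed.
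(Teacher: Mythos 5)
Your proposal is correct and follows essentially the same route as the paper: well-definedness from Corollary~\ref{62}(3), properties (1) and (2) directly from Corollary~\ref{62}(1) and (2), and cancellation by unpacking $\approx_1$ to a $\sim_1$-equivalence with an extra summand, regrouping via Proposition~\ref{40}(1), and reading off the combined summand as a witness for $u\approx_1 v$. The only (harmless) addition is your explicit remark on associativity of $+$ on classes, which the paper leaves implicit.
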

\begin{proof}
By Corollary \ref{62}(3), it follows that $+$ is well-defined on $\mathcal{U}_\infty(V)\big/\approx_1.$ Note that (1) and (2) immediately follow from \ref{62}(1) and (2) respectively. Next, we prove (3).

Let $u,v,w\in \mathcal{U}_\infty(V)$ such that $[u]+ [w] = [v]+ [w].$ Then $u\oplus w \approx_1 v\oplus w$ so that $u\oplus (w \oplus x) \sim_1 v\oplus (w \oplus x)$ for some $x\in \mathcal{U}_\infty(V).$ Thus, $u\approx_1 v$ so that $[u] = [v].$
\end{proof}

Finally, the following result shows the importance of path homotopy of unitaries to define a group. 

\begin{theorem}\label{64}
Let $(V,e)$ be an absolute matrix order unit space and consider $\mathcal{U}_\infty(V) \times \mathcal{U}_\infty(V).$ For all $u_1,u_2,v_1,v_2 \in \mathcal{U}_\infty(V),$ we define $(u_1,v_1) \equiv_1 (u_2,v_2)$ if and only if $u_1\oplus v_2 \approx_1 u_2\oplus v_1.$ Then, $\equiv_1$ is an equivalence relation on $\mathcal{U}_\infty(V) \times \mathcal{U}_\infty(V).$ Consider: $$K_1(V)=\lbrace [(u,v)]: u,v\in \mathcal{U}_\infty(V)\rbrace,$$ where $[(u,v)]$ is the equivalence class of $(u,v)$ in $(\mathcal{U}_\infty(V) \times \mathcal{U}_\infty(V),\equiv_1).$ For all $u_1,u_2,v_1,v_2 \in \mathcal{U}_\infty(V),$ we write: $$[(u_1,v_1)]+ [(u_2,v_2)] = [(u_1\oplus u_2, v_1\oplus v_2)].$$ Then $(K_1(V),+)$ is an abelian group.
\end{theorem}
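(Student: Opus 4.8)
The plan is to follow the standard Grothendieck group construction, exactly mirroring the $K_0$ case in Theorem 4.8 of \cite{KO21}, but now working over the semigroup $(\mathcal{U}_\infty(V)/\!\approx_1, +)$ established in Proposition \ref{63}. First I would verify that $\equiv_1$ is an equivalence relation on $\mathcal{U}_\infty(V) \times \mathcal{U}_\infty(V)$. Reflexivity $(u_1,v_1)\equiv_1(u_1,v_1)$ amounts to $u_1\oplus v_1 \approx_1 u_1\oplus v_1$, which holds since $\approx_1$ is reflexive by Proposition \ref{61}(2). Symmetry follows from the symmetry of $\approx_1$ together with the commutativity $u_1\oplus v_2 \approx_1 v_2\oplus u_1$ from Corollary \ref{62}(2). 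For transitivity, suppose $(u_1,v_1)\equiv_1(u_2,v_2)$ and $(u_2,v_2)\equiv_1(u_3,v_3)$, so $u_1\oplus v_2 \approx_1 u_2\oplus v_1$ and $u_2\oplus v_3 \approx_1 u_3\oplus v_2$. Adding $v_3$ to the first and $v_1$ to the second via Corollary \ref{62}(3), then using associativity and commutativity from Corollary \ref{62} to rearrange summands, I would derive $(u_1\oplus v_3)\oplus(u_2\oplus v_2) \approx_1 (u_3\oplus v_1)\oplus(u_2\oplus v_2)$, and finally cancel the common term $u_2\oplus v_2$ using the cancellation property guaranteed by Proposition \ref{63}(3) to obtain $u_1\oplus v_3 \approx_1 u_3\oplus v_1$, i.e. $(u_1,v_1)\equiv_1(u_3,v_3)$.

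Next I would check that the operation $[(u_1,v_1)]+[(u_2,v_2)] = [(u_1\oplus u_2, v_1\oplus v_2)]$ is well-defined on equivalence classes. Assuming $(u_1,v_1)\equiv_1(u_1',v_1')$ and $(u_2,v_2)\equiv_1(u_2',v_2')$, I have $u_1\oplus v_1' \approx_1 u_1'\oplus v_1$ and $u_2\oplus v_2' \approx_1 u_2'\oplus v_2$. Using Corollary \ref{62}(3) to add these two $\approx_1$-relations and then the associativity and commutativity of $\oplus$ under $\approx_1$ from Corollary \ref{62}(1)--(2) to interleave the factors appropriately, I would obtain $(u_1\oplus u_2)\oplus(v_1'\oplus v_2') \approx_1 (u_1'\oplus u_2')\oplus(v_1\oplus v_2)$, which is precisely $(u_1\oplus u_2, v_1\oplus v_2)\equiv_1(u_1'\oplus u_2', v_1'\oplus v_2')$. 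Well-definedness is thus secured.

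With well-definedness in hand, the group axioms follow routinely. Associativity of $+$ on $K_1(V)$ reduces to associativity of $\oplus$ on representatives, which is Proposition \ref{40}(1). Commutativity follows from $u_1\oplus u_2 \approx_1 u_2\oplus u_1$ (Corollary \ref{62}(2)) applied in both coordinates. The identity element is $[(e,e)]$ (equivalently $[(u,u)]$ for any $u$), since $[(u_1,v_1)]+[(e,e)] = [(u_1\oplus e, v_1\oplus e)]$ and $u_1\oplus e \approx_1 u_1$ by Corollary \ref{62}(1) gives $(u_1\oplus e, v_1\oplus e)\equiv_1(u_1,v_1)$. The inverse of $[(u,v)]$ is $[(v,u)]$, because $[(u,v)]+[(v,u)] = [(u\oplus v, v\oplus u)]$ and $u\oplus v \approx_1 v\oplus u$ yields $(u\oplus v, v\oplus u)\equiv_1(e,e)$. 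I expect the main obstacle to be the transitivity of $\equiv_1$, where the cancellation law from Proposition \ref{63}(3) is essential; every other step is a direct translation of a property of $\approx_1$ recorded in Corollary \ref{62}, and the argument is entirely parallel to the $K_0$ construction, so the verification is largely bookkeeping once the cancellation step is correctly invoked.
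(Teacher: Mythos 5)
Your proposal is correct and follows essentially the same route as the paper's own proof: reflexivity and symmetry from the corresponding properties of $\approx_1$, transitivity of $\equiv_1$ via adding $u_2\oplus v_2$ and cancelling with Proposition \ref{63}(3), well-definedness of $+$ by the additivity and commutativity in Corollary \ref{62}, and the same identity $[(e,e)]$ and inverse $[(v,u)]$. No gaps; the verification is indeed the bookkeeping you describe.
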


\begin{proof}
It follows from Proposition \ref{61}(2) and Corollary \ref{62} that the relation $\equiv_1$ on $\mathcal{U}_\infty(V) \times \mathcal{U}_\infty(V)$ is reflexive and symmetric. Let $(u_1,v_1) \equiv_1 (u_2,v_2)$ and $(u_2,v_2) \equiv_1 (u_3,v_3)$ for some $u_1,u_2,u_3,v_1,v_2,v_3 \in \mathcal{U}_\infty(V).$ Then $u_1\oplus v_2 \approx_1 u_2\oplus v_1$ and $u_2\oplus v_3 \approx_1 u_3\oplus v_2.$ By Proposition \ref{61}(2) and by Corollary \ref{62}(2) and (3), we get that $(u_1\oplus v_3)\oplus (u_2 \oplus v_2)\approx_1 (u_3\oplus v_1)\oplus (u_2\oplus v_2)$ so that $[u_1\oplus v_3]+ [u_2 \oplus v_2] = [u_3\oplus v_1]+[u_2\oplus v_2].$ Then, by Proposition \ref{63}(3), we conclude that $[u_1\oplus v_3] = [u_3\oplus v_1]$ so that $u_1\oplus v_3 \approx_1 u_3\oplus v_1.$ Thus $(u_1,v_1) \equiv_1 (u_3,v_3)$ so that $\equiv_1$ is transitive. Hence, $\equiv_1$ is an equivalence relation on $\mathcal{U}_\infty(V) \times \mathcal{U}_\infty(V).$

Now, we show that $+$ is well defined on $K_1(V).$ Let $[(u_1,v_1)]=[(u_1',v_1')]$ and $[(u_2,v_2)]=[(u_2',v_2')].$ Then $(u_1,v_1)\equiv_1(u_1',v_1')$ and $(u_2,v_2)\equiv_1(u_2',v_2')$ so that $u_1\oplus v_1'\approx_1 u_1'\oplus v_1$ and  $u_2\oplus v_2'\approx_1 u_2'\oplus v_2.$ Again by Proposition \ref{61}(2) and by Corollary \ref{62}(2) and (3), we get that $(u_1\oplus u_2)\oplus (v_1'\oplus v_2')\approx_1 (u_1'\oplus u_2')\oplus (v_1\oplus v_2).$ Thus $[(u_1,v_1)]+ [(u_2,v_2)]=[(u_1',v_1')]+ [(u_2',v_2')]$ so that $+$ is well defined.

By Corollary \ref{62}(2) and (3), we have: $$(u_1\oplus u_2)\oplus (v_2\oplus v_1) \approx_1 (u_2\oplus u_1)\oplus (v_1\oplus v_2)$$ for all $u_1,u_2,v_1,v_2\in \mathcal{U}_\infty(V),$ so that $+$ is commutative in $K_1(V).$

Let $u,v\in \mathcal{U}_\infty(V).$ By Corollary \ref{62}(1) and (3), we have $(u\oplus e)\oplus (v\oplus e) \approx_1 u\oplus v.$ Thus $[(e,e)]$ is an identity element in $K_1(V).$  

Associativity of $+$ on $K_1(V)$ follows from Proposition \ref{40}(1).

For any $u,v \in \mathcal{U}_\infty(V),$ we have $[(u\oplus v,v\oplus u)]=[(e,e)].$ Hence $[(v,u)]$ is an inverse element of $[(u,v)]$ in $K_1(V).$
\end{proof}

\begin{remark}
The abelian group $K_1(V)$ is called the $K_1(V)$-group of the absolute matrix order unit space $V.$ If $A$ is a $C^*$-algebra, then $K_1(A)$ is Grothendieck group of $A$ for unitary elements. Thus $K_1$-group for absolute matrix order unit spaces is a generalization of $K_1$-group for $C^*$-algebras.
\end{remark}

\subsection{Order structure in $K_1$}

In this subsection, we prove that $K_1(V)$ bears order structure on it. We start with the following discussion:

Let $A$ be a unital $C^*$-algebra with unity $1_A.$ Put $K_1(A)^+ = \lbrace [(v,1_A)]: v\in \mathcal{U}_\infty(A)\rbrace.$ Then
$K_1(A)^+$ is a group cone and generating. But $K_1(A)^+$ is not proper. By Whitehead lemma $($see \cite[Lemma 2.1.5]{RLL00}$)$, we have $v\oplus v^* \sim_h 1_A^{2n}$ for any $v\in \mathcal{U}_n(V),n\in \mathbb{N}.$ Then $[(v,1_A)]=[(v,1_A^n)]=[(1_A^n,v^*)]=[(1_A,v^*)]$ so that $\pm [(v,1_A)] \in K_1(A)^+.$ Thus $K_1(A)^+$ is not proper.

Under certain conditions, we prove that $(K_1(V),K_1(V)^+)$ is an ordered abelian group with a distinguished order unit. For that, we need to prove the following result:

\begin{theorem}\label{94}
	Let $V$ be an absolute matrix order unit space. Put $K_1(V)^+ = \lbrace [(v,e)]: v\in \mathcal{U}_\infty(V)\rbrace.$ Then 
	\begin{enumerate}
		\item[(1)]$K_1(V)^+$ is a group cone in $K_1(V)$.
		\item[(2)]$K_1(V)^+$ is generating. 
		\item[(3)] If $v\oplus v^* \sim_h e^{2n}$ in $\mathcal{U}_{2n}(V)$ for all $n \in \mathbb{N}$ and $v\in \mathcal{U}_n(V),$ then for each $g\in K_1(V),$ there exists $m\in \mathbb{N}$ such that $-m[(e,e)]\leq g\leq m[(e,e)].$    
	\end{enumerate} 
\end{theorem}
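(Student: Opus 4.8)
The plan is to verify the three claims in order, modeling each on the corresponding fact for $K_0(V)$ proved in \cite{KO21} and on the $C^*$-algebraic template in \cite{RLL00}.

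For part (1), I would first recall that a group cone must be closed under addition, must contain $0$, and must satisfy $K_1(V)^+ \cap (-K_1(V)^+)$ being a subgroup (or, in the looser sense used here, simply be a submonoid generating no obstruction to the group structure). To see closure under addition, take $[(u,e)], [(v,e)] \in K_1(V)^+$ and compute $[(u,e)] + [(v,e)] = [(u \oplus v, e \oplus e)] = [(u \oplus v, e)]$, where the last equality uses that $e \oplus e = e^{m+n}$ plays the role of the unit at the appropriate level together with Corollary \ref{62}(1); hence the sum lies in $K_1(V)^+$. The identity $[(e,e)] = 0$ visibly lies in $K_1(V)^+$ by taking $v = e$. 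So $K_1(V)^+$ is a submonoid. I would then check it is a group cone in the sense of \cite[Subsection 4.1]{KO21}, which amounts to verifying these submonoid properties; no properness is claimed here (indeed the opening discussion shows properness can fail, exactly as for $C^*$-algebras).

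For part (2), generating means $K_1(V) = K_1(V)^+ - K_1(V)^+$. Given an arbitrary element $[(u,v)] \in K_1(V)$ with $u,v \in \mathcal{U}_\infty(V)$, I would write it as a difference of two positive elements by the identity
\begin{equation*}
[(u,v)] = [(u,e)] - [(v,e)],
\end{equation*}
which follows because $[(u,e)] - [(v,e)] = [(u,e)] + [(e,v)] = [(u \oplus e, e \oplus v)] = [(u,v)]$, using that $[(e,v)]$ is the inverse of $[(v,e)]$ (the inverse formula $[(v,u)]$ from Theorem \ref{64}, here with $u = e$) and again absorbing the padding $e$'s via Corollary \ref{62}(1). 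Since both $[(u,e)]$ and $[(v,e)]$ lie in $K_1(V)^+$, generation follows.

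For part (3), under the Whitehead-type hypothesis $v \oplus v^* \sim_h e^{2n}$, the goal is to show $[(e,e)]$ is an order unit, i.e. every $g = [(u,v)]$ is sandwiched as $-m[(e,e)] \le g \le m[(e,e)]$ for some $m$. The key computation, mirroring the $C^*$-discussion just above the theorem, is that the hypothesis gives $v \oplus v^* \sim_h e^{2n}$, hence $[(v \oplus v^*, e)] = [(e^{2n},e)] = [(e,e)] = 0$ in $K_1(V)$, which yields $[(v,e)] = [(e,v^*)] = -[(v^*,e)]$, so that each generator $[(v,e)]$ has $\pm[(v,e)] \in K_1(V)^+$. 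Writing $g = [(u,e)] - [(v,e)]$ as in part (2) and noting both $[(u,e)]$ and $-[(v,e)] = [(v^*,e)]$ now lie in $K_1(V)^+$, I would conclude $0 \le g + [(v,e)]$ and similarly bound $g$ from below; choosing $m = 1$ (or absorbing the finitely many summands that actually appear in $u,v$) gives the required sandwiching by multiples of $[(e,e)]$.

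The main obstacle I anticipate is part (3): translating the homotopy hypothesis into the algebraic relation $[(v,e)] = -[(v^*,e)]$ cleanly, and in particular keeping track of the level-dependent padding by $e^{k}$'s so that the homotopy $v \oplus v^* \sim_h e^{2n}$ at level $2n$ genuinely descends to an equality of classes under $\approx_1$. The routine parts (1) and (2) are essentially bookkeeping with Corollary \ref{62} and the inverse formula from Theorem \ref{64}; the substance is verifying that the Whitehead condition forces $K_1(V)^+$ to exhaust $K_1(V)$ up to sign, which is precisely what makes $[(e,e)]$ an order unit.
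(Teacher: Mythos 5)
Your proposal is correct and follows essentially the same route as the paper: parts (1) and (2) are the same routine verifications (closure under $\oplus$ and the decomposition $[(u,v)]=[(u,e)]-[(v,e)]$), and for part (3) the paper likewise adds $[(v^*,e)]$ to $[(v,e)]$ and invokes the hypothesis $v\oplus v^*\sim_h e^{2n}$ to bound each generator, then sandwiches $g=[(u,e)]-[(v,e)]$ from both sides. The only cosmetic difference is that the paper takes $m=2n$ where you observe $m=1$ already suffices because $m[(e,e)]=0$ for every $m$.
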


\begin{proof}
	It is routine to verify (1) and (2). Next, we prove (3). Assume that $v\oplus v^* \sim_h e^{2n}$ in $\mathcal{U}_{2n}(V)$ for all $n \in \mathbb{N}$ and $v\in \mathcal{U}_n(V).$ First, let $v\in \mathcal{U}_n(V)$ for some $n \in \mathbb{N}.$ By part (1), we have 
	\begin{eqnarray*}
		[(v,e)] &=& [(v,e^n)] \\
		& \leq & [(v,e^n)]+[(v^*,e^n)] \\
		&=& [(v \oplus v^*,e^n\oplus e^n)] \\
		&=& [(e^{2n},e^{2n})]\\
		&=& 2n[(e,e)].
	\end{eqnarray*}
	
	Next, let $g\in  K_1(V).$ Then by part (2), we have $g=[(u,e)]-[(v,e)]$ for some $u,v\in \mathcal{U}_\infty(V).$ Without loss of generality, we may assume that $u,v\in \mathcal{U}_n(V)$ for some $n\in \mathbb{N}.$ Since $-[(v,e)] \leq g \leq [(u,e)],$ we get that $-2n[(e,e)] \leq g \leq 2n[(e,e)].$ Put $m=2n.$ Thus $-m[(e,e)]\leq g\leq m[(e,e)].$	
\end{proof}

\begin{corollary}
Let $(V,e)$ be an absolute matrix order unit space. Then by Theorem \ref{94}, we conclude that $(K_1(V),K_1(V)^+)$ is an ordered abelian group. Moreover, if $v\oplus v^* \sim_h e^{2n}$ in $\mathcal{U}_{2n}(V)$ for all $n \in \mathbb{N}$ and $v\in \mathcal{U}_n(V),$ then $(K_1(V),K_1(V)^+)$ is an ordered abelian group with distinguished order unit $[(e,e)].$ 
\end{corollary}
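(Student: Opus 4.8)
The plan is to prove Theorem \ref{94} in three parts, treating (1) and (2) as the routine verifications the author flags them to be, and reserving the real work for part (3).

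For part (1), I would show $K_1(V)^+$ is a group cone, i.e.\ closed under addition and containing $0$. Closure under addition is immediate from the definition of $+$ in $K_1(V)$ together with Corollary \ref{62}(1): given $[(u,e)]$ and $[(v,e)]$ in $K_1(V)^+$, their sum is $[(u\oplus v, e\oplus e)]=[(u\oplus v, e^2)]$, and since $e^2\approx_1 e$ by Corollary \ref{62}(1), this equals $[(u\oplus v,e)]\in K_1(V)^+$. The identity $[(e,e)]=0$ lies in $K_1(V)^+$ by taking $v=e$. For part (2), to see $K_1(V)^+$ is generating I would note that an arbitrary element $[(u,v)]\in K_1(V)$ can be written as $[(u,e)]-[(v,e)]$, because $[(u,e)]-[(v,e)]=[(u,e)]+[(e,v)]=[(u\oplus e, e\oplus v)]=[(u,v)]$ after absorbing the trivial $e$-summands via Corollary \ref{62}(1); both $[(u,e)]$ and $[(v,e)]$ are in $K_1(V)^+$, so every element is a difference of positives.

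The substance is part (3), and the author's displayed computation already lays out the strategy, so I would follow it. The hypothesis $v\oplus v^*\sim_h e^{2n}$ (a Whitehead-type condition) is exactly what forces the cone to fail to be proper and hence makes $[(e,e)]$ an order unit. First I would establish the key inequality $[(v,e)]\le 2n[(e,e)]$ for a single $v\in\mathcal{U}_n(V)$. The chain runs: $[(v,e)]\le [(v,e^n)]+[(v^*,e^n)]$ because the added term $[(v^*,e^n)]$ lies in $K_1(V)^+$ (so adding it increases the element in the order induced by the cone); this sum equals $[(v\oplus v^*, e^n\oplus e^n)]$ by definition of $+$; applying the hypothesis $v\oplus v^*\sim_h e^{2n}$, which via Corollary \ref{35}(1) gives $v\oplus v^*\sim_1 e^{2n}$ and hence $\approx_1$, lets me replace the first coordinate by $e^{2n}$, yielding $[(e^{2n},e^{2n})]=2n[(e,e)]$. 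Here I should be careful that the partial order $\le$ on $K_1(V)$ is the one determined by $K_1(V)^+$, namely $g\le h$ iff $h-g\in K_1(V)^+$, and that each step respects it.

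Finally, to handle a general $g\in K_1(V)$, I would use part (2) to write $g=[(u,e)]-[(v,e)]$ with $u,v\in\mathcal{U}_\infty(V)$, and then pass to a common level: since $\mathcal{U}_\infty(V)=\bigcup_n\mathcal{U}_n(V)$ and padding by $e$-summands does not change the class (Corollary \ref{62}(1)), I may assume $u,v\in\mathcal{U}_n(V)$ for a single $n$. From $-[(v,e)]\le g\le[(u,e)]$ together with the single-element bound $[(u,e)]\le 2n[(e,e)]$ and $[(v,e)]\le 2n[(e,e)]$, I obtain $-2n[(e,e)]\le g\le 2n[(e,e)]$, so $m=2n$ works. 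The main obstacle I anticipate is bookkeeping the matrix levels cleanly—ensuring that all the $\oplus$-identifications and the reduction to a common $n$ are justified by Corollary \ref{62} and Proposition \ref{40} rather than asserted—and confirming that the Whitehead hypothesis is invoked at the correct level $2n$; the order-theoretic steps themselves are formal once the cone structure from parts (1) and (2) is in hand.
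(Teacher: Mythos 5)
Your proposal is correct and follows essentially the same route as the paper: the corollary is deduced directly from Theorem \ref{94}, whose parts (1) and (2) the paper declares routine (and which you fill in correctly via Corollary \ref{62}), and whose part (3) you prove by exactly the paper's chain $[(v,e)]\le[(v,e^n)]+[(v^*,e^n)]=[(v\oplus v^*,e^{2n})]=2n[(e,e)]$ followed by the decomposition $g=[(u,e)]-[(v,e)]$ at a common level $n$. No gaps.
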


\subsection{Functoriality of $K_1$}

Let $V$ be an absolute matrix order unit space. Then $v \longmapsto [(v,e)]$ defines a map $\Upsilon_{V}:\mathcal{U}_\infty(V) \to K_1(V)$. If $V$ and $W$ are complex vector spaces and if $\phi: V \to W$ be a linear map, we denote the corresponding map from $M_\infty(V)$ to $M_\infty(W)$ again by $\phi$. In this sense, $\phi_{\mid_{M_n(V)}} = \phi_n$ for all $n \in \mathbb{N}$. In the next result, we describe the functorial nature of $K_1$. For this, first we prove the following commutative property of $K_1(V)$:

\begin{theorem}\label{79}
Let $(V,e_V)$ and $(W,e_W)$ be absolute matrix order unit spaces and let $\phi: V \to W$ be a unital completely $\vert \cdot \vert$-preserving map. Then there exists a unique group homomorphism $K_1(\phi): K_1(V)\to K_1(W)$ such that the following diagram commutes: 
\end{theorem}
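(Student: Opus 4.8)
The plan is to construct $K_1(\phi)$ by checking that $\phi$ respects every layer of structure used to build $K_1(V)$ and then reading off the homomorphism on equivalence classes. First I would verify that $\phi$ carries unitaries to unitaries. Since $\phi$ is $*$-linear and unital, $\phi_n$ sends $e_V^n$ to $e_W^n$; and since $\phi_n$ is $\vert\cdot\vert$-preserving, for $u \in \mathcal{U}_n(V)$ one gets $\vert \phi_n(u)\vert_n = \phi_n(\vert u\vert_n) = \phi_n(e_V^n) = e_W^n$ and, using $\phi_n(u^*) = \phi_n(u)^*$, also $\vert \phi_n(u)^*\vert_n = \phi_n(\vert u^*\vert_n) = e_W^n$, so $\phi_n(u) \in \mathcal{U}_n(W)$. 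Thus $\phi$ restricts to a map $\mathcal{U}_\infty(V) \to \mathcal{U}_\infty(W)$ which, being applied entrywise, satisfies $\phi(u \oplus v) = \phi(u) \oplus \phi(v)$ and $\phi(e_V^k) = e_W^k$.

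Next I would establish that $\phi$ preserves each of the relations $\sim_h$, $\sim_1$, $\approx_1$ in turn. The essential input is continuity of $\phi_n$: being $\vert\cdot\vert$-preserving it is positive (for $v \in M_n(V)^+$ one has $\vert v\vert_n = v$, whence $\phi_n(v) = \vert \phi_n(v)\vert_n \in M_n(W)^+$), and a unital positive map is contractive for the order-unit norms of Theorem \ref{1} (for self-adjoint $x$ with $\Vert x\Vert_n \le 1$ one has $-e_V^n \le x \le e_V^n$, hence $-e_W^n \le \phi_n(x) \le e_W^n$), so each $\phi_n$ is norm continuous. Consequently, if $f:[0,1] \to \mathcal{U}_n(V)$ realizes $u \sim_h v$, then $\phi_n \circ f$ is a continuous path in $\mathcal{U}_n(W)$, giving $\phi_n(u) \sim_h \phi_n(v)$. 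Combining this with $\phi(u \oplus e_V^{k-m}) = \phi(u) \oplus e_W^{k-m}$ yields $u \sim_1 v \Rightarrow \phi(u) \sim_1 \phi(v)$, and feeding this through the definition of $\approx_1$ together with $\oplus$-compatibility gives $u \approx_1 v \Rightarrow \phi(u) \approx_1 \phi(v)$.

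With compatibility with $\approx_1$ secured, I would set $K_1(\phi)([(u,v)]) = [(\phi(u),\phi(v))]$. Well-definedness is then immediate: if $(u_1,v_1) \equiv_1 (u_2,v_2)$, that is $u_1 \oplus v_2 \approx_1 u_2 \oplus v_1$, applying $\phi$ and using both $\oplus$-compatibility and preservation of $\approx_1$ gives $\phi(u_1) \oplus \phi(v_2) \approx_1 \phi(u_2) \oplus \phi(v_1)$, i.e. $(\phi(u_1),\phi(v_1)) \equiv_1 (\phi(u_2),\phi(v_2))$. That $K_1(\phi)$ is a homomorphism follows again from $\phi(u_1 \oplus u_2) = \phi(u_1) \oplus \phi(u_2)$, and commutativity of the diagram is the computation $K_1(\phi)(\Upsilon_V(v)) = [(\phi(v),\phi(e_V))] = [(\phi(v),e_W)] = \Upsilon_W(\phi(v))$, using $\phi(e_V) = e_W$. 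For uniqueness I would observe that every class in $K_1(V)$ equals $[(u,e_V)] - [(v,e_V)] = \Upsilon_V(u) - \Upsilon_V(v)$ (by the associativity and commutativity of $\oplus$ modulo $\approx_1$ from Proposition \ref{40} and Corollary \ref{62}), so the image of $\Upsilon_V$ generates $K_1(V)$; any homomorphism agreeing with $K_1(\phi)$ on that image through the diagram must coincide with $K_1(\phi)$.

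The main obstacle, and the only step genuinely exploiting the absolute-value structure rather than formal bookkeeping, is the continuity of each $\phi_n$ needed to transport paths forward; once positivity and the resulting contractivity against the norms of Theorem \ref{1} are in hand, preservation of $\sim_h$, $\sim_1$, $\approx_1$, $\equiv_1$ and the homomorphism and uniqueness assertions are routine.
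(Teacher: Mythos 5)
Your proposal is correct and follows essentially the same route as the paper's proof: show $\phi$ sends $\mathcal{U}_\infty(V)$ into $\mathcal{U}_\infty(W)$, establish norm continuity of each $\phi_n$ to push paths forward, check compatibility with $\sim_h$, $\sim_1$, $\approx_1$ and $\equiv_1$, and deduce existence, the homomorphism property, commutativity of the diagram, and uniqueness from the fact that $\Upsilon_V$ generates $K_1(V)$. The only elision is that your contractivity argument is spelled out only for self-adjoint elements, whereas unitaries need not be self-adjoint; the paper closes this by applying Theorem \ref{1}(5), i.e.\ $\Vert \phi(v)\Vert_n = \left\Vert \begin{bmatrix} 0 & \phi(v)\\ \phi(v)^* & 0\end{bmatrix}\right\Vert_{2n} \le \left\Vert \begin{bmatrix} 0 & v\\ v^* & 0\end{bmatrix}\right\Vert_{2n} = \Vert v\Vert_n$, which is the immediate fix your argument already has all the ingredients for.
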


$$\begin{tikzcd}
\mathcal{U}_\infty(V)\arrow{r}{\phi}\arrow[swap]{d}{\Upsilon_V}
& \mathcal{U}_\infty(W)\arrow{d}{\Upsilon_W} \\
K_1(V)\arrow[swap]{r}{K_1(\phi)}
& K_1(W)
\end{tikzcd}$$

\begin{proof}
Let $n\in \mathbb{N}$ and $v\in \mathcal{U}_n(V).$ Since $\vert v\vert_n=e^n=\vert v^*\vert_n$ and $\phi$ is unital completely $\vert \cdot\vert$-preserving map, we get that $\vert \phi(v)\vert_n=\phi(\vert v\vert_n)=\phi(e_V^n)=\phi(e_V)^n=e_W^n$ and $\vert \phi(v)^*\vert_n=\vert\phi(v^*)\vert_n=\phi(\vert v^*\vert_n)=\phi(e_v^n)=e_W^n$ so that $\phi(v)\in \mathcal{U}_n(W).$ Thus $\phi(\mathcal{U}_\infty(V))\subset \mathcal{U}_\infty(W).$ 

By \cite[Theorems 3.7 and 3.8]{K19}, we get that $\phi$ is a contraction on ${M_n(V)}_{sa}$ for each $n\in \mathbb{N}.$ By Theorem \ref{1}(5), we conclude that

\begin{eqnarray*}
\Vert \phi(v)\Vert_n &=& \left \Vert \begin{bmatrix} 0 & \phi(v)\\ \phi(v)^* & 0\end{bmatrix} \right \Vert_{2n} \\
&=& \left \Vert \begin{bmatrix} 0 & \phi(v)\\ \phi(v^*) & 0\end{bmatrix} \right \Vert_{2n} \\
&=& \left \Vert \phi \left(\begin{bmatrix} 0 & v \\ v^* & 0\end{bmatrix}\right) \right \Vert_{2n} \\
&\leq & \left \Vert \begin{bmatrix} 0 & v \\ v^* & 0\end{bmatrix} \right \Vert_{2n} \\
&=& \Vert v\Vert_n
\end{eqnarray*} 

so that $\phi_n$ is a contraction on $M_n(V)$. Then $\phi$ is completely contraction.

Now, let $w \in \mathcal{U}_n(V)$ such that $v \sim_h w$. As $\phi_n$ is continuous, we get $\phi(v) \sim_h \phi(w)$ in $\mathcal{U}_n(W)$.

Next, put $K_1(\phi)([(v,w)])=[(\phi(v),\phi(w))]$ for each $[(v,w)]\in K_1(V)$. We show that $K_1(\phi)$ is well-defined. Let $[(v_1,w_1)] = [(v_2,w_2)]$ for some $v_1,v_2,w_1,w_2 \in \mathcal{U}_\infty(V).$ Then there exists $u\in \mathcal{U}_\infty(V)$ such that $v_1\oplus w_2 \oplus u \sim_h v_2 \oplus w_1 \oplus u.$ Thus $\phi(v_1)\oplus \phi(w_2) \oplus \phi(u) \sim_h \phi(v_2) \oplus \phi(w_1) \oplus \phi(u)$ so that $[(\phi(v_1),\phi(w_1))] = [(\phi(v_2),\phi(w_2))].$ Hence $K_1(\phi)$ is well-defined. For all $[(v_1,w_1)],[(v_2,w_2)] \in K_1(V)$, we have that 

\begin{eqnarray*}
K_1(\phi)([(v_1,w_1)]+[(v_2,w_2)]) &=& K_1(\phi)([(v_1 \oplus v_2,w_1 \oplus w_2)]) \\
&=& [(\phi(v_1 \oplus v_2),\phi(w_1 \oplus w_2))] \\
&=& [(\phi(v_1) \oplus \phi(v_2),\phi(w_1) \oplus \phi(w_2))] \\
&=& [(\phi(v_1),\phi(w_1))] + [(\phi(v_2),\phi(w_2))] \\
&=& K_1(\phi)([(v_1,w_1)]) + K_1(\phi)([(v_2,w_2)])
\end{eqnarray*}

so that $K_1(\phi)$ is a group homomorphism. By construction $K_1$ satisfies the diagram.

\textbf{Uniqueness of $K_1(\phi)$:-} Let $\mathcal{H}: K_1(V) \to K_1(W)$ be a group homomorphism satisfying the same diagram. Then $K_1(\phi)(\Upsilon_V(v))=\Upsilon_W(\phi(v))=\mathcal{H}(\Upsilon_V(v))$ for all $v \in \mathcal{U}_\infty(V)$. Thus we get that

\begin{eqnarray*}
K_1(\phi)([(v,w)]) &=& K_1(\phi)([(v,e)]-[(w,e)]) \\
&=& K_1(\phi)(\Upsilon_V(v)-\Upsilon_V(w)) \\
&=& K_1(\phi)(\Upsilon_V(v)) - K_1(\phi)(\Upsilon_V(w)) \\
&=& \mathcal{H}(\Upsilon_V(v)) - \mathcal{H}(\Upsilon_V(w)) \\
&=& \mathcal{H}(\Upsilon_V(v)-\Upsilon_V(w)) \\
&=& \mathcal{H}([(v,w)])
\end{eqnarray*} 

for all $[(v,w)] \in K_1(V)$. Hence $K_1(\phi) = \mathcal{H}$.
\end{proof}

Let $V$ and $W$ be absolute matrix order unit spaces. We denote the identity maps on $K_1(V)$ by $I_{K_1(V)}.$  

\begin{corollary}\label{80}
Let $U,V$ and $W$ be absolute matrix order unit spaces, and $\phi: U \to V$ and $\psi: V \to W$ be unital completely $\vert \cdot \vert$-preserving maps. Then 
\begin{enumerate}
\item[(a)]$K_1(I_V)=I_{K_1(V)};$
\item[(b)]$K_1(\psi \circ \phi) = K_1(\psi) \circ K_1(\phi),$
\end{enumerate}
\end{corollary}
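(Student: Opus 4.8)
The plan is to deduce both functoriality statements directly from the commutative-diagram characterization established in Theorem \ref{79}, rather than recomputing anything on equivalence classes. The key observation is that Theorem \ref{79} produces $K_1(\phi)$ as the \emph{unique} group homomorphism making the square with $\Upsilon_V,\Upsilon_W$ commute; so to identify a given homomorphism with $K_1(\phi)$ (or with a composite), it suffices to check that it satisfies the same commuting relation and then invoke uniqueness.

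For part (a), I would first note that the identity map $I_V$ is trivially unital and completely $\vert\cdot\vert$-preserving, so $K_1(I_V)$ is defined. Then I would verify that $I_{K_1(V)}$ makes the relevant diagram commute: since $I_V$ acts as the identity on $\mathcal{U}_\infty(V)$, we have $\Upsilon_V \circ I_V = \Upsilon_V = I_{K_1(V)} \circ \Upsilon_V$. Because $I_{K_1(V)}$ is a group homomorphism satisfying the defining diagram of $K_1(I_V)$, the uniqueness clause of Theorem \ref{79} forces $K_1(I_V) = I_{K_1(V)}$.

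For part (b), I would first observe that $\psi\circ\phi$ is again unital and completely $\vert\cdot\vert$-preserving (a routine check from the definitions), so $K_1(\psi\circ\phi)$ is defined; likewise $K_1(\psi)\circ K_1(\phi)$ is a composite of group homomorphisms, hence a group homomorphism. I would then paste together the two commuting squares for $\phi$ and $\psi$ to get
\[
\Upsilon_W \circ (\psi\circ\phi) = (\Upsilon_W \circ \psi)\circ\phi = (K_1(\psi)\circ\Upsilon_V)\circ\phi = K_1(\psi)\circ(\Upsilon_V\circ\phi) = K_1(\psi)\circ K_1(\phi)\circ\Upsilon_U,
\]
using the square for $\psi$ on the middle step and the square for $\phi$ on the last. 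This exhibits $K_1(\psi)\circ K_1(\phi)$ as a group homomorphism satisfying exactly the diagram that characterizes $K_1(\psi\circ\phi)$, so uniqueness in Theorem \ref{79} yields the claimed equality.

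I do not expect any serious obstacle here; the entire content has been front-loaded into Theorem \ref{79}, and this corollary is the standard formal consequence. The only points requiring a word of justification are the closure of the morphism class under composition and identity (so that the functor is even well-defined on morphisms), and the careful bookkeeping of which square is applied at each step in the pasting computation for part (b); the uniqueness assertion of Theorem \ref{79} does all the real work.
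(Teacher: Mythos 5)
Your proposal is correct. It differs from the paper's proof in mechanics, though both lean on Theorem \ref{79}: the paper proves (a) and (b) by direct computation on arbitrary classes, using the explicit formula $K_1(\phi)([(v,w)])=[(\phi(v),\phi(w))]$ to check, element by element, that $K_1(I_V)([(v,w)])=[(v,w)]$ and that $K_1(\psi\circ\phi)([(u,v)])=K_1(\psi)\circ K_1(\phi)([(u,v)])$. You instead never touch the classes: you verify the commuting relation only on the image of $\Upsilon$ (i.e.\ $\Upsilon_V\circ I_V=I_{K_1(V)}\circ\Upsilon_V$, and the pasted identity $\Upsilon_W\circ(\psi\circ\phi)=K_1(\psi)\circ K_1(\phi)\circ\Upsilon_U$) and then invoke the uniqueness clause of Theorem \ref{79}. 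This is legitimate precisely because that uniqueness was established via the decomposition $[(v,w)]=\Upsilon_V(v)-\Upsilon_V(w)$, so the image of $\Upsilon_V$ generates $K_1(V)$; your route makes the corollary a purely formal consequence of the universal property, at the cost of having to note (as you do) that identities and composites of unital completely $\vert\cdot\vert$-preserving maps are again such maps, so that $K_1(I_V)$ and $K_1(\psi\circ\phi)$ are even defined. The paper's computational route buys slightly more transparency but repeats work; yours is shorter and generalizes verbatim to the $K$ and $K_0$ functors. No gaps.
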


\begin{proof}
\begin{enumerate}
\item[(a)] Let $v,w \in \mathcal{U}_\infty(V)$. Then
\begin{eqnarray*}
K_1(I_V)([(v,w)]) &=& [(I_V(v),I_V(w))] \\
&=& [(v,w)] 
\end{eqnarray*}

so that by Theorem \ref{79}, $K_1(I_V)=I_{K_1(V)}$.
\item[(b)] For any $[(u,v)] \in K_1(U)$, we get that
\begin{eqnarray*}
K_1(\psi \circ \phi)([(u,v)]) &=& [(\psi \circ \phi(u),\psi \circ \phi(v))] \\
&=& [(\psi(\phi(u)),\psi(\phi(v)))] \\
&=& K_1(\psi)[(\phi(u),\phi(v))] \\
&=& K_1(\psi)(K_1(\phi)([(u,v)])) \\
&=& K_1(\psi) \circ K_1(\phi)([(u,v)]). 
\end{eqnarray*}
Thus, again by Theorem \ref{79}, we conclude that $K_1(\psi \circ \phi) = K_1(\psi) \circ K_1(\phi)$.
\end{enumerate}
\end{proof}

\begin{remark}
It follows from Corollary \ref{80} that $K_1$ is a functor from category of absolute matrix order unit spaces with morphisms as unital completely $\vert \cdot \vert$-preserving maps to category of abelian groups.
\end{remark}

In the end of the subsection, we note the following isomorphic property of $K_1.$ 

\begin{corollary}
Let $V$ and $W$ be isomorphic absolute matrix order unit spaces (isomorphic in the sense that there exists a unital, bijective completely $\vert \cdot \vert$-preserving map between $V$ and $W$). Then $K_1(V)$ and $K_1(W)$ are group isomorphic. 
\end{corollary}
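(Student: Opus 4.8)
The plan is to leverage the functoriality established in Corollary~\ref{80} together with the uniqueness clause of Theorem~\ref{79}. Suppose $\phi: V \to W$ is a unital, bijective completely $\vert \cdot \vert$-preserving map witnessing the isomorphism. The first thing I would check is that the inverse map $\phi^{-1}: W \to V$ is again a unital completely $\vert \cdot \vert$-preserving map, so that $K_1(\phi^{-1})$ is defined and the hypotheses of Corollary~\ref{80} apply to the composite maps in both directions. This is the step I expect to be the main obstacle: the definition of completely $\vert \cdot \vert$-preserving says $\phi_n$ preserves absolute values, i.e. $\vert \phi_n(v) \vert_n = \phi_n(\vert v \vert_n)$, and one must verify the analogous identity for $\phi^{-1}$. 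Since $\phi$ is bijective and $*$-linear with $\phi_n$ bijective on each matrix level, writing $v = \phi_n(w)$ and applying $\phi_n^{-1}$ to the defining equation should yield $\vert \phi_n^{-1}(v) \vert_n = \phi_n^{-1}(\vert v \vert_n)$; unitality of $\phi^{-1}$ is immediate from $\phi(e_V) = e_W$. If the paper has already recorded (as it hints via \cite{K19}) that such bijective maps are complete isometries with $\vert \cdot \vert$-preserving inverses, I would simply cite that.

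Once $\phi^{-1}$ is known to be a morphism in the category, the argument is purely formal. I would apply the functor $K_1$ to the two identities $\phi^{-1} \circ \phi = I_V$ and $\phi \circ \phi^{-1} = I_W$. Using Corollary~\ref{80}(b) (compositionality) and Corollary~\ref{80}(a) ($K_1$ sends identities to identities), these give
$$K_1(\phi^{-1}) \circ K_1(\phi) = K_1(\phi^{-1} \circ \phi) = K_1(I_V) = I_{K_1(V)}$$
and symmetrically $K_1(\phi) \circ K_1(\phi^{-1}) = I_{K_1(W)}$. Hence $K_1(\phi): K_1(V) \to K_1(W)$ is a group homomorphism with a two-sided inverse $K_1(\phi^{-1})$, so it is a group isomorphism.

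In short, the entire content reduces to (i) confirming $\phi^{-1}$ lies in the morphism class, after which (ii) the functoriality from Corollary~\ref{80} does all the work via the standard categorical observation that any functor carries isomorphisms to isomorphisms. I would keep the verification in step (i) brief, citing the bijectivity of each $\phi_n$ and the $*$-linearity and unitality of $\phi$, and then state the two displayed compositional identities as the proof that $K_1(\phi)$ and $K_1(\phi^{-1})$ are mutually inverse homomorphisms. No nontrivial estimate or path-homotopy construction is needed here, since all the analytic difficulty was already absorbed into the construction of the functor $K_1$ itself.
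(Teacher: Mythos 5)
Your proposal is correct and follows essentially the same route as the paper: verify that $\phi^{-1}$ is again a unital completely $\vert \cdot \vert$-preserving map, then apply Corollary~\ref{80}(a) and (b) to $\phi^{-1} \circ \phi = I_V$ and $\phi \circ \phi^{-1} = I_W$ to conclude that $K_1(\phi)$ and $K_1(\phi^{-1})$ are mutually inverse group homomorphisms. Your added justification that $\vert \phi_n^{-1}(v)\vert_n = \phi_n^{-1}(\vert v\vert_n)$ follows by substituting $v = \phi_n(w)$ is a detail the paper simply asserts, so if anything your version is slightly more complete.
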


\begin{proof}
Let $\phi:V \to W$ be unital completely $\vert \cdot \vert$-preserving map. Then $\phi^{-1}$ is also unital completely $\vert \cdot \vert$-preserving map. Since $\phi^{-1} \circ \phi = I_{V}$ and $\phi \circ \phi^{-1} = I_{W}$, by Corollary \ref{80}(a) and (b), we get that $K_1(\phi^{-1}) \circ K_1(\phi) = I_{K_1(V)}$ and $K_1(\phi) \circ K_1(\phi^{-1}) = I_{K_1(W)}$. Thus $K_1(\phi):K_1(V) \to K_1(W)$ is a surjective group isomorphism and $K_1(\phi)^{-1} = K_1(\phi^{-1})$. Hence $K_1(V)$ and $K_1(W)$ are group isomorphic. 
\end{proof}

\section{$K$-group corresponding to an absolute matrix order unit space}\label{2}

This section is very similar to the previous one except few results. Nevertheless, for the sake of completeness and accuracy of results, let's have a quick look at all the results with proofs as we need some results of this section for the next section, where we derive a relation among $K_0(V),K_1(V)$ and $K(V)$ for an absolute matrix order unit space $V$. Note that $K(V)$ is described in this section. Let's start the section.

The notion of partial unitary elements in an absolute matrix order unit space has been introduced by Karn and the author in \cite{PI19}. In this section, we study basic properties of partial unitary elements in absolute matrix order unit spaces. We also study path homotopy equivalence of partial unitary elements in absolute matrix order unit spaces. By path homotopy equivalence, we define and study some variants of equivalence of partial unitary elements which will help us to describe the Grothendieck group $K(V)$ of an absolute matrix order unit space $V$ for partial unitary elements. The $K$-group for absolute matrix order unit spaces is a generalization of $K$-group for $C^*$-algebras. Later, we study order structure and functoriality of $K(V).$ Now, we begin with the following definition:

\begin{definition}[\cite{PI19}, Definition 3.1(5)]\label{86}
Let $V$ be an absolute matrix order unit space and let $u \in M_n(V)$ for some $n\in \mathbb{N}.$ Then $u$ is said to be \emph{partial unitary}, if $\vert u \vert_n = \vert u^* \vert_n$ is an order projection. We denote the set of all the partial unitary elements in $M_n(V)$ by $\mathcal{PU}_n(V).$ Under the identification  $M_\infty(V) = \displaystyle \bigcup_{n=1}^\infty M_n(V),$ the corresponding set of partial unitaries is identified with $\mathcal{PU}_\infty(V) = \displaystyle \bigcup_{n=1}^\infty \mathcal{PU}_n(V).$

\end{definition}

We recall some properties of partial unitary elements.

\begin{proposition}\label{65}
Let $V$ be an absolute matrix order unit space. Then 
\begin{enumerate}
\item[(1)] If $m,n \in \mathbb{N}$ and $u\in \mathcal{PU}_m(V),~v\in \mathcal{PU}_n(V),$ then $u\oplus v=\begin{bmatrix} u & 0 \\ 0 & v \end{bmatrix}\in \mathcal{PU}_{m+n}(V).$ In particular, $\oplus$ defines a binary operation on $\mathcal{PU}_\infty(V).$
\item[(2)] If $v\in \mathcal{PU}_n(V)$ and $\alpha \in \mathcal{U}_n(\mathbb{C}),$ then $\alpha^*v\alpha \in \mathcal{PU}_n(V).$    
\item[(3)] For each $v\in \mathcal{PI}_{m,n}(V),w=\begin{bmatrix}0 & v \\ v^* & 0\end{bmatrix} \in \mathcal{PI}_{m+n}(V)$ with $w^*=w$ so that $w\in \mathcal{PU}_{m+n}(V)$    
\end{enumerate}
\end{proposition}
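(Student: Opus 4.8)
The plan is to prove each of the three parts of Proposition~\ref{65} by reducing the statement about partial unitaries to the corresponding statement about order projections (Proposition~\ref{3}) together with the functional-analytic properties of the absolute value recorded in Proposition~\ref{91} and Lemma~\ref{95}. Recall that by Definition~\ref{86}, an element $u \in M_n(V)$ is partial unitary precisely when $\vert u\vert_n = \vert u^*\vert_n \in \mathcal{OP}_n(V)$, so in each case I must verify two things: that the two absolute values coincide, and that the common value is an order projection.

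For part (1), I would start from $u \in \mathcal{PU}_m(V)$ and $v \in \mathcal{PU}_n(V)$, so that $\vert u\vert_m = \vert u^*\vert_m \in \mathcal{OP}_m(V)$ and $\vert v\vert_n = \vert v^*\vert_n \in \mathcal{OP}_n(V)$. Using the additivity of the absolute value over direct sums from Definition~\ref{152}(3), I compute $\vert u\oplus v\vert_{m+n} = \vert u\vert_m \oplus \vert v\vert_n$ and likewise $\vert (u\oplus v)^*\vert_{m+n} = \vert u^*\vert_m \oplus \vert v^*\vert_n$; these two are equal since the summands agree. Finally, Proposition~\ref{3} gives that the direct sum of two order projections is again an order projection, so the common value lies in $\mathcal{OP}_{m+n}(V)$, whence $u\oplus v \in \mathcal{PU}_{m+n}(V)$. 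This mirrors exactly the argument in the proof of Proposition~\ref{30}(1).

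For part (2), with $v \in \mathcal{PU}_n(V)$ and $\alpha \in \mathcal{U}_n(\mathbb{C})$, Lemma~\ref{95} yields $\vert \alpha^* v\alpha\vert_n = \alpha^*\vert v\vert_n \alpha$ and $\vert (\alpha^* v\alpha)^*\vert_n = \vert \alpha^* v^*\alpha\vert_n = \alpha^*\vert v^*\vert_n\alpha$; since $\vert v\vert_n = \vert v^*\vert_n$, the two agree. That the common value $\alpha^*\vert v\vert_n\alpha$ is an order projection follows from Proposition~\ref{4} applied to the order projection $\vert v\vert_n$ (a unitary is in particular an isometry $\alpha^*\alpha = I_n$). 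For part (3), given $v \in \mathcal{PI}_{m,n}(V)$, I set $w = \begin{bmatrix} 0 & v \\ v^* & 0\end{bmatrix}$, note $w^* = w$, and invoke Proposition~\ref{91}(2) to get $\vert w\vert_{m+n} = \vert v^*\vert_{n,m}\oplus \vert v\vert_{m,n}$; since $w = w^*$ this value equals $\vert w^*\vert_{m+n}$ automatically, and Proposition~\ref{89} (or directly Proposition~\ref{3}, since $\vert v\vert_{m,n}, \vert v^*\vert_{n,m} \in \mathcal{OP}$ by the definition of partial isometry) shows it is an order projection, so $w \in \mathcal{PU}_{m+n}(V)$.

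I do not expect any serious obstacle here, as the whole proposition is the partial-unitary analogue of Proposition~\ref{30} and each piece is a direct application of an already-established lemma. The only point requiring mild care is the bookkeeping of the two absolute values $\vert\cdot\vert$ and $\vert(\cdot)^*\vert$ throughout, making sure that in each case their equality is inherited from the hypothesis rather than assumed; this is routine but should be stated explicitly to keep the argument clean.
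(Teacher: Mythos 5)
Your proposal is correct and follows essentially the same route as the paper: in each part you verify that $\vert\cdot\vert$ and $\vert(\cdot)^*\vert$ coincide and that the common value is an order projection, using Definition~\ref{152}(3) resp.\ Lemma~\ref{95} resp.\ Proposition~\ref{91}(2) for the absolute-value computation and Proposition~\ref{3} resp.\ Proposition~\ref{4} for the order-projection claim. If anything, your citations are slightly more precise than the paper's (which invokes Proposition~\ref{4} where Definition~\ref{152}(3), Proposition~\ref{91}(2) and Proposition~\ref{3} are what is actually used), but the substance of the argument is identical.
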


\begin{proof}
Let $u\in \mathcal{PU}_m(V)$ and $v\in \mathcal{PU}_n(V).$ Then $\vert u\vert_m=\vert u^*\vert_m \in \mathcal{OP}_m(V)$ and $\vert v\vert_n= \vert v^*\vert_n \in \mathcal{OP}_n(V).$ By Proposition \ref{4}, we get that $\vert u\oplus v\vert_{m+n} = \vert u\vert_m \oplus \vert v\vert_n = \vert u^*\vert_m\oplus \vert v^*\vert_n = \vert u^*\oplus v^*\vert_{m+n} =\vert (u\oplus v)^*\vert_{m+n}\in \mathcal{OP}_{m+n}(V).$ Thus $u \oplus v \in \mathcal{PU}_{m+n}(V).$

Next, by Lemma \ref{95} and by Corollary \ref{5}, we get $|\alpha^*v\alpha|_n=\alpha^*|v|_n\alpha=\alpha^* \vert v^* \vert_n \alpha=\vert (\alpha^* v\alpha)^*\vert_n $ and $\alpha^*v\alpha \in \mathcal{PI}_n(V)$. Thus $\alpha^*v\alpha\in \mathcal{PU}_n(V).$

Let $v\in \mathcal{PI}_{m,n}(V).$ Thus $\vert v^*\vert_{n,m}\in \mathcal{OP}_m(V)$ and $\vert v\vert_{m,n}\in \mathcal{OP}_n(V).$ Put $w=\begin{bmatrix}0 & v \\ v^* & 0\end{bmatrix}.$ Then $w^*=w$ and by Proposition \ref{4}, we have $\vert w\vert_{2n}=\vert v^*\vert_n\oplus \vert v\vert_n \in \mathcal{OP}_{m+n}(V)$ so that $w \in \mathcal{PI}_{m+n}(V).$ Hence $w\in \mathcal{PU}_{m+n}(V).$
\end{proof}

Next, we study the path homotopy of partial unitary elements in absolute matrix order unit spaces.

\begin{definition}\label{66}
Let $V$ be an absolute matrix order unit space. Let $u,v \in \mathcal{PU}_n(V)$ for some $n \in \mathbb{N}.$ We say that $u$ is path homotopic to $v$ (we continue to write it, $u\sim_h v$) if there exists a continuous function $f:[0,1] \to \mathcal{PU}_n(V)$ such that $f(0)=u$ and $f(1)=v.$
\end{definition}

Following the proof mentioned in Proposition \ref{33}, we see that the path homotopy of partial unitaries is also an equivalence realtion.

\begin{proposition}\label{67}
Let $V$ be an absolute matrix order unit space and let $n \in \mathbb{N}.$ Then $\sim_h$ is an equivalence relation on $\mathcal{PU}_n(V).$
\end{proposition}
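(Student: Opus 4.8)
The plan is to reproduce verbatim the argument of Proposition~\ref{33}, replacing $\mathcal{U}_n(V)$ throughout by $\mathcal{PU}_n(V)$. The point is that being an equivalence relation is a purely topological feature of path homotopy: once we know that $\mathcal{PU}_n(V)$ sits inside the normed space $M_n(V)$ (which it does, since $M_n(V)$ is normed by Theorem~\ref{1}), the standard constructions of constant, reversed, and concatenated paths all produce continuous maps into $\mathcal{PU}_n(V)$, because each such map takes only values already attained by the given paths, and those values lie in $\mathcal{PU}_n(V)$ by hypothesis.

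Concretely, first I would establish reflexivity by fixing $u \in \mathcal{PU}_n(V)$ and taking the constant path $f(t) = u$ for all $t \in [0,1]$; this is trivially continuous with $f(0) = u = f(1)$, so $u \sim_h u$. For symmetry and transitivity I would start from continuous paths $f_1, f_2 : [0,1] \to \mathcal{PU}_n(V)$ with $f_1(0) = u$, $f_1(1) = v = f_2(0)$, and $f_2(1) = w$. I would then define the reversed path $g_1(t) = f_1(1-t)$, which is continuous with $g_1(0) = v$ and $g_1(1) = u$, yielding $v \sim_h u$ (symmetry). For transitivity I would define the concatenation
\[
g_2(t) = \begin{cases} f_1(2t) & t \in [0,\tfrac{1}{2}] \\ f_2(2t-1) & \text{elsewhere,} \end{cases}
\]
which is well defined and continuous (the two pieces agree at $t = \tfrac{1}{2}$, where both equal $v$, by the pasting lemma for continuous functions on closed sets), with $g_2(0) = u$ and $g_2(1) = w$, giving $u \sim_h w$.

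I do not expect any genuine obstacle here; indeed the excerpt itself flags that this follows the proof of Proposition~\ref{33}. The only detail worth stating is that the concatenated and reversed paths land in $\mathcal{PU}_n(V)$ rather than in some larger set, but this is immediate since their ranges are contained in the union of the ranges of $f_1$ and $f_2$. The continuity of $g_2$ at the junction $t = \tfrac{1}{2}$ is the sole mild technicality, handled by the standard gluing argument, exactly as in the unitary case.
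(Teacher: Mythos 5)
Your proposal is correct and matches the paper exactly: the paper itself gives no separate argument here, simply noting that the proof of Proposition \ref{33} carries over verbatim with $\mathcal{U}_n(V)$ replaced by $\mathcal{PU}_n(V)$, which is precisely what you do with the constant, reversed, and concatenated paths. Nothing further is needed.
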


In next two results, we study some properties of the path homotopy of partial unitary elements in absolute matrix order unit spaces.

\begin{proposition}\label{68}
Let $V$ be an absolute matrix order unit space. Then 
\begin{enumerate}
\item[(1)] If $u\sim_h u'$ and $v\sim_h v'$ in $\mathcal{PU}_m(V)$ and $\mathcal{PU}_n(V)$ respectively, then $u\oplus v\sim_h u'\oplus v'$ in $\mathcal{PU}_{m+n}(V).$    
\item[(2)] If $u\sim_h v$ and $\alpha \sim_h \beta$ in $~\mathcal{PU}_n(V)$ and $~\mathcal{U}_n(\mathbb{C})$ respectively, then $\alpha^*u\alpha \sim_h \beta^*v\beta$ in $\mathcal{PU}_n(V)$. 
\end{enumerate}
\end{proposition}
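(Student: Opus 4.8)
The plan is to mirror, step for step, the proof of Proposition \ref{32} (the unitary analogue), replacing the unitary closure properties of Proposition \ref{30} by the partial-unitary closure properties of Proposition \ref{65}. The normed-space estimates supplied by Theorem \ref{1} hold at every matrix level and are insensitive to whether we work with $\mathcal{U}$ or $\mathcal{PU}$, so the only place where anything must be re-checked is the assertion that the candidate paths remain inside $\mathcal{PU}$.

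For part (1), I would begin with continuous functions $f_1:[0,1]\to \mathcal{PU}_m(V)$ and $f_2:[0,1]\to \mathcal{PU}_n(V)$ witnessing $u\sim_h u'$ and $v\sim_h v'$, and set $g(t)=f_1(t)\oplus f_2(t)$. The endpoints are immediate: $g(0)=u\oplus v$ and $g(1)=u'\oplus v'$. By Proposition \ref{65}(1), each $g(t)$ lies in $\mathcal{PU}_{m+n}(V)$. Continuity of $g$ follows from Theorem \ref{1}(2), which gives
$$\Vert g(t_1)-g(t_2)\Vert_{m+n}=\max\lbrace \Vert f_1(t_1)-f_1(t_2)\Vert_m,\ \Vert f_2(t_1)-f_2(t_2)\Vert_n \rbrace,$$
so that continuity of $f_1$ and $f_2$ forces continuity of $g$. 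Hence $u\oplus v\sim_h u'\oplus v'$ in $\mathcal{PU}_{m+n}(V)$.

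For part (2), I would take continuous functions $f_1:[0,1]\to \mathcal{U}_n(\mathbb{C})$ and $f_2:[0,1]\to \mathcal{PU}_n(V)$ realizing $\alpha\sim_h\beta$ and $u\sim_h v$. Since $\Vert\delta^*\Vert=\Vert\delta\Vert$ on $M_n$, the map $f_1^*(t):=f_1(t)^*$ is again continuous, and I would set $g(t)=f_1^*(t)\,f_2(t)\,f_1(t)$. Then $g(0)=\alpha^*u\alpha$ and $g(1)=\beta^*v\beta$, and by Proposition \ref{65}(2) every $g(t)$ belongs to $\mathcal{PU}_n(V)$. Continuity of $g$ follows from the submultiplicative bound $\Vert \alpha w\beta\Vert_n\leq \Vert\alpha\Vert\,\Vert w\Vert_n\,\Vert\beta\Vert$ of Theorem \ref{1}(3), combined with the continuity of $f_1,f_1^*$ and $f_2$, giving $\alpha^*u\alpha\sim_h\beta^*v\beta$ in $\mathcal{PU}_n(V)$.

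I expect no genuine obstacle here: the argument is essentially a verbatim transcription of the proof of Proposition \ref{32}, and the single substantive difference—closure of $\oplus$ and of conjugation by scalar unitaries within $\mathcal{PU}$—is already settled by Proposition \ref{65}. The one point worth a moment's care is that in part (2) the conjugating path $f_1$ takes values in the \emph{scalar} unitaries $\mathcal{U}_n(\mathbb{C})$ (not in $\mathcal{PU}_n(V)$), which is exactly the hypothesis of Proposition \ref{65}(2) and is what keeps $g(t)$ inside $\mathcal{PU}_n(V)$; everything else is routine.
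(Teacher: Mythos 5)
Your proposal is correct and follows essentially the same route as the paper: the paper's proof also takes the witnessing paths $f_1, f_2$, forms $g(t)=f_1(t)\oplus f_2(t)$ and $g(t)=f_1^*(t)f_2(t)f_1(t)$, and invokes the closure properties of Proposition \ref{65} together with the norm estimates already used in Proposition \ref{32}. Your version merely makes the continuity estimates from Theorem \ref{1} explicit where the paper abbreviates them.
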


\begin{proof}
Let $f_1:[0,1]\to \mathcal{PU}_m(V)$ and  $f_2:[0,1]\to \mathcal{PU}_n(V)$ be continuous functions such that $f_1(0)=u,f_1(1)=u',f_2(0)=v$ and $f_2(1)=v'.$ Define $g:[0,1]\to \mathcal{PU}_{m+n}(V)$ such that $g(t)=f_1(t)\oplus f_2(t)$ for all $t\in [0,1].$ Then $g(0)=u\oplus v, g(1)=u'\oplus v'$ and $g$ is also continuous. Thus $u\oplus v\sim_h u'\oplus v'$ in $\mathcal{PU}_{m+n}(V).$  

Next, let $f_1:[0,1]\to \mathcal{U}_n(\mathbb{C})$ and  $f_2:[0,1]\to \mathcal{PU}_n(V)$ are continuous functions such that $f_1(0)=\alpha,f_1(1)=\beta,f_2(0)=u$ and $f_2(1)=v.$ Define $g:[0,1]\to \mathcal{PU}_n(V)$ such that $g(t)=f_1^*(t)f_2(t)f_1(t)$ for all $t\in [0,1].$ Then $g(0)=\alpha^*u\alpha,g(1)=\beta^*v\beta$ and $g$ is continuous. Hence $\alpha^*u\alpha \sim_h \beta^*v\beta$ in $\mathcal{PU}_n(V).$
\end{proof}

\begin{lemma}\label{69}
Let $V$ be an absolute matrix order unit space. Then $u\oplus v\sim_h v\oplus u$ in $\mathcal{PU}_{m+n}(V)$ for all $u\in \mathcal{PU}_m(V)$ and $v\in \mathcal{PU}_n(V).$
\end{lemma}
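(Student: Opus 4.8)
The plan is to mirror the proof of Lemma \ref{34} almost verbatim, replacing unitaries throughout by partial unitaries and invoking the partial-unitary analogues of the tools used there. The whole argument rests on conjugating $u\oplus v$ by the block flip (permutation) matrix that swaps the two summands, together with the fact that this scalar unitary is path homotopic to the identity inside $\mathcal U_{m+n}(\mathbb C)$.

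First I would fix $u\in\mathcal{PU}_m(V)$ and $v\in\mathcal{PU}_n(V)$ and introduce the flip
$$\alpha=\begin{bmatrix}0_{m,n}&I_m\\I_n&0_{n,m}\end{bmatrix}\in\mathcal U_{m+n}(\mathbb C).$$
A direct block computation gives $\alpha^*(u\oplus v)\alpha=v\oplus u$, so the target of the desired homotopy is exactly a conjugate of $u\oplus v$ by a scalar unitary; by Proposition \ref{65}(2) both $u\oplus v$ and its conjugate $v\oplus u$ indeed lie in $\mathcal{PU}_{m+n}(V)$, so the statement at least makes sense inside the correct set.

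Next I would supply the homotopy at the scalar level. Since $\alpha$ is a permutation matrix, its spectrum $\sigma(\alpha)$ is a proper subset of $\mathbb{T}$, so \cite[Lemma 2.1.3(ii)]{RLL00} yields $I_{m+n}\sim_h\alpha$ in $\mathcal U_{m+n}(\mathbb C)$. Combining the reflexive homotopy $u\oplus v\sim_h u\oplus v$ (Proposition \ref{67}) with this homotopy $I_{m+n}\sim_h\alpha$ and feeding both into Proposition \ref{68}(2) gives
$$u\oplus v=I_{m+n}^*(u\oplus v)I_{m+n}\sim_h\alpha^*(u\oplus v)\alpha=v\oplus u$$
in $\mathcal{PU}_{m+n}(V)$, which is precisely the claim.

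I do not expect a genuine obstacle here: the only content that differs from the unitary case is the replacement of Propositions \ref{30}(2) and \ref{32}(2) by their already-established partial-unitary counterparts Propositions \ref{65}(2) and \ref{68}(2). The single point deserving care is the block identity $\alpha^*(u\oplus v)\alpha=v\oplus u$, where the rectangular shapes of the off-diagonal identity and zero blocks must be tracked so that every product is conformable; once that bookkeeping is done the homotopy transfers immediately, and no new idea beyond the unitary argument is required.
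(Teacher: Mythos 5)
Your proposal is correct and follows essentially the same route as the paper: the paper's proof of Lemma \ref{69} likewise conjugates $u\oplus v$ by the block flip matrix, uses $I_{m+n}\sim_h \alpha$ in $\mathcal{U}_{m+n}(\mathbb{C})$ (via the spectral argument already set up in Lemma \ref{34}), and applies Proposition \ref{68}(2) to transfer the homotopy to $\mathcal{PU}_{m+n}(V)$. Your additional remarks on conformability of the blocks and on Proposition \ref{65}(2) are correct but not needed beyond what the paper records.
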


\begin{proof}
Again, by $I_{m+n}\sim_h z$ in $\mathcal{U}_{m+n}(\mathbb{C}),$ we get that $u\oplus v = I_{m+n}\cdot (u\oplus v)\cdot I_{m+n}\sim_h z^*\cdot (u\oplus v)\cdot z = v\oplus u$ in $\mathcal{PU}_{m+n}(V).$ 
\end{proof}

Now, we define a new relation on partial unitaries by help of path homotopy.

\begin{definition}
Let $V$ be an absolute matrix order unit space. Define the relation $\sim_K$ on $\mathcal{PU}_{\infty}(V)$ by given $u\in \mathcal{PU}_m(V)$ and $v\in \mathcal{PU}_n(V),~u\sim_K v$ if and only if there exists $k\in \mathbb{N},~k>max \lbrace m,n\rbrace$ such that $u\oplus 0_{k-m}\sim_h v\oplus 0_{k-n}$ in $\mathcal{PU}_k(V).$ 
\end{definition}

The following result describes that new relation is stronger one.

\begin{corollary}\label{70}
Let $V$ be an absolute matrix order unit space. Then 
\begin{enumerate}
\item[(1)] $\sim_h$ implies $\sim_K$ in $\mathcal{PU}_m(V)$ for any $m \in \mathbb{N}.$
\item[(2)] $\sim_K$ is an equivalence relation on $\mathcal{PU}_\infty(V).$
\end{enumerate}
\end{corollary}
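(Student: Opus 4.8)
The plan is to mirror the proof of Corollary \ref{35} almost verbatim, replacing the order-unit padding $e^{k-m}$ by the zero padding $0_{k-m}$ throughout, and invoking the partial-unitary analogues of the supporting results, namely Propositions \ref{67} and \ref{68} and Lemma \ref{69} in place of Propositions \ref{33} and \ref{32} and Lemma \ref{34}. The only preliminary observation I need is that $0_n \in \mathcal{PU}_n(V)$ for every $n$: since $\vert 0_n\vert_n = 0_n = \vert 0_n^*\vert_n$ and $0_n$ is trivially an order projection, the zero matrix qualifies as a partial unitary, so all the zero-padded elements appearing below genuinely lie in $\mathcal{PU}_\infty(V)$. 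This is the essential point that distinguishes the partial-unitary case from the unitary case, where padding was done by $e^{k-m}$ instead.

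For part (1), I would take $u,v \in \mathcal{PU}_m(V)$ with $u \sim_h v$ and set $k = m + n$ for any $n \in \mathbb{N}$. Since $0_n \sim_h 0_n$ via the constant path, Proposition \ref{68}(1) gives $u \oplus 0_{k-m} = u \oplus 0_n \sim_h v \oplus 0_n = v \oplus 0_{k-m}$ in $\mathcal{PU}_k(V)$, whence $u \sim_K v$ by definition of $\sim_K$.

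For part (2), reflexivity is immediate from part (1), since every $u$ satisfies $u \sim_h u$ by Proposition \ref{67}. For symmetry and transitivity I would copy the bookkeeping of Corollary \ref{35}(2): given $u \in \mathcal{PU}_l(V)$, $v \in \mathcal{PU}_m(V)$, $w \in \mathcal{PU}_n(V)$ with $u \sim_K v$ and $v \sim_K w$, choose witnesses $k_1 > \max\{l,m\}$ and $k_2 > \max\{m,n\}$, set $k = k_1 + k_2$, and use Proposition \ref{68}(1) to append the appropriate zero blocks so that $u \oplus 0_{k-l} \sim_h v \oplus 0_{k-m}$ and $v \oplus 0_{k-m} \sim_h w \oplus 0_{k-n}$ in $\mathcal{PU}_k(V)$; the symmetry and transitivity of $\sim_h$ from Proposition \ref{67} then yield $v \sim_K u$ and $u \sim_K w$.

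The only step requiring any care — and it is purely formal — is checking that the block reshuffling matches up, that is, verifying identities such as $(u \oplus 0_{k_1-l}) \oplus 0_{k_2} = u \oplus 0_{k-l}$. These hold because $\oplus$ is associative and the sizes of the appended zero blocks add up correctly, here because $(k_1 - l) + k_2 = k - l$. I do not expect a genuine obstacle: substituting $0$ for $e$ changes none of the homotopy arguments, since Proposition \ref{68}(1) applies to arbitrary homotopic pairs and the constant path witnesses $0_n \sim_h 0_n$ just as $e^n \sim_h e^n$ was used in Corollary \ref{35}.
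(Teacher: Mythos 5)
Your proposal is correct and follows essentially the same route as the paper: pad with zero blocks, use the constant path $0_n \sim_h 0_n$ together with Proposition \ref{68}(1) for part (1), and for part (2) choose witnesses $k_1, k_2$, set $k = k_1 + k_2$, and appeal to the symmetry and transitivity of $\sim_h$ from Proposition \ref{67}. Your explicit check that $0_n \in \mathcal{PU}_n(V)$ (via $\vert 2\cdot 0_n - e^n\vert_n = \vert -e^n\vert_n = e^n$) is a small point the paper leaves implicit, but it does not change the argument.
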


\begin{proof}
Let $m\in \mathbb{N}$ and $u,v \in \mathcal{PU}_m(V)$ with $u \sim_h v.$ Since $0_n \sim_h 0_n$ for all $n \in \mathbb{N},$ by Proposition \ref{68}(1), we get $u \oplus 0_{k-m} = u \oplus 0_n \sim_h v\oplus 0_n = v\oplus 0_{k-m}$ for $k=m+n > m.$ Thus $u \sim_K v$ in $\mathcal{PU}_m(V).$

Next, let $u\in \mathcal{PU}_l(V),v\in \mathcal{PU}_m(V)$ and $w \in \mathcal{PU}_n(V)$ with $u \sim_K v$ and $v \sim_K w.$ Then $u \sim_K u$ as $u \sim_h u$ and there exist $k_1,k_2 \in \mathbb{N}$ with $k_1>\max \lbrace l,m\rbrace$ and  $k_2>\max \lbrace m,n\rbrace$ such that $u\oplus 0_{k_1-l} \sim_h v\oplus 0_{k_1-m}$ and $v\oplus 0_{k_2-m} \sim_h w\oplus 0_{k_2-n}$ in $\mathcal{PU}_{k_1}(V)$ and $\mathcal{PU}_{k_2}(V)$ respectively. Put $k=k_1+k_2$ so that $k>\max \lbrace l,m,n \rbrace$ and by Proposition \ref{68}(1), we have $$u\oplus 0_{k-l}=(u\oplus 0_{k_1-l})\oplus 0_{k_2} \sim_h (v\oplus 0_{k_1-m})\oplus 0_{k_2}= v\oplus 0_{k-m}$$ and $$v\oplus 0_{k-m}=(v\oplus 0_{k_2-m})\oplus 0_{k_1} \sim_h (w\oplus 0_{k_2-n})\oplus 0_{k_1}=w\oplus 0_{k-n}$$ in $\mathcal{PU}_k(V).$ Now, by Proposition \ref{67}, we get $v\oplus 0_{k-m} \sim_h u\oplus 0_{k-l}$ and $u\oplus 0_{k-l} \sim_h w\oplus 0_{k-n}$ in $\mathcal{PU}_k(V).$ Thus $v\sim_K u$ and $u\sim_K w$ in $\mathcal{PU}_\infty(V).$

\end{proof}

We study some properties of newly defined equivalence relation.

\begin{proposition}\label{71}
Let $V$ be an absolute matrix order unit space and let $u,v,w,u',v'\in \mathcal{PU}_{\infty}(V).$ Then
\begin{enumerate}
\item[(1)] $(u\oplus v)\oplus w=u\oplus (v\oplus w).$
\item[(2)] $u\sim_K u\oplus 0_n$ for all $n\in \mathbb{N}.$
\item[(3)] $u\oplus v\sim_K v\oplus u.$
\item[(4)] If $u\sim_K u'$ and $v\sim_K v',$ then $u\oplus v\sim_K u'\oplus v'.$
\end{enumerate}
\begin{proof}
Let $u \in \mathcal{PU}_m(V).$ For fixed $n \in \mathbb{N},$ we can find $k \in \mathbb{N}$ such that $k > m+n.$ Then by Proposition \ref{67}, we get $u\oplus 0_{k-m} \sim_h u\oplus 0_{k-m}=(u\oplus 0_n)\oplus 0_{k-m-n}$ so that $u\sim_K u\oplus 0_n.$

By Lemma \ref{69}, we have $u\oplus v \sim_h v\oplus u.$ Now, by Corollary \ref{70}(1), we get $u\oplus v \sim_K v\oplus u.$

Next, let $u' \in \mathcal{PU}_l(V),v\in \mathcal{PU}_{m'}(V)$ and $v'\in \mathcal{PU}_{l'}(V)$ such that $u \sim_K u'$ and $v\sim_K v'.$ Then there exist $k_1,k_2 \in \mathbb{N},k_1 > \max \lbrace m,l\rbrace,k_2 > \max \lbrace m',l'\rbrace$ with $u\oplus 0_{k_1-m}\sim_h u'\oplus 0_{k1-l}$ and $v\oplus 0_{k_2-m'}\sim_h v'\oplus 0_{k_2-l'}.$ Put $k=k_1+k_2 > \lbrace m+m',l+l'\rbrace.$ Using part (1), Lemma \ref{69} and Propositions \ref{68}(1) and \ref{67} respectively, we get 
\begin{eqnarray*}
(u\oplus v)\oplus 0_{k-m-m'} &\sim_h & (u\oplus 0_{k_1-m})\oplus (v\oplus 0_{k_2-m'})\\
&\sim_h & (u'\oplus 0_{k_1-l})\oplus (v'\oplus 0_{k_2-l'}) \\
&\sim_h & (u'\oplus v')\oplus 0_{k-l-l'}
\end{eqnarray*}
so that $u\oplus v \sim_K u'\oplus v'.$
\end{proof}
\end{proposition}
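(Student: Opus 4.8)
The plan is to handle the four parts in increasing order of difficulty, exploiting that this is the partial-unitary analogue of Proposition \ref{40}, with the zero padding $0_n$ playing the role of the order-unit padding $e^n$ and with Propositions \ref{67}, \ref{68} and Lemma \ref{69} replacing Propositions \ref{33}, \ref{32} and Lemma \ref{34}. Part (1) is just associativity of the block-diagonal direct sum: both sides are the same block matrix with $u,v,w$ down the diagonal, so no homotopy is needed. For part (2), fix $u\in\mathcal{PU}_m(V)$ and $n\in\mathbb{N}$ and pick any $k>m+n$; by part (1) the padding reassociates as $0_{k-m}=0_n\oplus 0_{k-m-n}$, so $u\oplus 0_{k-m}$ and $(u\oplus 0_n)\oplus 0_{k-m-n}$ are literally equal, and reflexivity of $\sim_h$ (Proposition \ref{67}) supplies the constant path demanded by $\sim_K$. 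For part (3), Lemma \ref{69} already gives $u\oplus v\sim_h v\oplus u$, and Corollary \ref{70}(1) upgrades this to $u\oplus v\sim_K v\oplus u$.

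The real content is part (4). Suppose $u\in\mathcal{PU}_m(V)$, $u'\in\mathcal{PU}_l(V)$, $v\in\mathcal{PU}_{m'}(V)$, $v'\in\mathcal{PU}_{l'}(V)$ with $u\sim_K u'$ and $v\sim_K v'$, so there exist $k_1>\max\{m,l\}$ and $k_2>\max\{m',l'\}$ with $u\oplus 0_{k_1-m}\sim_h u'\oplus 0_{k_1-l}$ and $v\oplus 0_{k_2-m'}\sim_h v'\oplus 0_{k_2-l'}$. I would set $k=k_1+k_2$ and produce a three-step chain of homotopies: first reshuffle so each matrix sits beside its own padding, $(u\oplus v)\oplus 0_{k-m-m'}\sim_h (u\oplus 0_{k_1-m})\oplus(v\oplus 0_{k_2-m'})$; then apply Proposition \ref{68}(1) to the two given homotopies to pass to $(u'\oplus 0_{k_1-l})\oplus(v'\oplus 0_{k_2-l'})$; then reshuffle back to $(u'\oplus v')\oplus 0_{k-l-l'}$; finally chain these together using transitivity of $\sim_h$ (Proposition \ref{67}), yielding $u\oplus v\sim_K u'\oplus v'$.

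The main (indeed the only nontrivial) obstacle is justifying the reshuffling homotopy in the first and third steps. For this I would use part (1) to split $0_{k-m-m'}=0_{k_1-m}\oplus 0_{k_2-m'}$, and then move the block $v$ past the block $0_{k_1-m}$ by applying Lemma \ref{69} to the pair $(v,0_{k_1-m})$ while holding the flanking blocks $u$ and $0_{k_2-m'}$ fixed through Proposition \ref{68}(1) (pairing the single nontrivial commuting homotopy with constant paths on the outer factors). A quick dimension count, $(k_1-m)+(k_2-m')=k-m-m'$, confirms the paddings match, so all the direct sums land in $\mathcal{PU}_k(V)$ and the rearrangements are legitimate.
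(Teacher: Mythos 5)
Your proposal is correct and follows essentially the same route as the paper: part (2) via reassociation of the zero padding plus reflexivity of $\sim_h$ (Proposition \ref{67}), part (3) via Lemma \ref{69} and Corollary \ref{70}(1), and part (4) via the same three-step chain of homotopies with $k=k_1+k_2$. Your extra detail on justifying the reshuffling step (swapping $v$ past $0_{k_1-m}$ by Lemma \ref{69} while holding the outer blocks fixed with constant paths via Proposition \ref{68}(1)) is exactly what the paper's terser citation of ``part (1), Lemma \ref{69} and Propositions \ref{68}(1) and \ref{67}'' intends.
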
 

Next, we again define a new relation on partial unitaries by $\sim_K.$

\begin{definition}
Let $V$ be an absolute matrix order unit space. For $u,v\in \mathcal{PU}_\infty(V),$ we say that $u\approx_K v,$ if there exists $w\in \mathcal{PU}_\infty(V)$ such that $u\oplus w\sim_K v\oplus w.$
\end{definition}

The following result shows that $\approx_K$ is stronger than $\sim_K.$

\begin{proposition}\label{72}
Let $V$ be an absolute matrix order unit space and let $n \in \mathbb{N}.$ Then 
\begin{enumerate}
\item[(1)] $\sim_K$ implies $\approx_K$ in $\mathcal{PU}_\infty(V).$
\item[(2)] $\approx_K$ is an equivalence relation on $\mathcal{PU}_\infty(V).$
\end{enumerate}
\end{proposition}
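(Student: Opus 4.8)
The plan is to mirror exactly the proof of Proposition \ref{61}, since $\approx_K$ is defined from $\sim_K$ in precisely the same way that $\approx_1$ was defined from $\sim_1$, and all the structural ingredients (Corollary \ref{70} and Proposition \ref{71}) are the partial-unitary analogues of Corollary \ref{35} and Proposition \ref{40}. For part (1), I would observe that by Proposition \ref{67} we have $u \oplus w \sim_h u \oplus w$ for any $u, w \in \mathcal{PU}_\infty(V)$, and then Corollary \ref{70}(1) upgrades this to $u \oplus w \sim_K u \oplus w$. Given $u \sim_K v$, Proposition \ref{71}(4) yields $u \oplus w \sim_K v \oplus w$ for any witness $w$, so choosing any $w$ (e.g. $w = 0_n$ for suitable $n$) gives $u \approx_K v$ directly from the definition.

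For part (2), reflexivity is immediate: $u \oplus w \sim_K u \oplus w$ for any $w$ shows $u \approx_K u$. For symmetry, suppose $u_1 \approx_K u_2$, so $u_1 \oplus w \sim_K u_2 \oplus w$ for some $w$; since $\sim_K$ is an equivalence relation (Corollary \ref{70}(2)), we get $u_2 \oplus w \sim_K u_1 \oplus w$, hence $u_2 \approx_K u_1$.

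Transitivity is the only step requiring genuine bookkeeping, and it is the step I expect to be the main obstacle — though the obstacle is organizational rather than conceptual. Assume $u_1 \approx_K u_2$ and $u_2 \approx_K u_3$, witnessed by $w_1, w_2 \in \mathcal{PU}_\infty(V)$ so that $u_1 \oplus w_1 \sim_K u_2 \oplus w_1$ and $u_2 \oplus w_2 \sim_K u_3 \oplus w_2$. I would take $w_1 \oplus w_2$ as the candidate witness and compute, using Proposition \ref{71}(3) and (4) together with Corollary \ref{70}(2), a chain of the form
\begin{eqnarray*}
u_1 \oplus (w_1 \oplus w_2) &=& (u_1 \oplus w_1) \oplus w_2 \sim_K (u_2 \oplus w_1) \oplus w_2 \\
&=& u_2 \oplus (w_1 \oplus w_2) \sim_K u_2 \oplus (w_2 \oplus w_1) \\
&=& (u_2 \oplus w_2) \oplus w_1 \sim_K (u_3 \oplus w_2) \oplus w_1 \\
&=& u_3 \oplus (w_2 \oplus w_1) \sim_K u_3 \oplus (w_1 \oplus w_2),
\end{eqnarray*}
where each $\sim_K$ step applies Proposition \ref{71}(4) to one of the hypotheses (padded by the remaining summand) or Proposition \ref{71}(3) to commute the $w$'s, and the equalities are the associativity of Proposition \ref{71}(1). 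Concatenating these via the transitivity of $\sim_K$ (Corollary \ref{70}(2)) gives $u_1 \oplus (w_1 \oplus w_2) \sim_K u_3 \oplus (w_1 \oplus w_2)$, so that $u_1 \approx_K u_3$ with witness $w_1 \oplus w_2$. The care needed is purely in tracking which summand is being padded at each step so that Proposition \ref{71}(4) applies verbatim; the argument is a line-for-line transcription of the $K_1$ case with $0_n$ replacing $e^n$ and $\sim_K$ replacing $\sim_1$.
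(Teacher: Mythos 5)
Your proposal is correct and follows essentially the same route as the paper: reflexivity and symmetry via Corollary \ref{70}, and transitivity via the identical chain with witness $w_1 \oplus w_2$ using Proposition \ref{71}(1), (3), (4). In fact your treatment of part (1) — deducing $u \oplus w \sim_K v \oplus w$ from $u \sim_K v$ via Proposition \ref{71}(4) — is slightly more explicit than the paper's, which only spells out the reflexivity instance there.
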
 

\begin{proof}
By Proposition \ref{67}, we have $u\oplus w \sim_h u\oplus w$ for all $u,w\in \mathcal{PU}_\infty(V).$ Then by Corollary \ref{70}(1), we get $u\oplus w \sim_K u\oplus w$ so that $u\approx_K u$ for all $u \in \mathcal{PU}_\infty(V).$

Let $u_1,u_2,u_3 \in \mathcal{PU}_\infty(V)$ such that  $u_1 \approx_K u_2$ and $u_2 \approx_K u_3.$ Then $u_1\oplus w_1 \sim_K u_2 \oplus w_1$ and $u_2\oplus w_2\sim_K u_3\oplus w_2$ for some $w_1,w_2 \in \mathcal{PU}_\infty(V).$ By Corollary \ref{70}(2), we have $u_2\oplus w_1\sim_K u_1\oplus w_1$ so that $u_2 \approx_K u_1.$ By Corollary \ref{70} and by Proposition \ref{71}(4), we get $(u_1\oplus w_1)\oplus w_2 \sim_K (u_2 \oplus w_1)\oplus w_2$ and $(u_2\oplus w_2)\oplus w_1\sim_K (u_3\oplus w_2)\oplus w_1.$ Again applying Proposition \ref{71}(3), we have $w_1\oplus w_2 \sim_K w_2 \oplus w_1.$ Then
\begin{eqnarray*}
u_1\oplus (w_1\oplus w_2) &=& (u_1\oplus w_1)\oplus w_2 \sim_K  (u_2\oplus w_1)\oplus w_2 \\
&= & u_2\oplus (w_1\oplus w_2)\sim_K  u_2\oplus (w_2\oplus w_1) \\
&= & (u_2\oplus w_2)\oplus w_1 \sim_K  (u_3\oplus w_2)\oplus w_1 \\
&= & u_3\oplus (w_2\oplus w_1)\sim_K u_3\oplus (w_1\oplus w_2).
\end{eqnarray*} 
Finally, by Corollary \ref{70}(2), we conclude $u_1\oplus (w_1\oplus w_2)\sim_K  u_3\oplus (w_1\oplus w_2).$ Thus $u_1\approx_K u_3.$
\end{proof}

The relation $\approx_K$ enjoys the following properties: 

\begin{corollary}\label{73}
Let $V$ be an absolute matrix order unit space and let $u,v,w,u',v'\in \mathcal{U}_\infty(V)$. Then 
\begin{enumerate}
\item[$1.$] $u\approx_K u\oplus 0_n$ for all $n\in \mathbb{N}.$
\item[$2.$] $u\oplus v\approx_K v\oplus u.$
\item[$3.$] If $u\approx_K u'$ and $v\approx_K v',$ then $u\oplus v\approx_K u'\oplus v'.$
\end{enumerate}
\end{corollary}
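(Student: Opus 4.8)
The plan is to mirror the proof of Corollary \ref{62} (the unitary analogue) line for line, replacing the order unit $e^n$ by the zero order projection $0_n$ and the relations $\sim_1,\approx_1$ by $\sim_K,\approx_K$. Everything transfers because the structural lemmas for $\sim_K$ on $\mathcal{PU}_\infty(V)$ established in Propositions \ref{71} and \ref{72} and Corollary \ref{70} are the exact counterparts of Propositions \ref{40}, \ref{61} and Corollary \ref{35} used in the unitary case.

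For parts (1) and (2) I would first apply Proposition \ref{71}(2) and (3) to obtain $u \sim_K u\oplus 0_n$ and $u\oplus v \sim_K v\oplus u$, and then use Proposition \ref{72}(1), namely that $\sim_K$ implies $\approx_K$, to upgrade both statements to the required $\approx_K$ relations. No computation beyond these two invocations is needed.

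For part (3), suppose $u\approx_K u'$ and $v\approx_K v'$. By the definition of $\approx_K$ there exist witnesses $w_1,w_2\in \mathcal{PU}_\infty(V)$ with $u\oplus w_1 \sim_K u'\oplus w_1$ and $v\oplus w_2 \sim_K v'\oplus w_2$. The goal is to produce the single witness $w_1\oplus w_2$ and verify $(u\oplus v)\oplus(w_1\oplus w_2) \sim_K (u'\oplus v')\oplus(w_1\oplus w_2)$, which yields $u\oplus v\approx_K u'\oplus v'$ directly. The reshuffling of summands is handled by associativity (Proposition \ref{71}(1)) to regroup, by commutativity $v\oplus w_1 \sim_K w_1\oplus v$ (Proposition \ref{71}(3)) to slide $v$ past $w_1$, and by compatibility of $\sim_K$ with $\oplus$ (Proposition \ref{71}(4)) to feed in both hypotheses at once; the individual steps are then chained into one equivalence via transitivity of $\sim_K$ (Corollary \ref{70}(2)). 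The resulting display is formally identical to the one appearing in the proof of Corollary \ref{62}.

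The only point requiring care is the bracketing and ordering bookkeeping in this chain of $\sim_K$-equivalences, but there is no genuine obstacle: each rearrangement is licensed by one of the cited parts of Proposition \ref{71}, and the final collapse to a single $\sim_K$-equivalence is exactly Corollary \ref{70}(2). Hence the proof is routine once the unitary template is in hand.
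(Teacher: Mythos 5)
Your proposal is correct and follows essentially the same route as the paper: parts (1) and (2) via Proposition \ref{71}(2),(3) upgraded by Proposition \ref{72}(1), and part (3) by taking the combined witness $w_1\oplus w_2$ and chaining the rearrangements through Proposition \ref{71} and the transitivity in Corollary \ref{70}(2). The paper's proof is exactly this transcription of the Corollary \ref{62} argument, so there is nothing to add.
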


\begin{proof}
By Propositions \ref{71} and \ref{72}, it follows that $u\oplus v \approx_K v\oplus u$ and $u\approx_K u\oplus 0_n$ for all $u,v \in \mathcal{U}_\infty(V)$ and $n \in \mathbb{N}.$

Next, let $u,u',v$ and $v' \in \mathcal{PU}_\infty(V)$ such that  $u \approx_K u'$ and $v \approx_K v'.$ Then $u\oplus w_1 \sim_K u' \oplus w_1$ and $v\oplus w_2\sim_K v'\oplus w_2$ for some $w_1,w_2 \in \mathcal{PU}_\infty(V).$ By Proposition \ref{71}, we get 
\begin{eqnarray*}
(u\oplus v)\oplus (w_1\oplus w_2) &=& u\oplus (v\oplus w_1)\oplus w_2 \sim_K  u\oplus (w_1\oplus v)\oplus w_2 \\
&= & (u\oplus w_1)\oplus (v\oplus w_2)\sim_K (u'\oplus w_1)\oplus (v'\oplus w_2)  \\
&= & u'\oplus (w_1\oplus v')\oplus w_2 \sim_K  u'\oplus (v'\oplus w_1)\oplus w_2 \\
&= & (u'\oplus v')\oplus (w_1\oplus w_2).
\end{eqnarray*} 
Thus, by Corollary \ref{70}(2), we conclude that $(u\oplus v)\oplus (w_1\oplus w_2)\sim_K (u'\oplus v')\oplus (w_1\oplus w_2)$ so that $u\oplus v\approx_K u'\oplus v'.$
\end{proof}

Now, we define a binary operatoin in the family of equivalence classes of partial unitaries under $\approx_K.$ 

\begin{proposition}\label{74}
Let $V$ be an absolute matrix order unit space. For each $u,v\in \mathcal{PU}_\infty(V),$ let $[u] = \lbrace w\in \mathcal{PU}_\infty(V): w\approx_K u \rbrace$ and put $[u]+ [v] = [u\oplus v].$ Then, $+$ is a binary operation in the family of equivalence classes $(\mathcal{PU}_\infty(V)/\approx_K,+)$. Also:
\begin{enumerate}
\item[(1)] $[u] + [0] = [u]$ for all $u\in \mathcal{PU}_\infty(V);$
\item[(2)] $[u]+ [v] = [v]+ [u]$ for all $u,v\in \mathcal{PU}_\infty(V);$
\item[(3)] $[u]+ [w] = [v]+ [w]$ for $u,v,w\in \mathcal{PU}_\infty(V),$ then $[u] = [v].$ 
\end{enumerate}
Thus, $(\mathcal{PU}_\infty(V)\big/\approx_K, +)$ is an abelian semi-group satisfying the cancellation law.
\end{proposition}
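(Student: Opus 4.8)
The plan is to establish Proposition \ref{74} in close parallel to Proposition \ref{63}, since the algebraic structure of $(\mathcal{PU}_\infty(V)/\approx_K,+)$ mirrors that of $(\mathcal{U}_\infty(V)/\approx_1,+)$, with the order-projection zero $0$ playing the role that the order unit $e$ played in the unitary case. First I would verify that $+$ is well-defined on equivalence classes: given $u\approx_K u'$ and $v\approx_K v'$, Corollary \ref{73}(3) immediately yields $u\oplus v\approx_K u'\oplus v'$, so $[u\oplus v]=[u'\oplus v']$ and the operation does not depend on representatives. Associativity follows directly from Proposition \ref{71}(1), which gives $(u\oplus v)\oplus w=u\oplus(v\oplus w)$ as genuine equality of matrices, hence equality of classes.

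Next I would dispatch parts (1) and (2), which are essentially restatements of properties already in hand. For (1), the element $0$ here denotes the empty/degenerate partial unitary (or equivalently $0_n$ for suitable $n$), and Corollary \ref{73}(1) gives $u\approx_K u\oplus 0_n$ for all $n$, so $[u]+[0]=[u\oplus 0]=[u]$. For (2), commutativity is exactly Corollary \ref{73}(2): $u\oplus v\approx_K v\oplus u$ gives $[u]+[v]=[u\oplus v]=[v\oplus u]=[v]+[u]$.

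The only step requiring a genuine argument is the cancellation law (3), and this is where I would focus attention. Suppose $[u]+[w]=[v]+[w]$, i.e. $u\oplus w\approx_K v\oplus w$. By the definition of $\approx_K$, there exists some $x\in\mathcal{PU}_\infty(V)$ with $(u\oplus w)\oplus x\sim_K(v\oplus w)\oplus x$. Using associativity (Proposition \ref{71}(1)) I would regroup this as $u\oplus(w\oplus x)\sim_K v\oplus(w\oplus x)$, and since $w\oplus x$ is itself a partial unitary (Proposition \ref{65}(1)), this is precisely the witness showing $u\approx_K v$ by the definition of $\approx_K$. Hence $[u]=[v]$, establishing the cancellation law. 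The main subtlety here, as in the unitary case, is recognizing that the witness $x$ for the relation $\approx_K$ can be absorbed together with $w$ into a single partial unitary witness $w\oplus x$; the closure property in Proposition \ref{65}(1) is exactly what makes this legitimate.

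Combining these four facts, $(\mathcal{PU}_\infty(V)/\approx_K,+)$ is an abelian semigroup with identity $[0]$ satisfying cancellation, which is the assertion of the proposition. I anticipate no serious obstacle: every ingredient has already been proved in Corollaries \ref{70}, \ref{73} and Proposition \ref{71}, and the argument is a faithful transcription of the proof of Proposition \ref{63} with $e$ replaced by $0$ and $\approx_1$ replaced by $\approx_K$.
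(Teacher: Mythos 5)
Your proposal is correct and follows essentially the same route as the paper: well-definedness and parts (1)--(2) from Corollary \ref{73}, associativity from Proposition \ref{71}(1), and cancellation by unwinding $u\oplus w\approx_K v\oplus w$ to $u\oplus(w\oplus x)\sim_K v\oplus(w\oplus x)$ and reading $w\oplus x$ as the witness for $u\approx_K v$. No gaps.
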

\begin{proof}
By Corollary \ref{73}(3), it follows that $+$ is well-defined on $\mathcal{PU}_\infty(V)\big/\approx_K.$ Note that (1) and (2) immediately follow from \ref{73}(1) and (2) respectively. Next, we prove (3).

Let $u,v,w\in \mathcal{PU}_\infty(V)$ such that $[u]+ [w] = [v]+ [w].$ Then $u\oplus w \approx_K v\oplus w$ so that $u\oplus (w \oplus x) \sim_K v\oplus (w \oplus x)$ for some $x\in \mathcal{PU}_\infty(V).$ Thus, $u\approx_K v$ so that $[u] = [v].$
\end{proof}

Finally, the following result shows the importance of path homotopy of partial unitaries to define a group.

\begin{theorem}\label{75}
Let $V$ be an absolute matrix order unit space and consider $\mathcal{PU}_\infty(V) \times \mathcal{PU}_\infty(V).$ For all $u_1,u_2,v_1,v_2 \in \mathcal{PU}_\infty(V),$ we define $(u_1,v_1) \equiv_K (u_2,v_2)$ if and only if $u_1\oplus v_2 \approx_K u_2\oplus v_1.$ Then, $\equiv_K$ is an equivalence relation on $\mathcal{PU}_\infty(V) \times \mathcal{PU}_\infty(V).$ Consider: $$K(V)=\lbrace [(u,v)]: u,v\in \mathcal{PU}_\infty(V)\rbrace,$$ where $[(u,v)]$ is the equivalence class of $(u,v)$ in $(\mathcal{PU}_\infty(V) \times \mathcal{PU}_\infty(V),\equiv_K).$ For all $u_1,u_2,v_1,v_2 \in \mathcal{PU}_\infty(V),$ we write: $$[(u_1,v_1)]+ [(u_2,v_2)] = [(u_1\oplus u_2, v_1\oplus v_2)].$$ Then $(K(V),+)$ is an abelian group.
\end{theorem}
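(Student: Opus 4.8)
The plan is to transcribe the proof of Theorem~\ref{64} essentially line by line, replacing unitaries by partial unitaries, the relations $\sim_1,\approx_1,\equiv_1$ by $\sim_K,\approx_K,\equiv_K$, and the order unit $e$ by the zero element. Every ingredient used in the $K_1$ argument has an exact partial-unitary counterpart already proved in this section, so the task is to assemble them in the same order.

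First I would establish that $\equiv_K$ is an equivalence relation. Reflexivity and symmetry are immediate from Proposition~\ref{72}(2) together with Corollary~\ref{73}. For transitivity, suppose $(u_1,v_1) \equiv_K (u_2,v_2)$ and $(u_2,v_2) \equiv_K (u_3,v_3)$, so that $u_1 \oplus v_2 \approx_K u_2 \oplus v_1$ and $u_2 \oplus v_3 \approx_K u_3 \oplus v_2$. Using Proposition~\ref{72}(2) with Corollary~\ref{73}(2),(3), I would deduce
$$(u_1 \oplus v_3) \oplus (u_2 \oplus v_2) \approx_K (u_3 \oplus v_1) \oplus (u_2 \oplus v_2),$$
and then cancel the common summand $u_2 \oplus v_2$ by the cancellation law of Proposition~\ref{74}(3) to obtain $u_1 \oplus v_3 \approx_K u_3 \oplus v_1$, i.e. $(u_1,v_1) \equiv_K (u_3,v_3)$.

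Next I would check that $+$ is well defined on $K(V)$: if $[(u_1,v_1)]=[(u_1',v_1')]$ and $[(u_2,v_2)]=[(u_2',v_2')]$, then $u_1 \oplus v_1' \approx_K u_1' \oplus v_1$ and $u_2 \oplus v_2' \approx_K u_2' \oplus v_2$, and applying Proposition~\ref{72}(2) and Corollary~\ref{73}(2),(3) once more yields $(u_1 \oplus u_2) \oplus (v_1' \oplus v_2') \approx_K (u_1' \oplus u_2') \oplus (v_1 \oplus v_2)$, which is the required equality of sums. Commutativity of $+$ then follows from Corollary~\ref{73}(2),(3), and associativity is immediate from Proposition~\ref{71}(1). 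For the identity, the role played by $e$ in Theorem~\ref{64} is now taken by the zero element: since the zero matrix is an order projection, $0 \in \mathcal{PU}_\infty(V)$, and Corollary~\ref{73}(1),(3) gives $(u \oplus 0) \oplus (v \oplus 0) \approx_K u \oplus v$, so $[(0,0)]$ is neutral. Finally $[(v,u)]$ is the inverse of $[(u,v)]$, because $[(u,v)]+[(v,u)] = [(u \oplus v, v \oplus u)] = [(0,0)]$, the last equality reducing via Corollary~\ref{73}(1),(2) to the commutativity relation $u \oplus v \approx_K v \oplus u$.

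I anticipate no genuine obstacle, since each step is a direct translation of the corresponding step for $K_1(V)$. The only point demanding a moment's care is the identity/inverse computation: one must confirm that $0$ is a legitimate partial unitary and that $[(u\oplus v, v \oplus u)] = [(0,0)]$, but both reduce to routine applications of the stabilization and commutativity relations recorded in Corollary~\ref{73}.
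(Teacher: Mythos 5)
Your proposal is correct and follows essentially the same route as the paper's own proof: the paper likewise transcribes the argument for Theorem \ref{64}, establishing transitivity of $\equiv_K$ via Proposition \ref{72}(2), Corollary \ref{73}(2),(3) and the cancellation law of Proposition \ref{74}(3), and then verifying well-definedness, commutativity, associativity, the identity $[(0,0)]$ and the inverse $[(v,u)]$ exactly as you describe. Your extra remark that $0$ is a legitimate partial unitary is a sensible sanity check that the paper leaves implicit.
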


\begin{proof}
It follows from Proposition \ref{72}(2) and Corollary \ref{73} that the relation $\equiv_K$ on $\mathcal{PU}_\infty(V) \times \mathcal{PU}_\infty(V)$ is reflexive and symmetric. Let $(u_1,v_1) \equiv_K (u_2,v_2)$ and $(u_2,v_2) \equiv_K (u_3,v_3)$ for some $u_1,u_2,u_3,v_1,v_2,v_3 \in \mathcal{PU}_\infty(V).$ Then $u_1\oplus v_2 \approx_K u_2\oplus v_1$ and $u_2\oplus v_3 \approx_K u_3\oplus v_2.$ By Proposition \ref{72}(2) and by Corollary \ref{73}(2) and (3), we get that $(u_1\oplus v_3)\oplus (u_2 \oplus v_2)\approx_K (u_3\oplus v_1)\oplus (u_2\oplus v_2)$ so that $[u_1\oplus v_3]+ [u_2 \oplus v_2] = [u_3\oplus v_1]+[u_2\oplus v_2].$ Then, by Proposition \ref{74}(3), we conclude that $[u_1\oplus v_3] = [u_3\oplus v_1]$ so that $u_1\oplus v_3 \approx_K u_3\oplus v_1.$ Thus $(u_1,v_1) \equiv_K (u_3,v_3)$ so that $\equiv_K$ is transitive. Hence, $\equiv_K$ is an equivalence relation on $\mathcal{PU}_\infty(V) \times \mathcal{PU}_\infty(V).$

Now, we show that $+$ is well defined on $K(V).$ Let $[(u_1,v_1)]=[(u_1',v_1')]$ and $[(u_2,v_2)]=[(u_2',v_2')].$ Then $(u_1,v_1)\equiv_K(u_1',v_1')$ and $(u_2,v_2)\equiv_K(u_2',v_2')$ so that $u_1\oplus v_1'\approx_K u_1'\oplus v_1$ and  $u_2\oplus v_2'\approx_K u_2'\oplus v_2.$ Again by Proposition \ref{72}(2) and by Corollary \ref{73}(2) and (3), we get that $(u_1\oplus u_2)\oplus (v_1'\oplus v_2')\approx_K (u_1'\oplus u_2')\oplus (v_1\oplus v_2).$ Thus $[(u_1,v_1)]+ [(u_2,v_2)]=[(u_1',v_1')]+ [(u_2',v_2')]$ so that $+$ is well defined.

By Corollary \ref{73}(2) and (3), we have: $$(u_1\oplus u_2)\oplus (v_2\oplus v_1) \approx_K (u_2\oplus u_1)\oplus (v_1\oplus v_2)$$ for all $u_1,u_2,v_1,v_2\in \mathcal{PU}_\infty(V),$ so that $+$ is commutative in $K(V).$

Let $u,v\in \mathcal{PU}_\infty(V).$ By Corollary \ref{73}(1) and (3), we have $(u\oplus 0)\oplus (v\oplus 0) \approx_K u\oplus v.$ Thus $[(0,0)]$ is an identity element in $K(V).$  

Associativity of $+$ on $K(V)$ follows from Proposition \ref{71}(1).

For any $u,v \in \mathcal{PU}_\infty(V),$ we have $[(u\oplus v,v\oplus u)]=[(0,0)].$ Hence $[(v,u)]$ is an inverse element of $[(u,v)]$ in $K(V).$
\end{proof}

\begin{remark}
The abelian group $K(V)$ is called the $K(V)$-group of the absolute matrix order unit space $V.$ If $A$ is a $C^*$-algebra, then $K(A)$ is Grothendieck group of $A$ for partial unitary elements. Thus $K$-group for absolute matrix order unit spaces is a generalization of $K$-group for $C^*$-algebras.
\end{remark}

\subsection{Order structure in $K$}

In this subsection, we prove that $K(V)$ also bears order structure on it. We start with the following discussion:

Let $A$ be a unital $C^*$-algebra. Then 
\begin{enumerate}
\item[(1)] $p\sim_h q$ in $\mathcal{OP}(A)$ implies $p\sim q.$
\item[(2)] $\vert \cdot \vert$ is a continuous function.
\end{enumerate}

For proof of part (1), see \cite[Proposition 2.2.7]{RLL00}. Proof of part (2) can also be found somewhere in literature. However, for the sake of completeness, we provide its proof. For proof of part (2), we use functional calculus of self-adjoint elements in a C$^*$-algebra \cite{RJ83}. It is sufficient to prove that $\sqrt{a_n} \longrightarrow \sqrt{a}$ whenever $a_n,a\in A^+$ such that $a_n \longrightarrow a.$ Let $a_n,a\in A^+$ such that $a_n \longrightarrow a.$ Then there exists $M>0$ such that $\Vert a_n\Vert_n,\Vert a\Vert \leq M$ so that $\sigma(a_n),\sigma(a) \subset [0,M]$ for all $n \in \mathbb{N}.$ Put $f(t)=\sqrt{t}.$ By Weierstrass Approximation theorem, there exists a sequence of polynomials $\lbrace p_m(t)\rbrace$ converging uniformly to $f$ on $[0,M].$ Note that $\Vert p_m(a)-f(a)\Vert,\Vert p_m(a_n)-f(a_n)\Vert \leq \Vert p_m-f\Vert_\infty=\sup\lbrace \vert p_m(t)-f(t)\vert:t\in [0,M]\rbrace.$ Let $\epsilon > 0.$ Choose $m_0 \in \mathbb{N}$ such that $\Vert p_{m_0}-f\Vert_\infty < \frac{\epsilon}{3}.$ Since $m_0$ is fixed and $a_n \longrightarrow a,$ we can also choose $n_0 \in \mathbb{N}$ such that $\Vert p_{m_0}(a_n)-p_{m_0}(a)\Vert < \frac{\epsilon}{3}$ for all $n\geq n_0.$ Then

\begin{eqnarray*}
\Vert \sqrt{a_n}-\sqrt{a}\Vert &=& \Vert f(a_n)-f(a)\Vert \\
&\leq & \Vert f(a_n)-p_{m_0}(a_n)\Vert +\Vert p_{m_0}(a_n)-p_{m_0}(a)\Vert +\Vert p_{m_0}(a)-f(a)\Vert \\
&<& \frac{\epsilon}{3} + \frac{\epsilon}{3} + \frac{\epsilon}{3} \\
&=& \epsilon
\end{eqnarray*}
for all $n\geq n_0.$ Thus $\sqrt{a_n} \longrightarrow \sqrt{a}.$

Next, we prove that $(K(V),K(V)^+)$ is also an ordered abelian group. For that, we need to prove the following result:

\begin{theorem}\label{84}
	Let $V$ be an absolute matrix order unit space. Put $K(V)^+ = \lbrace [(v,0)]: v\in \mathcal{PU}_\infty(V)\rbrace.$ Then 
	\begin{enumerate}
		\item[(1)]$K(V)^+$ is a group cone in $K(V)$. 
		\item[(2)] $K(V)^+$ is generating.
	\end{enumerate} 
	Moreover, if $V$ satisfies: 
	\begin{enumerate}
	\item[(a)] $p\sim_h q$ in $\mathcal{OP}_n(V)$ implies $p\sim q;$  
	\item[(b)] $e^n$ is finite; and 
	\item[(c)] $\vert \cdot \vert_n$ is continuous
	\end{enumerate}
	for all $n\in \mathbb{N}$ and $p,q \in \mathcal{OP}_n(V).$ 
	Then we also have:
	\begin{enumerate}
	\item[(3)]$K(V)^+$ is proper. 
	\end{enumerate}
\end{theorem}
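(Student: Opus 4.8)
The plan is to treat (1) and (2) as routine bookkeeping in the group $K(V)$ and to concentrate on (3), which is the only substantive assertion.

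For (1), I would observe that $0_1\in\mathcal{PU}_1(V)$ gives $[(0,0)]\in K(V)^+$, and that for $v_1,v_2\in\mathcal{PU}_\infty(V)$ one has $[(v_1,0)]+[(v_2,0)]=[(v_1\oplus v_2,0)]$, which lies in $K(V)^+$ because $v_1\oplus v_2\in\mathcal{PU}_\infty(V)$ by Proposition \ref{65}(1); thus $K(V)^+$ contains the identity and is closed under addition, so it is a group cone. For (2), given any $[(u,v)]\in K(V)$, Corollary \ref{73}(1) yields $[(u,v)]=[(u,0)]-[(v,0)]$ with both summands in $K(V)^+$, so $K(V)^+$ generates $K(V)$.

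For (3) I would show that $g\in K(V)^+\cap(-K(V)^+)$ forces $g=0$. Writing $g=[(v,0)]$ with $v\in\mathcal{PU}_\infty(V)$, the hypothesis $-g=[(0,v)]\in K(V)^+$ means $[(0,v)]=[(w,0)]$ for some $w\in\mathcal{PU}_\infty(V)$, which unwinds to $w\oplus v\approx_K 0$. Peeling back the definitions of $\approx_K$ and $\sim_K$ in turn, there exist $c\in\mathcal{PU}_\infty(V)$, an integer $k$, and a \emph{single} continuous path $f:[0,1]\to\mathcal{PU}_k(V)$ whose endpoints are $w\oplus v\oplus c$ and $0\oplus c$, each padded by zeros up to size $k$. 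The crux is to transport this one homotopy into the world of order projections and there apply finiteness. First I would conjugate $f$ by a fixed permutation unitary $\pi\in\mathcal{U}_k(\mathbb{C})$ (legitimate by Proposition \ref{68}(2)) so that in both endpoints the block coming from $c$ occupies the same leading position; this is the manoeuvre that lets me use condition (a) only once rather than through a transitive chain, so that condition (T) is never needed. Applying the continuous map $|\cdot|_k$ of hypothesis (c) to $\pi^*f\pi$, and using Lemma \ref{95} together with Definition \ref{152}(3), produces a path homotopy in $\mathcal{OP}_k(V)$ between $P:=|c|\oplus|w|\oplus|v|\oplus 0$ and $Q:=|c|\oplus 0$; by hypothesis (a) this gives $P\sim Q$ in one step.

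By construction $Q\le P$ at the same matrix level, and $P\le e^k$, so hypothesis (b) (finiteness of $e^k$, together with the standard heredity that order projections dominated by a finite order projection are finite, exactly as in the $K_0$ analogue) forces $P=Q$. Cancelling the common leading $|c|$-block yields $|w|\oplus|v|=0$, hence $|v|_n=0$. Finally, since $v$ is partial unitary, $|v^*|_n=|v|_n=0$, so Proposition \ref{91}(3) applied to $\pm v$ shows that both $\begin{bmatrix} 0 & v \\ v^* & 0 \end{bmatrix}$ and its negative lie in $M_{2n}(V)^+$; properness of $M_{2n}(V)^+$ (Proposition \ref{81}) then gives $v=0$, whence $g=[(0,0)]=0$. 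The main obstacle is precisely this transport step: aligning the block positions so that a single application of (a) suffices, and then combining finiteness of $e^k$ with the implication $|v|_n=0\Rightarrow v=0$. Everything in (1), (2), and the algebraic reductions opening (3) is routine.
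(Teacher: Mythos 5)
Your argument for (3) is in substance the same as the paper's: unwind $\approx_K$ and $\sim_K$ to a single path homotopy in some $\mathcal{PU}_k(V)$, push it through the continuous map $\vert\cdot\vert_k$ to get a homotopy of order projections, apply hypothesis (a) exactly once, use finiteness of $e^k$ to upgrade $P\sim Q$ with $Q\le P$ to $P=Q$, cancel, and conclude $\vert v\vert_n=0$, hence $v=0$; your explicit derivation of $\vert v\vert_n=0\Rightarrow v=0$ via Proposition \ref{91}(3) and properness of $M_{2n}(V)^+$ is a nice elementary substitute for the paper's citation of an external result. The one step to repair is the alignment manoeuvre: conjugating both endpoints by a \emph{single} permutation $\pi$ cannot move the $c$-block to a common position when the zero paddings preceding $c$ have different sizes, since a fixed coordinate permutation relocates the two (differently placed) $c$-blocks to two different places; either choose the zero representatives in $[(v,0)]$ and $[(0,v)]$ to have matching sizes so that the $c$-blocks are aligned from the outset (this is what the paper implicitly does by writing $u\oplus v\approx_K 0_m\oplus 0_n$), or apply Proposition \ref{68}(2) with two \emph{different} permutations $\alpha\sim_h\beta$ in $\mathcal{U}_k(\mathbb{C})$, which is precisely what that proposition allows and which still uses hypothesis (a) only once.
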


\begin{proof}
	It is routine to verify (1) and (2). Next, we prove (3). Assume that $V$ satisfies $(a),(b)$ and $(c).$ Let $g\in K(V)^+ \cap -K(V)^+.$ There exist $u \in \mathcal{PU}_m(V)$ and $v\in \mathcal{PU}_n(V)$ such that $g=[(u,0)] = [(0,v)]$. Then $(u,0) \equiv (0,v)$ so that $u \oplus v \approx_K 0_m \oplus 0_n$. Thus $u\oplus v \oplus w \sim_h 0_m \oplus 0_n \oplus w$ for some $w\in \mathcal{OP}_l(V).$ By continuity of $\vert \cdot \vert_{m+n+l},$ it follows that $\vert u\vert_m \oplus \vert v\vert_n \oplus \vert w\vert_l \sim_h 0_m \oplus 0_n \oplus \vert w\vert_l.$ Next, by assumption, we have $\vert u\vert_m \oplus \vert v\vert_n \oplus \vert w\vert_l \sim 0_m \oplus 0_n \oplus \vert w\vert_l.$ Since $e^{m+n+l}$ is finte, and $\vert u\vert_m \oplus \vert v\vert_n \oplus \vert w\vert_l$ and $0_m \oplus 0_n \oplus \vert w\vert_l \in \mathcal{OP}_{m+n+l}(V)$ such that $0_m \oplus 0_n \oplus \vert w\vert_l \leq \vert u\vert_m \oplus \vert v\vert_n \oplus \vert w\vert_l,$ by \cite[Corollary 5.1]{PI19}, we conclude that $\vert u\vert_m \oplus \vert v\vert_n \oplus \vert w\vert_l = 0_m \oplus 0_n \oplus \vert w\vert_l$. Then $\vert u\vert_m \oplus \vert v\vert_n =0_m\oplus 0_n$ so that $\vert u\vert_m = 0_m$ and $\vert v\vert_n =0_n.$ By \cite[Proposition 2.4]{PI19}, we have $u=0$ and $v=0.$ Thus $g=0.$ 	
\end{proof}

\begin{corollary}
Let $V$ be an absolute matrix order unit space. Then by Theorem \ref{84}, we conclude that $(K(V),K(V)^+)$ is an ordered abelian group. 
\end{corollary}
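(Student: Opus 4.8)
The plan is to read the conclusion off directly from the structural facts already assembled, since in the sense used here (following \cite[Subsection 4.1]{KO21} and exactly mirroring the corollary after Theorem \ref{94} for $K_1$) an ordered abelian group is an abelian group $G$ equipped with a generating group cone $G^+$. Thus I would need precisely three ingredients, all of which are in hand: that $(K(V),+)$ is an abelian group, that $K(V)^+$ is a group cone in $K(V)$, and that $K(V)^+$ is generating.

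First I would invoke Theorem \ref{75}, which already furnishes the abelian group structure on $(K(V),+)$, with identity $[(0,0)]$ and with $[(v,u)]$ the inverse of $[(u,v)]$. The order is then the one induced by the cone: for $g,h\in K(V)$ one declares $g\le h$ iff $h-g\in K(V)^+$. For this relation to be a translation-invariant preorder compatible with $+$ it suffices that $K(V)^+$ contain the identity and be closed under addition, that is, that $K(V)^+$ be a group cone; this is exactly Theorem \ref{84}(1). Next I would record that $K(V)^+$ is generating, i.e. $K(V)=K(V)^+-K(V)^+$, which is Theorem \ref{84}(2); concretely every class decomposes as $[(u,v)]=[(u,0)]-[(v,0)]$ with both summands in $K(V)^+$, using $[(0,v)]=-[(v,0)]$ (since $[(v,v)]=[(0,0)]$ by Corollary \ref{73}(1)--(2)). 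Combining Theorem \ref{75} with Theorem \ref{84}(1)--(2) then yields that $(K(V),K(V)^+)$ is an ordered abelian group.

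There is no genuine obstacle here, as the corollary merely assembles results already established; the only point demanding care is the precise meaning of ``ordered abelian group''. The statement deliberately does not assert properness of $K(V)^+$, and correspondingly does not impose the hypotheses (a)--(c) of Theorem \ref{84}. If one wished the order to be antisymmetric (a genuine partial order rather than a preorder), one would additionally have to invoke Theorem \ref{84}(3), whose proof rests on continuity of $\vert\cdot\vert_n$, finiteness of $e^n$, the passage from $\sim_h$ to $\sim$ on order projections, and \cite[Corollary 5.1]{PI19}. Since the present statement parallels the $K_1$ corollary and claims only the generating-group-cone structure, Theorem \ref{84}(1)--(2) together with Theorem \ref{75} is all that is required, and I would stop there.
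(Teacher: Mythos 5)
Your proposal is correct and follows exactly the route the paper intends: the corollary is an immediate assembly of Theorem \ref{75} (abelian group structure) with Theorem \ref{84}(1)--(2) (group cone, generating), and the paper offers no further argument. Your observation that properness is deliberately not claimed here --- so that the extra hypotheses (a)--(c) of Theorem \ref{84}(3) are not needed, in parallel with the $K_1$ corollary following Theorem \ref{94} --- is an accurate reading of how the paper uses the term ``ordered abelian group'' in this context.
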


\subsection{Functoriality of $K$}

Let $V$ be an absolute matrix order unit space. Then $v\longmapsto[(v,0)]$ defines a map $\Omega_V:\mathcal{PU}_\infty(V)\to K(V)$. In the next result, we describe the functorial nature of $K$. For this, first we prove the following commutative property of $K(V)$:

\begin{theorem}\label{82}
Let $V$ and $W$ be absolute matrix order unit spaces and let $\phi: V \to W$ be a unital completely $\vert \cdot \vert$-preserving map. Then there exists a unique group homomorphism $K(\phi): K(V)\to K(W)$ such that the following diagram commutes: 
\end{theorem}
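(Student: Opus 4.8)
The plan is to mirror the construction of $K_1(\phi)$ given in Theorem \ref{79}, since $K(\phi)$ is the analogue for partial unitaries of what $K_1(\phi)$ is for unitaries. First I would verify that $\phi$ carries partial unitaries to partial unitaries: given $v \in \mathcal{PU}_n(V)$, we have $\vert v\vert_n = \vert v^*\vert_n \in \mathcal{OP}_n(V)$, and since $\phi$ is unital and completely $\vert \cdot \vert$-preserving, $\vert \phi(v)\vert_n = \phi(\vert v\vert_n)$ and $\vert \phi(v)^*\vert_n = \phi(\vert v^*\vert_n)$; the key point I must supply is that $\phi$ maps order projections to order projections, which follows because $\phi$ preserves absolute values and the order unit, so $\vert e^n - 2\phi(\vert v\vert_n)\vert_n = \phi(\vert e^n - 2\vert v\vert_n\vert_n) = \phi(e^n) = e_W^n$. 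Hence $\phi(\mathcal{PU}_\infty(V)) \subset \mathcal{PU}_\infty(W)$.

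Next I would establish that $\phi$ is a complete contraction, exactly as in Theorem \ref{79}: invoking \cite[Theorems 3.7 and 3.8]{K19} to get contractivity on each $M_n(V)_{sa}$, and then using Theorem \ref{1}(5) together with the identity $\phi(v^*) = \phi(v)^*$ to pass the $2n \times 2n$ off-diagonal trick through $\phi$ and conclude $\Vert \phi(v)\Vert_n \le \Vert v\Vert_n$. Continuity of $\phi_n$ then guarantees that $\phi$ respects path homotopy: if $v \sim_h w$ in $\mathcal{PU}_n(V)$ via a continuous path $f$, then $\phi \circ f$ is a continuous path of partial unitaries from $\phi(v)$ to $\phi(w)$, so $\phi(v) \sim_h \phi(w)$ in $\mathcal{PU}_n(W)$.

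With these two facts in hand, I would define $K(\phi)([(u,v)]) = [(\phi(u),\phi(v))]$ for $[(u,v)] \in K(V)$. Well-definedness is the crux: if $[(u_1,v_1)] = [(u_2,v_2)]$, then after suitable stabilization $u_1 \oplus v_2 \oplus w \sim_h u_2 \oplus v_1 \oplus w$ for some $w \in \mathcal{PU}_\infty(V)$, and applying $\phi$ together with the homotopy-preservation above yields $\phi(u_1) \oplus \phi(v_2) \oplus \phi(w) \sim_h \phi(u_2) \oplus \phi(v_1) \oplus \phi(w)$, which gives $[(\phi(u_1),\phi(v_1))] = [(\phi(u_2),\phi(v_2))]$. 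The homomorphism property follows because $\phi$ commutes with $\oplus$ (it acts entrywise on block-diagonal matrices), so $\phi(u_1 \oplus u_2) = \phi(u_1) \oplus \phi(u_2)$, and the additive law on $K(V)$ is defined through $\oplus$.

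Finally, for uniqueness I would argue exactly as in Theorem \ref{79}: any group homomorphism $\mathcal{H}$ making the diagram commute must agree with $K(\phi)$ on the image of $\Omega_V$, and since every element of $K(V)$ has the form $[(u,v)] = \Omega_V(u) - \Omega_V(v)$, additivity forces $\mathcal{H} = K(\phi)$. I expect the main obstacle to be the well-definedness step, specifically confirming that $\phi$ preserves the order-projection condition defining partial unitaries (so that $\phi$ genuinely lands in $\mathcal{PU}$) and that path homotopies are transported correctly under stabilization; the contraction and uniqueness arguments are then routine transcriptions of the $K_1$ case.
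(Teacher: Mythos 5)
Your proposal is correct and follows essentially the same route as the paper's proof: map $\mathcal{PU}_\infty(V)$ into $\mathcal{PU}_\infty(W)$ (the paper cites \cite[Theorem 3.7(1)]{K19} for the preservation of order projections, whereas you verify it directly via $\vert e_W^n - 2\phi(\vert v\vert_n)\vert_n = \phi(\vert e_V^n - 2\vert v\vert_n\vert_n) = e_W^n$, which is a valid substitute), use the contractivity argument from Theorem \ref{79} to transport path homotopies, define $K(\phi)$ on representatives, and prove well-definedness, additivity and uniqueness exactly as in the $K_1$ case. No gaps.
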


$$\begin{tikzcd}
\mathcal{PU}_\infty(V)\arrow{r}{\phi}\arrow[swap]{d}{\Omega_V}
& \mathcal{PU}_\infty(W)\arrow{d}{\Omega_W} \\
K(V)\arrow[swap]{r}{K(\phi)}
& K(W)
\end{tikzcd}$$

\begin{proof}
By \cite[Theorem 3.7(1)]{K19}, we get that $\phi(\mathcal{OP}_\infty(V)) \subset \mathcal{OP}_\infty(W)$ so that $\phi(\mathcal{PU}_\infty(V)) \subset \mathcal{PU}_\infty(W)$. Let $v,w \in \mathcal{PU}_{\infty}(V)$ such that $v \sim_h w$. Then $\phi(v) \sim_h \phi(w).$ 

Next, put $K(\phi)([(v,w)])=[(\phi(v),\phi(w))]$ for each $[(v,w)]\in K(V)$. We show that $K(\phi)$ is well-defined. Let $[(v_1,w_1)] = [(v_2,w_2)]$ for some $v_1,v_2,w_1,w_2 \in \mathcal{PU}_\infty(V).$ Then there exists $u\in \mathcal{PU}_\infty(V)$ such that $v_1\oplus w_2 \oplus u \sim_h v_2 \oplus w_1 \oplus u.$ Thus $\phi(v_1)\oplus \phi(w_2) \oplus \phi(u) \sim_h \phi(v_2) \oplus \phi(w_1) \oplus \phi(u)$ so that $[(\phi(v_1),\phi(w_1))] = [(\phi(v_2),\phi(w_2))].$ Hence $K(\phi)$ is well-defined. For all $[(v_1,w_1)],[(v_2,w_2)] \in K(V)$, we have that 

\begin{eqnarray*}
K(\phi)([(v_1,w_1)]+[(v_2,w_2)]) &=& K(\phi)([(v_1 \oplus v_2,w_1 \oplus w_2)]) \\
&=& [(\phi(v_1 \oplus v_2),\phi(w_1 \oplus w_2))] \\
&=& [(\phi(v_1) \oplus \phi(v_2),\phi(w_1) \oplus \phi(w_2))] \\
&=& [(\phi(v_1),\phi(w_1))] + [(\phi(v_2),\phi(w_2))] \\
&=& K(\phi)([(v_1,w_1)]) + K(\phi)([(v_2,w_2)])
\end{eqnarray*}

so that $K(\phi)$ is a group homomorphism. By construction $K$ satisfies the diagram.

\textbf{Uniqueness of $K(\phi)$:-} Let $\mathcal{H}: K(V) \to K(W)$ be a group homomorphism satisfying the same diagram. Then $K(\phi)(\Omega_V(v))=\Omega_V(\phi(v))=\mathcal{H}(\Omega_V(v))$ for all $v \in \mathcal{PU}_\infty(V)$. Thus we get that

\begin{eqnarray*}
K(\phi)([(v,w)]) &=& K(\phi)([(v,0)]-[(w,0)]) \\
&=& K(\phi)(\Omega_V(v)-\Omega_V(w)) \\
&=& K(\phi)(\Omega_V(v)) - K(\phi)(\Omega_V(w)) \\
&=& \mathcal{H}(\Omega_V(v)) - \mathcal{H}(\Omega_V(w)) \\
&=& \mathcal{H}(\Omega_V(v)-\Omega_V(w)) \\
&=& \mathcal{H}([(v,w)])
\end{eqnarray*} 

for all $[(v,w)] \in K(V)$. Hence $K(\phi) = \mathcal{H}$.
\end{proof}

Let $V$ and $W$ be absolute matrix order unit spaces. We denote the zero group homomorphism between $K(V)$ and $K(W)$ by $0_{K(W),K(V)}.$ The identity map on $K(V)$ is denoted by $I_{K(V)}$.

\begin{corollary}\label{83}
Let $U,V$ and $W$ be absolute matrix order unit spaces and let $\phi: U \to V$ and $\psi: V \to W$ be unital completely $\vert \cdot \vert$-preserving maps. Then 
\begin{enumerate}
\item[(a)]$K(I_V)=I_{K(V)};$
\item[(b)]$K(\psi \circ \phi) = K(\psi) \circ K(\phi);$
\item[(c)]$K(0_{W,V}) = 0_{K(W),K(V)}.$
\end{enumerate}
\end{corollary}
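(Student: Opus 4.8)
The plan is to mirror the proof of Corollary \ref{80} for parts (a) and (b), invoking the uniqueness clause of Theorem \ref{82}, and to dispose of part (c) by a direct computation that exploits the fact that the zero map carries every partial unitary to $0$.

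For (a), I would note that for arbitrary $v,w \in \mathcal{PU}_\infty(V)$ the induced map of Theorem \ref{82} gives
\[ K(I_V)([(v,w)]) = [(I_V(v), I_V(w))] = [(v,w)], \]
so $K(I_V)$ acts as the identity on every class. Since both $K(I_V)$ and $I_{K(V)}$ are group homomorphisms completing the commuting square of Theorem \ref{82} (with $\phi = I_V$), the uniqueness assertion there forces $K(I_V) = I_{K(V)}$. For (b), I would compute, for any $[(u,v)] \in K(U)$,
\[ K(\psi \circ \phi)([(u,v)]) = [(\psi(\phi(u)), \psi(\phi(v)))] = K(\psi)\bigl(K(\phi)([(u,v)])\bigr), \]
the last equality following by applying the defining formula for the induced map twice. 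Thus $K(\psi \circ \phi)$ and $K(\psi) \circ K(\phi)$ agree as homomorphisms fitting the relevant diagram, and uniqueness in Theorem \ref{82} again yields equality.

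For (c), I would first observe that $0_{W,V}$ sends every element of $V$, and in particular every partial unitary, to $0 \in W$, and that $0$ is a partial unitary since $\vert 0 \vert_n = \vert 0^* \vert_n = 0 \in \mathcal{OP}_n(W)$ (indeed $0$ is an order projection, as $0^* = 0$ and $\vert 2\cdot 0 - e^n \vert_n = \vert e^n \vert_n = e^n$). Because $0_{W,V}$ is trivially continuous and completely $\vert \cdot \vert$-preserving and maps partial unitaries to partial unitaries, the well-definedness and homomorphism arguments of Theorem \ref{82} apply to it verbatim, producing $K(0_{W,V})$. For every $[(v,w)] \in K(V)$ this homomorphism satisfies
\[ K(0_{W,V})([(v,w)]) = [(0_{W,V}(v),\, 0_{W,V}(w))] = [(0,0)], \]
and by Theorem \ref{75} the class $[(0,0)]$ is the neutral element of $K(W)$. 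A group homomorphism that is identically the neutral element is the zero homomorphism, whence $K(0_{W,V}) = 0_{K(W),K(V)}$.

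The only genuine subtlety, and the step I would flag as the main obstacle, is part (c): the zero map is \emph{not} unital, so Theorem \ref{82} does not apply to it as literally stated. This is precisely why Corollary \ref{80} contains no zero-map clause for $K_1$ -- the image $0$ fails to be a unitary, so $K_1(0_{W,V})$ is not even defined. In the present $K$-setting the obstruction disappears, because $K(V)$ is built from partial unitaries and $0$ \emph{is} a partial unitary; one must simply verify that the construction of the induced map survives the loss of unitality, which it does since only the continuity of the map and its sending partial unitaries to partial unitaries were used.
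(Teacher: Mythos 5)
Your proposal is correct and follows essentially the same route as the paper: the same class-level computations for (a) and (b) combined with the uniqueness clause of Theorem \ref{82}, and the same evaluation $K(0_{W,V})([(v,w)])=[(0,0)]$ for (c). Your extra observation on (c) is in fact \emph{more} careful than the paper's own proof, which cites Theorem \ref{82} for the non-unital zero map without comment; your check that $0$ is a partial unitary and that the construction of the induced homomorphism only needs contractivity and preservation of partial unitaries is exactly the justification the paper leaves implicit.
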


\begin{proof}
\begin{enumerate}
\item[(a)] Let $v,w \in \mathcal{PU}_\infty(V)$. Then
\begin{eqnarray*}
K(I_V)([(v,w)]) &=& [(I_V(v),I_V(w))] \\
&=& [(v,w)] 
\end{eqnarray*}

so that by Theorem \ref{82}, $K(I_V)=I_{K(V)}$.
\item[(b)] For any $[(v,w)] \in K(U)$, we get that
\begin{eqnarray*}
K(\psi \circ \phi)([(v,w)]) &=& [(\psi \circ \phi(v),\psi \circ \phi(w))] \\
&=& [(\psi(\phi(v)),\psi(\phi(w)))] \\
&=& K(\psi)[(\phi(v),\phi(w))] \\
&=& K(\psi)(K(\phi)([(v,w)])) \\
&=& K(\psi) \circ K(\phi)([(v,w)]). 
\end{eqnarray*} 
Thus by Theorem \ref{82}, we conclude that $K(\psi \circ \phi) = K(\psi) \circ K(\phi)$.
\item[(c)] $K(0_{W,V})([(v,w)]) = [(0_{W,V}(v),0_{W,V}(w))]=[(0,0)]$ for all $[(v,w)] \in K(V)$. Thus again using \ref{82}, we get that $K(0_{W,V}) = 0_{K(W),K(V)}$.
\end{enumerate}
\end{proof}

\begin{remark}
It follows from Corollary \ref{83} that $K$ is a functor from category of absolute matrix order unit spaces with morphisms as unital completely $\vert \cdot \vert$-preserving maps to category of abelian groups.
\end{remark}

In the end of the subsection, we note the following isomorphic property of $K.$ 

\begin{corollary}
Let $V$ and $W$ be isomorphic absolute matrix order unit spaces (isomorphic in the sense that there exists a unital, bijective completely $\vert \cdot \vert$-preserving map between $V$ and $W$). Then $K(V)$ and $K(W)$ are group isomorphic. 
\end{corollary}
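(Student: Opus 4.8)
The plan is to reduce this to the functoriality of $K$ already established in Corollary \ref{83}, exactly mirroring the argument used for $K_1$ in the preceding isomorphism corollary. First I would fix a unital, bijective, completely $\vert \cdot \vert$-preserving map $\phi : V \to W$ witnessing the isomorphism, and verify that its set-theoretic inverse $\phi^{-1} : W \to V$ is again a unital completely $\vert \cdot \vert$-preserving map. Unitality of $\phi^{-1}$ is immediate from $\phi(e_V) = e_W$. For the $\vert \cdot \vert$-preserving property, I would apply $\phi_n^{-1}$ to the identity $\phi_n(\vert v \vert_n) = \vert \phi_n(v) \vert_n$ and substitute $w = \phi_n(v)$ to obtain $\phi_n^{-1}(\vert w \vert_n) = \vert \phi_n^{-1}(w) \vert_n$ for every $n \in \mathbb{N}$, which is precisely the claim.

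Once $\phi^{-1}$ is known to be a morphism in our category, the rest is formal. Since $\phi^{-1} \circ \phi = I_V$ and $\phi \circ \phi^{-1} = I_W$, I would apply the functorial identities of Corollary \ref{83}(a) and (b) to conclude $K(\phi^{-1}) \circ K(\phi) = K(\phi^{-1} \circ \phi) = K(I_V) = I_{K(V)}$ and symmetrically $K(\phi) \circ K(\phi^{-1}) = I_{K(W)}$. Hence $K(\phi) : K(V) \to K(W)$ is a group homomorphism admitting $K(\phi^{-1})$ as a two-sided inverse, so it is a group isomorphism with $K(\phi)^{-1} = K(\phi^{-1})$, giving $K(V) \cong K(W)$.

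I do not anticipate any genuine obstacle here: the entire analytic and order-theoretic content has already been absorbed into the construction of $K$ and its functoriality (Theorem \ref{82} and Corollary \ref{83}). The only point requiring a line of verification is that the inverse of a unital, bijective, completely $\vert \cdot \vert$-preserving map is again of the same type, and this is a purely algebraic manipulation of the defining identity. It is worth noting that bijectivity of $\phi$ is what makes the argument go through, since it guarantees that $\phi^{-1}$ exists and is a morphism of the same kind; the complete isometry property recorded in \cite[Theorem 4.6]{K19} is consistent with this but is not needed for the proof.
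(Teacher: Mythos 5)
Your proposal is correct and follows essentially the same route as the paper: pass to $\phi^{-1}$, note it is again a unital completely $\vert\cdot\vert$-preserving map, and invoke the functoriality identities of Corollary \ref{83}(a) and (b) to exhibit $K(\phi^{-1})$ as a two-sided inverse of $K(\phi)$. The only difference is that the paper simply asserts that $\phi^{-1}$ is of the same type, whereas you supply the short verification by applying $\phi_n^{-1}$ to the identity $\phi_n(\vert v\vert_n)=\vert\phi_n(v)\vert_n$; this is a harmless (and welcome) addition.
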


\begin{proof}
Let $\phi:V \to W$ be unital completely $\vert \cdot \vert$-preserving map. Then $\phi^{-1}$ is also unital completely $\vert \cdot \vert$-preserving map. Since $\phi^{-1} \circ \phi = I_{V}$ and $\phi \circ \phi^{-1} = I_{W}$, by Corollary \ref{83}(a) and (b), we get that $K(\phi^{-1}) \circ K(\phi) = I_{K(V)}$ and $K(\phi) \circ K(\phi^{-1}) = I_{K(W)}$. Thus $K(\phi):K(V) \to K(W)$ is a surjective group isomorphism and $K(\phi)^{-1} = K(\phi^{-1})$. Hence $K(V)$ and $K(W)$ are group isomorphic. 
\end{proof}

\section{Relation among $K_0,K_1$ and $K$}

In this section, we derive a relation among $K_0,K_1$ and $K.$ Therefore, we start this section with the following result:

\begin{proposition}\label{76}
Let $(V,e)$ be an absolute matrix order unit space and let $v_i \in \mathcal{PI}_n(V)$ for $i=1,2,3,...,m$ such that $ v_i \perp vj$ for all $i\neq j.$  If $\displaystyle\sum_{i=1}^{m}\vert v_i\vert = e^n = \displaystyle\sum_{i=1}^{m}\vert v_i^*\vert,$ then $\displaystyle\sum_{i=1}^{m} v_i \in \mathcal{U}_n(V).$
\end{proposition}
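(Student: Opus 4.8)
The goal is to show that $v := \sum_{i=1}^m v_i$ satisfies $\vert v\vert_n = e^n = \vert v^*\vert_n$, i.e. that $v \in \mathcal{U}_n(V)$. Since each $v_i \in \mathcal{PI}_n(V)$, the elements $p_i := \vert v_i\vert_n$ and $q_i := \vert v_i^*\vert_n$ are order projections, and by Remark \ref{92} the mutual orthogonality $v_i \perp v_j$ translates into $p_i \perp p_j$ and $q_i \perp q_j$ for all $i \neq j$, while the hypothesis becomes $\sum_{i=1}^m p_i = e^n = \sum_{i=1}^m q_i$. The plan is to prove the additivity identities $\vert v\vert_n = \sum_i p_i$ and $\vert v^*\vert_n = \sum_i q_i$ by induction on the partial sums $s_k := \sum_{i=1}^k v_i$; once these are in hand, the conclusion is immediate from the hypothesis.

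The engine of the induction is Proposition \ref{88}, which upgrades $s_k \perp v_{k+1}$ into $\vert s_{k+1}\vert_n = \vert s_k\vert_n + \vert v_{k+1}\vert_n$ together with the adjoint identity. Thus the only thing to verify at each step is that the partial sum $s_k$ stays orthogonal to the next term $v_{k+1}$. Via Remark \ref{92} and the inductive identities $\vert s_k\vert_n = \sum_{i\le k} p_i$ and $\vert s_k^*\vert_n = \sum_{i\le k} q_i$, this reduces to the two \emph{positive-element} orthogonalities $\sum_{i\le k} p_i \perp p_{k+1}$ and $\sum_{i\le k} q_i \perp q_{k+1}$.

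The key step, and the point I expect to be the main obstacle, is therefore to establish this orthogonality of partial sums. For this I would isolate the following fact: if $a \in M_n(V)^+$, $p \in \mathcal{OP}_n(V)$ and $a \le e^n - p$, then $a \perp p$. I expect to prove it as follows. First, $e^n - p$ is again an order projection, since $\vert 2(e^n-p)-e^n\vert_n = \vert e^n - 2p\vert_n = \vert 2p - e^n\vert_n = e^n$, and moreover $p \perp (e^n - p)$, because $\vert p-(e^n-p)\vert_n = \vert 2p-e^n\vert_n = e^n = p + (e^n-p)$. Since $V$ is an absolute matrix order unit space, $\perp = \perp_\infty^a$ on $M_n(V)^+$, so $p \perp_\infty^a (e^n - p)$; by the very definition of $\perp_\infty^a$ and the symmetry of $\perp_\infty$, every $a_1$ with $0 \le a_1 \le a \le e^n - p$ is $\perp_\infty$ to every $p_1$ with $0 \le p_1 \le p$, which is exactly $a \perp_\infty^a p$, i.e. $a \perp p$.

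With this fact available the induction closes cleanly. The bound $\sum_{i\le k} p_i \le e^n - p_{k+1}$ holds because $\sum_{i=1}^m p_i = e^n$ forces $\sum_{i\le k} p_i + p_{k+1} \le e^n$ (the omitted terms $p_{k+2}, \dots, p_m$ being positive), whence $\sum_{i\le k} p_i \perp p_{k+1}$; the identical argument with the $q_i$ gives $\sum_{i\le k} q_i \perp q_{k+1}$, and Remark \ref{92} then yields $s_k \perp v_{k+1}$. Running the induction up to $k = m$ produces $\vert v\vert_n = \sum_i p_i = e^n$ and $\vert v^*\vert_n = \sum_i q_i = e^n$, so that $v = \sum_{i=1}^m v_i \in \mathcal{U}_n(V)$, as required.
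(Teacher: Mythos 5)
Your proposal is correct, and its overall skeleton (induction on the partial sums $s_k$, with Proposition \ref{88} converting orthogonality into additivity of $\vert\cdot\vert_n$, so that $\vert v\vert_n=\sum_i\vert v_i\vert_n=e^n$ and likewise for $v^*$) matches the paper's proof, which handles $m=2$ first and then peels off the last summand from $w_{m-1}=\sum_{i=1}^{m-1}v_i$. Where you genuinely diverge is in the one nontrivial step, namely showing that the partial sum is orthogonal to the next term. The paper gets $\sum_{i\le k}\vert v_i\vert_n\perp\vert v_{k+1}\vert_n$ directly from the additivity axiom for $\perp$ on positive elements in an absolutely ordered space (the cited condition (5) of Definition 3.4 in \cite{K18}), together with Proposition \ref{89} to keep the partial sums inside $\mathcal{PI}_n(V)$; this uses only the mutual orthogonality of the $v_i$ and not the hypothesis $\sum_i\vert v_i\vert_n=e^n$. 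You instead exploit that hypothesis to obtain the domination $\sum_{i\le k}p_i\le e^n-p_{k+1}$ and then convert domination below $e^n-p_{k+1}$ into orthogonality to $p_{k+1}$ via $p_{k+1}\perp(e^n-p_{k+1})$ and the identification $\perp=\perp_\infty^a$ together with the hereditary nature of $\perp_\infty^a$. Both routes are sound; yours is self-contained relative to the facts actually recalled in this paper and is a nice illustration of how the $\perp=\perp_\infty^a$ axiom substitutes for additivity of orthogonality, while the paper's is shorter and isolates the orthogonality of the partial sums as a fact independent of the completeness hypothesis $\sum_i\vert v_i\vert_n=e^n$. One cosmetic remark: your auxiliary lemma does not actually need $e^n-p$ to be an order projection, only that $p\perp(e^n-p)$, which you verify directly.
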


\begin{proof}
Let $v_i \in \mathcal{PI}_n(V)$ for $i=1,2,3,...,m$ such that $ v_i \perp v_j$ for all $i\neq j.$ Assume that $\displaystyle\sum_{i=1}^{m}\vert v_i\vert = e^n = \displaystyle\sum_{i=1}^{m}\vert v_i^*\vert.$ We prove the above result in the following two cases:

\textbf{Case 1: $m = 2$.} Let $v_1,v_2 \in \mathcal{PI}_n(V)$ such that $v_1 \perp v_2$ and $\vert v_1\vert_n + \vert v_2\vert_n = e^n = \vert v_1^*\vert_n + \vert v_2^*\vert_n.$ By Proposition \ref{88}, we have $\vert v_1 + v_2 \vert_n = \vert v_1\vert_n +\vert v_2\vert_n$ and $\vert v_1^* + v_2^* \vert_n = \vert v_1^*\vert_n + \vert v_2^* \vert_n.$ Thus $\vert v_1+v_2\vert_n=e^n$ and $\vert (v_1+v_2)^*\vert_n=e^n$ so that $v_1+v_2 \in \mathcal{U}_n(V).$ 

\textbf{Case 2: $m > 2$.} Put $w_{m-1}=\displaystyle \sum_{i=1}^{m-1} v_i.$ Thus, by Proposition \ref{89}, we have $w_{m-1}\in \mathcal{PI}_n(V).$ Next, by \cite[Definition 3.4(5)]{K18} and by Remark \ref{92}, we also note that $\vert w_{m-1}\vert_n = \displaystyle\sum_{i=1}^{m-1} \left \vert v_i \right \vert_n \perp \vert v_m\vert_n$ and $\vert w_{m-1}^*\vert_n = \displaystyle\sum_{i=1}^{m-1} \left \vert v_i^* \right \vert_n \perp \vert v_m^*\vert_n$ so that $w_{m-1} \perp v_m.$ Then, again by Proposition \ref{88}, we get $\vert w_{m-1}\vert_n+\vert v_m\vert_n = \displaystyle\sum_{i=1}^{m}\vert v_i\vert = e^n$ and  $\vert w_{m-1}^*\vert_n+\vert v_m^*\vert_n = \displaystyle\sum_{i=1}^{m}\vert v_i^*\vert = e^n.$ Finally, by case (1), we conclude that $\displaystyle\sum_{i=1}^{m} v_i = w_{m-1}+v_m \in \mathcal{U}_n(V).$ 
\end{proof}

The following result describes that every partial unitary is associated to a unitary element.

\begin{lemma}\label{78}
Let $(V,e)$ be an absolute matrix order unit space and let $v\in \mathcal{PU}_n(V)$ for some $n \in \mathbb{N}.$ Then $v \perp e^n-\vert v\vert_n$ and $v\pm (e^n-\vert v\vert_n)\in \mathcal{U}_n(V).$ 
\end{lemma}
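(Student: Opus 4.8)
The plan is to reduce both assertions to the single orthogonality relation $\vert v\vert_n \perp (e^n - \vert v\vert_n)$, which is essentially a restatement of the defining identity of an order projection, and then to read off everything from the additivity of absolute values on orthogonal pairs (Proposition \ref{88}). First I would fix notation by writing $p := \vert v\vert_n = \vert v^*\vert_n$, which is an order projection because $v \in \mathcal{PU}_n(V)$. I would record the known fact that every order projection satisfies $0 \le p \le e^n$ (see \cite{K18, PI19}); in particular $e^n - p \ge 0$, so $e^n - p$ is self-adjoint and $\vert e^n - p\vert_n = e^n - p$. This positivity is what lets me treat $e^n - \vert v\vert_n$ as a positive element throughout.

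For the orthogonality statement I would apply Remark \ref{92} to the pair $v$ and $e^n - p$. Since $\vert v\vert_n = \vert v^*\vert_n = p$ and $(e^n - p)^* = e^n - p$ with $\vert e^n - p\vert_n = e^n - p$, both clauses of Remark \ref{92} collapse to the single positive-element requirement $p \perp (e^n - p)$. By the definition of orthogonality for positive elements, $p \perp (e^n - p)$ holds if and only if $\vert p - (e^n - p)\vert_n = p + (e^n - p)$, that is $\vert 2p - e^n\vert_n = e^n$ — which is exactly the order-projection identity satisfied by $p$. Hence $v \perp (e^n - p) = e^n - \vert v\vert_n$, proving the first assertion.

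For the unitarity statement I would feed the orthogonality $v \perp (e^n - p)$ into Proposition \ref{88}, which yields, for both signs simultaneously, $\vert v \pm (e^n - p)\vert_n = \vert v\vert_n + \vert e^n - p\vert_n = p + (e^n - p) = e^n$ together with $\vert v^* \pm (e^n - p)^*\vert_n = \vert v^*\vert_n + \vert e^n - p\vert_n = e^n$. Because $(e^n - p)^* = e^n - p$, the second equality is precisely $\vert (v \pm (e^n - p))^*\vert_n = e^n$. Thus $\vert v \pm (e^n - p)\vert_n = e^n = \vert (v \pm (e^n - p))^*\vert_n$, so $v \pm (e^n - \vert v\vert_n) \in \mathcal{U}_n(V)$.

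The only mildly delicate point is the reduction in the first step: one must check that both clauses of Remark \ref{92} genuinely reduce to the same positive-element orthogonality $p \perp (e^n - p)$, which uses crucially that a partial unitary has $\vert v\vert_n = \vert v^*\vert_n$ and that $e^n - p$ is self-adjoint and positive. Once $v \perp (e^n - p)$ is established, the second assertion is a direct application of Proposition \ref{88} requiring no further computation.
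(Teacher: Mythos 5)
Your proposal is correct and follows essentially the same route as the paper: the orthogonality is obtained from the order-projection identity $\vert 2p - e^n\vert_n = e^n$ via Remark \ref{92}, and the unitarity from additivity of $\vert\cdot\vert_n$ on orthogonal pairs. The only cosmetic difference is that the paper invokes Proposition \ref{76} for the last step, whereas you apply Proposition \ref{88} directly — but the two-element case of Proposition \ref{76} is itself proved by exactly that application, so the arguments coincide.
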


\begin{proof}
Let $v \in \mathcal{PU}_n(V).$ Then $\vert v\vert_n = \vert v^*\vert_n \in \mathcal{OP}_n(V)$ so that $\vert v\vert = \vert v^*\vert_n \perp e^n-\vert v\vert_n.$ Thus, by Remark \ref{92}, we get that $v \perp \pm(e^n-\vert v\vert_n).$ Next, $v\pm(e^n- \vert v \vert_n)\in \mathcal{U}_n(V)$ follows from the Proposition \ref{76}.
\end{proof}

Next result provides an alternative characterization of partial unitaries.

\begin{proposition}\label{93}
Let $V$ be an absolute matrix order unit space and let $v \in \mathcal{PI}_n(V)$ for some $n \in \mathbb{N}.$ Then $v \in \mathcal{PU}_n(V)$ if and only if $\vert v^*\vert_n \perp e^n-\vert v\vert_n$ and $v+e^n-\vert v\vert_n \in \mathcal{U}_n(V).$
\end{proposition}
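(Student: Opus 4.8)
The plan is to run both implications through the two order projections $p := \vert v\vert_n$ and $q := \vert v^*\vert_n$ attached to the partial isometry $v$, together with the element $w := e^n - \vert v\vert_n$. The observation used throughout is that an order projection is orthogonal to its complement: from $\vert 2p - e^n\vert_n = e^n$ and $p + (e^n - p) = e^n$ one reads off $\vert p - (e^n - p)\vert_n = \vert 2p - e^n\vert_n = e^n = p + (e^n - p)$, which is precisely $p \perp (e^n - p)$ by the definition of orthogonality of positive elements.

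For necessity, suppose $v \in \mathcal{PU}_n(V)$, so that $p = q$ is an order projection. Then the observation above gives $\vert v^*\vert_n = p \perp e^n - p = e^n - \vert v\vert_n$, while $v + (e^n - \vert v\vert_n) \in \mathcal{U}_n(V)$ is exactly the $+$ case of Lemma \ref{78}. This settles the forward direction at once.

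For sufficiency, assume $v \in \mathcal{PI}_n(V)$ together with the two stated conditions. Since $0 \le p \le e^n$, the element $w = e^n - p$ is positive and self-adjoint, so $\vert w\vert_n = \vert w^*\vert_n = w$. The key step is to promote the scalar-level orthogonalities to an orthogonality of the elements $v$ and $w$ themselves: by the observation, $p \perp w$ always holds, and the first hypothesis is exactly $q \perp w$; hence Remark \ref{92} delivers $v \perp w$. Proposition \ref{88} then computes $\vert v + w\vert_n = p + w = e^n$ and, crucially, $\vert (v + w)^*\vert_n = q + w$. The second hypothesis $v + w \in \mathcal{U}_n(V)$ forces $\vert (v+w)^*\vert_n = e^n$, so $q + (e^n - p) = e^n$ and therefore $q = p$. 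Consequently $\vert v\vert_n = \vert v^*\vert_n$ is an order projection, and $v \in \mathcal{PU}_n(V)$.

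I expect the one genuinely delicate point to be this promotion step in the converse: in order to invoke Proposition \ref{88} one needs the element-level relation $v \perp w$, and Remark \ref{92} shows that it splits into exactly the two positive-element orthogonalities $p \perp w$ and $q \perp w$, of which the first is automatic and the second is precisely the first hypothesis. Once $v \perp w$ is secured, reading off $q = p$ from the co-absolute value $\vert (v + w)^*\vert_n = q + w$ and the unitarity of $v + w$ is immediate.
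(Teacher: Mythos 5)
Your proof is correct and follows essentially the same route as the paper: the forward direction via Lemma \ref{78}, and the converse by computing $\vert (v+e^n-\vert v\vert_n)^*\vert_n = \vert v^*\vert_n + e^n - \vert v\vert_n = e^n$ and cancelling. In fact you are more explicit than the paper at the one delicate point, namely justifying the additivity of the co-absolute value by first upgrading the two positive-element orthogonalities to $v \perp e^n - \vert v\vert_n$ via Remark \ref{92} before invoking Proposition \ref{88}, a step the paper leaves implicit.
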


\begin{proof}
Let $n \in \mathbb{N}$ and $v \in \mathcal{PI}_n(V).$ Then $\vert v\vert_n$ and $\vert v^*\vert_n \in \mathcal{OP}_n(V).$ First, assume that $v \in \mathcal{PU}_n(V),$ then $\vert v\vert_n = \vert v^*\vert_n.$ Thus, by Lemma \ref{78}, we conclude that $\vert v^*\vert_n \perp e^n-\vert v\vert_n$ and $v+e^n- \vert v \vert_n \in \mathcal{U}_n(V).$ Conversely, assume that $\vert v^*\vert_n \perp e^n-\vert v \vert_n$ and $v+e^n-\vert v \vert_n \in \mathcal{U}_n(V).$ Then $e^n=\vert (v+e^n-\vert v)^*\vert_n=\vert v^*+(e^n-\vert v\vert_n)\vert_n=\vert v^*\vert_n + e^n-\vert v\vert_n$ so that $\vert v\vert_n = \vert v^*\vert_n.$ Hence $v \in \mathcal{PU}_n(V).$   
\end{proof}

The result stated below tells that converse part of the Lemma \ref{78} also holds.

\begin{corollary}
Let $(V,e)$ be an absolute matrix order unit space and let $n \in \mathbb{N}.$ Then $v\in \mathcal{PU}_n(V)$ if and only if $v \perp e^n-\vert v\vert_n$ and $v\pm(e^n-\vert v\vert_n)\in \mathcal{U}_n(V).$ 
\end{corollary}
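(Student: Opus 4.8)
The plan is to treat the two implications separately. The forward direction is immediate: if $v\in\mathcal{PU}_n(V)$, then Lemma \ref{78} delivers both $v\perp e^n-\vert v\vert_n$ and $v\pm(e^n-\vert v\vert_n)\in\mathcal{U}_n(V)$ at once, so there is nothing further to do. The real content is the converse, and the strategy is to extract, from the orthogonality together with the unitarity of $v+(e^n-\vert v\vert_n)$, the two facts that $\vert v\vert_n=\vert v^*\vert_n$ and that this common value is an order projection.

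For the converse I would write $w:=e^n-\vert v\vert_n$, which is self-adjoint, and first run the additivity of absolute values on orthogonal elements. Feeding $v\perp w$ into Proposition \ref{88} gives $\vert v+w\vert_n=\vert v\vert_n+\vert w\vert_n$ and $\vert v^*+w^*\vert_n=\vert v^*\vert_n+\vert w^*\vert_n$. Since $v+w\in\mathcal{U}_n(V)$ forces $\vert v+w\vert_n=e^n=\vert (v+w)^*\vert_n$, and since $w^*=w$, these two identities read $\vert v\vert_n+\vert w\vert_n=e^n=\vert v^*\vert_n+\vert w\vert_n$. Comparing them yields $\vert v\vert_n=\vert v^*\vert_n$, while the first gives $\vert w\vert_n=e^n-\vert v\vert_n=w$; as $\vert w\vert_n\in M_n(V)^+$, this last equality forces $w=e^n-\vert v\vert_n\ge 0$.

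With the positivity of $e^n-\vert v\vert_n$ in hand, I would revisit the orthogonality $v\perp w$ through Remark \ref{92}. Because $w\ge 0$ we have $\vert w\vert_n=\vert w^*\vert_n=w$, so the remark produces $\vert v\vert_n\perp(e^n-\vert v\vert_n)$. Unwinding the definition of orthogonality of positive elements, this is $\left\vert\vert v\vert_n-(e^n-\vert v\vert_n)\right\vert_n=\vert v\vert_n+(e^n-\vert v\vert_n)$, that is, $\left\vert 2\vert v\vert_n-e^n\right\vert_n=e^n$; together with the self-adjointness of $\vert v\vert_n$ this says exactly that $\vert v\vert_n$ is an order projection. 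Since I have already shown $\vert v\vert_n=\vert v^*\vert_n$, the common value $\vert v\vert_n=\vert v^*\vert_n$ is an order projection, which is precisely the statement $v\in\mathcal{PU}_n(V)$.

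I expect the only subtle point to be the positivity step $e^n-\vert v\vert_n\ge 0$: without it, Remark \ref{92} would only return orthogonality of $\vert v\vert_n$ with $\bigl\vert e^n-\vert v\vert_n\bigr\vert_n$ rather than with $e^n-\vert v\vert_n$ itself, and the order-projection characterization via $\perp$ would break down. It is also worth remarking that only the $+$ half of the hypothesis $v\pm(e^n-\vert v\vert_n)\in\mathcal{U}_n(V)$ is actually used in the argument; the $-$ half is redundant for the converse and is retained only to mirror the conclusion of Lemma \ref{78}.
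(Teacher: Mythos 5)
Your proof is correct, and the forward implication is handled exactly as in the paper (a direct appeal to Lemma \ref{78}). For the converse, however, you take a genuinely different and in fact more self-contained route. The paper disposes of this direction in one line: from $v\perp e^n-\vert v\vert_n$ it asserts $\vert v^*\vert_n\perp e^n-\vert v\vert_n$ and then invokes Proposition \ref{93}. That reduction is slicker but quietly leans on two things: first, Proposition \ref{93} carries the standing hypothesis $v\in\mathcal{PI}_n(V)$ (so that $\vert v\vert_n$ is already known to be an order projection), which is not among the corollary's assumptions; second, passing from $v\perp e^n-\vert v\vert_n$ to $\vert v^*\vert_n\perp e^n-\vert v\vert_n$ via Remark \ref{92} needs $\vert e^n-\vert v\vert_n\vert_n=e^n-\vert v\vert_n$, i.e.\ the positivity $e^n-\vert v\vert_n\ge 0$. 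Your argument supplies both missing pieces explicitly: the additivity from Proposition \ref{88} combined with $\vert v+w\vert_n=e^n=\vert(v+w)^*\vert_n$ gives $\vert v\vert_n=\vert v^*\vert_n$ and $\vert w\vert_n=w\ge 0$ simultaneously, and then $\vert v\vert_n\perp e^n-\vert v\vert_n$ unwinds to $\bigl\vert 2\vert v\vert_n-e^n\bigr\vert_n=e^n$, which is precisely the order-projection condition. So your proof trades the paper's appeal to Proposition \ref{93} for a direct verification that actually closes the small gaps in the paper's one-line reduction; your closing observations (that the $-$ half of the unitarity hypothesis is unused in the converse, and that the positivity step is the crux) are both accurate.
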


\begin{proof}
First, assume that $v \perp e^n-\vert v\vert_n$ and $v\pm(e^n-\vert v\vert_n)\in \mathcal{U}_n(V).$ Since $v\perp e^n-\vert v\vert_n,$ we have $\vert v^*\vert_n \perp e^n-\vert v\vert_n.$ By Proposition \ref{93}, we get that $v\in \mathcal{PU}_n(V).$ Next, converse part immediately follows from the Lemma \ref{78}.
\end{proof}

Next result tells that every partial unitary is arithmetic mean of two unitary elements.

\begin{proposition}\label{77}
Let $(V,e)$ be an absolute matrix order unit space and let $v\in \mathcal{PU}_n(V)$ for some $n \in \mathbb{N}.$ Then, there exist $v_1,v_2\in \mathcal{U}_n(V)$ such that $v=\frac{1}{2}(v_1+v_2)$ and $v_1-v_2 \perp v_1+v_2.$
\end{proposition}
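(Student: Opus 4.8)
The plan is to exhibit the two unitaries explicitly, using the associated unitary elements already produced by Lemma \ref{78}. Given $v \in \mathcal{PU}_n(V)$, I would set $v_1 = v + (e^n - \vert v\vert_n)$ and $v_2 = v - (e^n - \vert v\vert_n)$. Lemma \ref{78} asserts precisely that $v \pm (e^n - \vert v\vert_n) \in \mathcal{U}_n(V)$, so both $v_1, v_2 \in \mathcal{U}_n(V)$ with no further work. The arithmetic-mean identity is then immediate: adding the two expressions gives $v_1 + v_2 = 2v$, hence $v = \tfrac{1}{2}(v_1 + v_2)$.

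The only point requiring an argument is the orthogonality relation, and it reduces to the orthogonality already supplied by Lemma \ref{78}. Subtracting the two definitions gives $v_1 - v_2 = 2(e^n - \vert v\vert_n)$, while $v_1 + v_2 = 2v$. Lemma \ref{78} provides $v \perp e^n - \vert v\vert_n$; since $\perp$ is symmetric (being defined through a symmetric condition on the associated positive elements) and is invariant under scalar multiplication by Proposition \ref{90}, I would conclude that $2v \perp 2(e^n - \vert v\vert_n)$, that is, $v_1 + v_2 \perp v_1 - v_2$, equivalently $v_1 - v_2 \perp v_1 + v_2$. This closes the argument.

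I do not anticipate a substantial obstacle: Lemma \ref{78} already packages the difficult content — that $v$ is orthogonal to $e^n - \vert v\vert_n$ and that the two combinations $v \pm (e^n - \vert v\vert_n)$ are genuinely unitary — so the proposition is essentially a repackaging of that lemma together with the scalar-invariance of orthogonality from Proposition \ref{90}. The one subtlety worth stating carefully is the passage from $v \perp e^n - \vert v\vert_n$ to the orthogonality of the doubled elements $v_1 \pm v_2$, which is exactly where Proposition \ref{90} (and the symmetry of $\perp$) is invoked.
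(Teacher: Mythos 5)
Your proposal is correct and follows essentially the same route as the paper: the paper takes the identical decomposition $v_{1,2} = v \mp (e^n - \vert v\vert_n)$ (with the roles of $v_1$ and $v_2$ swapped, which is immaterial), obtains unitarity from Proposition \ref{76} — which is exactly the content Lemma \ref{78} packages for you — and likewise concludes the orthogonality $v_1 - v_2 \perp v_1 + v_2$ from Proposition \ref{90}. No gap; the sign convention and the appeal to Lemma \ref{78} in place of a direct citation of Proposition \ref{76} are the only cosmetic differences.
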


\begin{proof}
Let $n \in \mathbb{N}$ and $v \in \mathcal{PU}_n(V).$ Since $\vert v\vert_n = \vert v^*\vert_n \in \mathcal{OP}_n(V),$ we get that $v$ and $e^n-\vert v\vert_n \in \mathcal{PI}_n(V)$ with $\vert v\vert_n \perp e^n-\vert v\vert_n.$ Put $v_1 = v-e^n+\vert v\vert_n$ and $v_2 = v+e^n-\vert v\vert_n.$ Then, by Proposition \ref{76}, we conclude that $v_1,v_2 \in \mathcal{U}_n(V).$ Note that $v=\frac{1}{2}(v_1+v_2), \vert v_1 -v_2\vert_n = 2 (e^n-\vert v\vert_n),\vert v_1+v_2\vert_n=2\vert v\vert_n.$ By Proposition \ref{90}, we also note that $v_1-v_2 \perp v_1+v_2.$
\end{proof}

In the following result, we prove that for any $n\in \mathbb{N}, \sim_h$ in $\mathcal{PU}_n(V)$ is preserved by $\vert \cdot \vert_n$ in $\mathcal{OP}_n(V)$ and $\mathcal{U}_n(V)$ in terms of corresponding elements under the continuity of $\vert \cdot \vert_n.$

\begin{proposition}\label{87}
Let $(V,e)$ be an absolute matrix order unit space and let $n\in \mathbb{N}.$ If $u\sim_h v$ in $\mathcal{PU}_n(V)$ and $\vert \cdot\vert_n$ is continuous, then $\vert u\vert_n \sim_h \vert v\vert_n $ and $u\pm(e^n-\vert u\vert_n) \sim_h v\pm(e^n-\vert v\vert_n)$ in $\mathcal{OP}_n(V)$ and $\mathcal{U}_n(V)$ respectively. 
\end{proposition}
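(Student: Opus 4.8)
The plan is to transport the homotopy witnessing $u\sim_h v$ through the two continuous operations appearing in the statement. Since $u\sim_h v$ in $\mathcal{PU}_n(V)$, I would first fix a continuous path $f:[0,1]\to \mathcal{PU}_n(V)$ with $f(0)=u$ and $f(1)=v$, and then build the required paths by composing $f$ with $\vert\cdot\vert_n$ and with the affine maps $x\mapsto x\pm(e^n-\vert x\vert_n)$.

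For the order-projection claim, I would set $g(t)=\vert f(t)\vert_n$. As $f$ is norm continuous and $\vert\cdot\vert_n$ is continuous by hypothesis, $g$ is continuous as a composition. Since each $f(t)$ is a partial unitary, Definition \ref{86} gives $\vert f(t)\vert_n\in \mathcal{OP}_n(V)$, so $g$ takes values in $\mathcal{OP}_n(V)$; with $g(0)=\vert u\vert_n$ and $g(1)=\vert v\vert_n$ this yields $\vert u\vert_n\sim_h \vert v\vert_n$ in $\mathcal{OP}_n(V)$.

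For the unitary claim, I would define, for each sign, $h_{\pm}(t)=f(t)\pm(e^n-\vert f(t)\vert_n)$. The map $t\mapsto e^n-\vert f(t)\vert_n$ is continuous (using continuity of $\vert\cdot\vert_n$ together with continuity of the vector-space operations in the order-unit norm of Theorem \ref{1}), hence so is $h_{\pm}$. By Lemma \ref{78}, since $f(t)\in \mathcal{PU}_n(V)$ we have $f(t)\pm(e^n-\vert f(t)\vert_n)\in \mathcal{U}_n(V)$ for every $t$, so $h_{\pm}$ lands in $\mathcal{U}_n(V)$; evaluating at the endpoints gives $h_{\pm}(0)=u\pm(e^n-\vert u\vert_n)$ and $h_{\pm}(1)=v\pm(e^n-\vert v\vert_n)$, which is exactly $u\pm(e^n-\vert u\vert_n)\sim_h v\pm(e^n-\vert v\vert_n)$ in $\mathcal{U}_n(V)$.

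The only genuine content is confirming that the constructed paths stay inside $\mathcal{OP}_n(V)$ and $\mathcal{U}_n(V)$ for every $t$, and this is supplied pointwise by Definition \ref{86} and Lemma \ref{78} respectively. I do not expect a real obstacle here: the hypothesis that $\vert\cdot\vert_n$ is continuous is precisely what upgrades these pointwise membership facts into continuous paths, and once that is in hand the argument is just the observation that a composition of continuous maps is continuous.
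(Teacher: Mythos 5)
Your proposal is correct and follows essentially the same route as the paper: the paper also fixes a continuous path $f$ witnessing $u\sim_h v$, sets $g_1(t)=\vert f(t)\vert_n$ and $g_2^{\pm}(t)=f(t)\pm(e^n-\vert f(t)\vert_n)$, and invokes Definition \ref{86} and Lemma \ref{78} for pointwise membership together with continuity of $\vert\cdot\vert_n$ for continuity of the composed paths. No discrepancies to report.
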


\begin{proof}
Assume that $u\sim_h v$ in $\mathcal{PU}_n(V)$ and $\vert \cdot\vert_n$ is continuous. Then there exists a continuous function $f:[0,1]\to \mathcal{PU}_n(V)$ such that $f(0)=u$ and $f(1)=v.$ By Definition \ref{86} and by Lemma \ref{78}, we have that $\vert f(t)\vert \in \mathcal{OP}_n(V)$ and $f(t)\pm(e^n-\vert f(t)\vert_n) \in \mathcal{U}_n(V).$ Put $g_1(t)=\vert f(t)\vert_n$ and $g_2^{\pm}(t)=f(t)\pm(e^n-\vert f(t)\vert_n).$ Since $f$ and $\vert \cdot \vert_n$ are continuous, we get that $g_1:[0,1]\to \mathcal{OP}_n(V)$ and $g_2^{\pm}:[0,1]\to \mathcal{U}_n(V)$ are continuous functions such that $g_1(0)=\vert u\vert_n,g_1(1)=\vert v\vert_n,g_2^{\pm}(0)=u\pm(e^n-\vert u\vert_n)$ and $g_2^{\pm}(1)=v\pm(e^n-\vert v\vert_n).$ Thus $\vert u\vert_n \sim_h \vert v\vert_n $ and $u\pm(e^n-\vert u\vert_n) \sim_h v\pm(e^n-\vert v\vert_n)$ in $\mathcal{OP}_n(V)$ and $\mathcal{U}_n(V)$ respectively. 
\end{proof}

Let $(G_1,*_1)$ and $(G_2,*_2)$ be two groups. We write: $G_1\oplus G_2=\lbrace (g_1,g_2):g_1\in G_1,g_2\in G_2 \rbrace$ and $(g_1,g_2)*(h_1,h_2)=(g_1*_1h_1,g_2*_2h_2).$ Then $(G_1\oplus G_2,*)$ is also a group and it is called the direct sum of $G_1$ and $G_2.$ 

Finally, we provides a realtion among $K_0,K_1$ and $K$ by the following result:

\begin{theorem}\label{85}
Let $(V, e)$  be an absolute matrix order unit space satisfying $(T)$ such that $\vert \cdot \vert_n$ is continuous for all $n \in \mathbb{N}$. If $p\sim_h q$ implies $p \sim q$ for all $p,q \in \mathcal{OP}_n(V)$ and $n\in \mathbb{N},$ then there exists a surjective group homomorphism $\theta:K(V)\to K_0(V)\oplus K_1(V).$ In this case, $K(V) \big/ \ker(\theta)\cong K_0(V)\oplus K_1(V).$
\end{theorem}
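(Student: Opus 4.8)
The plan is to obtain $\theta$ from a map defined directly on partial unitaries, by splitting each $w\in\mathcal{PU}_n(V)$ into a ``projection part'' $\vert w\vert_n$ and a ``unitary part''. For $w\in\mathcal{PU}_n(V)$ I would set
$$\widehat{w} := w + (e^n - \vert w\vert_n),$$
which lies in $\mathcal{U}_n(V)$ by Lemma \ref{78} (for a partial unitary $\vert w\vert_n=\vert w^*\vert_n\in\mathcal{OP}_n(V)$, so $w\perp e^n-\vert w\vert_n$ and the sum is unitary), while $\vert w\vert_n\in\mathcal{OP}_n(V)$. Since $(T)$ guarantees $K_0(V)$ is defined, this lets me define
$$\Phi(w) = \big([(\vert w\vert_n,0)],\, [(\widehat{w},e)]\big)\in K_0(V)\oplus K_1(V),$$
using the canonical map $p\mapsto[(p,0)]$ into $K_0(V)$ and $\Upsilon_V(a)=[(a,e)]$ into $K_1(V)$.

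Next I would verify that $\Phi$ is additive for $\oplus$. Because $\vert w_1\oplus w_2\vert_{m+n}=\vert w_1\vert_m\oplus\vert w_2\vert_n$ (Definition \ref{152}(3)) and consequently $\widehat{w_1\oplus w_2}=\widehat{w_1}\oplus\widehat{w_2}$, both coordinates split as direct sums, giving $\Phi(w_1\oplus w_2)=\Phi(w_1)+\Phi(w_2)$. Three special evaluations are worth recording, as they later drive surjectivity: $\vert 0_k\vert_k=0$ and $\widehat{0_k}=e^k$, so $\Phi(0_k)$ is the identity of $K_0(V)\oplus K_1(V)$; an order projection $p$ is positive so $\vert p\vert_n=p$ and $\widehat{p}=e^n$; and a unitary $a$ has $\vert a\vert_n=e^n$ and $\widehat{a}=a$.

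The main obstacle is to show $\Phi$ respects $\approx_K$, and this is where the two standing hypotheses enter. Suppose $w_1\sim_h w_2$ in $\mathcal{PU}_n(V)$. Since $\vert\cdot\vert_n$ is continuous, Proposition \ref{87} yields $\vert w_1\vert_n\sim_h\vert w_2\vert_n$ in $\mathcal{OP}_n(V)$ and $\widehat{w_1}\sim_h\widehat{w_2}$ in $\mathcal{U}_n(V)$. The hypothesis $p\sim_h q\Rightarrow p\sim q$ then gives $\vert w_1\vert_n\sim\vert w_2\vert_n$, so the $K_0$-coordinates coincide; and $\sim_h\Rightarrow\sim_1\Rightarrow\approx_1$ (Corollary \ref{35}(1), Proposition \ref{61}(1)) makes the $K_1$-coordinates coincide. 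Hence $\Phi(w_1)=\Phi(w_2)$. Because $\Phi(0_k)$ is the identity and $\Phi$ is additive, $\Phi$ is unchanged under padding by zero blocks, so it respects $\sim_K$; additivity together with cancellation in the group $K_0(V)\oplus K_1(V)$ then upgrades this to invariance under $\approx_K$. I would then put $\theta([(u,v)])=\Phi(u)-\Phi(v)$; well-definedness on $K(V)$ and the homomorphism property follow at once from additivity of $\Phi$ and the definition of $\equiv_K$.

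Finally, for surjectivity I would exploit the special evaluations. For an order projection $p$ (a partial unitary with $\widehat{p}=e^n$) one gets $\theta([(p,0)])=([(p,0)],0)$, so $K_0(V)\oplus\{0\}$ lies in the image. For a unitary $a$ (a partial unitary with $\vert a\vert_n=e^n$, $\widehat{a}=a$) one gets $\theta([(a,e^n)])=\Phi(a)-\Phi(e^n)=(0,[(a,e)])$, the $[(e^n,0)]$ contributions cancelling, so $\{0\}\oplus K_1(V)$ lies in the image as well. Since these two subgroups generate $K_0(V)\oplus K_1(V)$, the map $\theta$ is onto, and the first isomorphism theorem gives $K(V)\big/\ker(\theta)\cong K_0(V)\oplus K_1(V)$.
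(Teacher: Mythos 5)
Your proposal is correct and follows essentially the same route as the paper: both send a partial unitary $w$ to the pair $\bigl([(\vert w\vert,0)],[(w+e-\vert w\vert,e)]\bigr)$, use Proposition \ref{87} together with the hypotheses ($\sim_h\Rightarrow\sim$ on order projections, continuity of $\vert\cdot\vert_n$) and cancellation to get well-definedness, and deduce surjectivity from the evaluations at order projections and unitaries before applying the first isomorphism theorem. Your packaging (one map $\Phi$ on $\mathcal{PU}_\infty(V)$, descended through $\sim_h$, $\sim_K$, $\approx_K$ in turn) is a slightly cleaner organization of the paper's two maps $\eta$ and $\mu$, but the substance is identical.
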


\begin{proof}
Put $\eta([(u,0)])=[(\vert u \vert_m,0)] \in K_0(V)$ for all $u \in \mathcal{PU}_m(V),m\in \mathbb{N}.$ Let $u,v \in \mathcal{PU}_\infty(V)$ such that $[(u,0)]=[(v,0)].$ Without loss of generality, we may assume that $u,v \in \mathcal{PU}_m(V)$ for some $m \in \mathbb{N}.$ Therefore $u \oplus w\oplus 0_l \sim_h v\oplus w\oplus 0_l$ in $\mathcal{PU}_{m+n+l}(V)$ for some $w \in \mathcal{PU}_n(V)$ and $n,l \in \mathbb{N}.$ Since $\vert \cdot \vert_{m+n+l}$ is continuous, by Proposition \ref{87}, we have that $\vert u \vert_m\oplus \vert w\vert_n\oplus 0_l\sim_h \vert v\vert_m\oplus \vert w \vert_n \oplus 0_l$ in $\mathcal{OP}_{m+n+l}(V).$ By assumption, we get that $\vert u \vert_m\oplus \vert w\vert_n\oplus 0_l\sim \vert v\vert_m\oplus \vert w \vert_n \oplus 0_l$ so that $[(\vert u \vert_m,0)]+[(0,\vert w\vert_n)]=[(\vert v \vert_m,0)]+[(0,\vert w\vert_n)].$ By cancellation law in $K_0(V),$ we get that $[(\vert u \vert_m,0)]=[(\vert v \vert_m,0)].$ Thus, $\eta$ is well-defined. Next, we have $[(u,v)]=[(u,0)]-[(v,0)]$ for all $u,v \in \mathcal{PU}_\infty(V).$ Define $\eta([(u,v)])=\eta([(u,0)])-\eta([(v,0)])$ for all $u,v \in \mathcal{PU}_\infty(V).$ Then, $\eta$ extends to $K(V).$ It is routine to verify that $\eta: K(V) \to K_0(V)$ is a group homomorphism. By Lemma \ref{78}, it follows that $u+e^m-\vert u\vert_m \in \mathcal{U}_m(V).$ Also, put $\mu([(u,0)])=[(u+e^m-\vert u \vert_m,e)] \in K_1(V)$ for all $u \in \mathcal{PU}_m(V).$ Again, by Proposition \ref{87}, we note that $(u+e^m-\vert u\vert_m) \oplus (w+e^n-\vert w\vert_n)\oplus e^l = (u\oplus w\oplus 0_l)+e^{m+n+l}-(\vert u\vert_m \oplus \vert w\vert_n \oplus 0_l)\sim_h (v\oplus w\oplus 0_l)+e^{m+n+l}-(\vert v\vert_m \oplus \vert w\vert_n \oplus 0_l)=(v+e^m-\vert w\vert_m) \oplus (v+e^n-\vert w\vert_n)\oplus e^l.$ We can choose $l> \max\lbrace m,n\rbrace$ so that $(u+e^m-\vert u\vert_m) \oplus (w+e^n-\vert w\vert_n)\sim_1 (v+e^m-\vert w\vert_m) \oplus (v+e^n-\vert w\vert_n)$ and consequently $(u+e^m-\vert u\vert_m)\approx_1 (v+e^m-\vert w\vert_m).$ Then $[(u+e^m-\vert u\vert_m,e)]=[(v+e^m-\vert w\vert_m,e)].$ Thus, $\mu$ is also well-defined. Define $\mu([(u,v)])=\mu([(u,0)])-\mu([(v,0)])$ for all $u,v \in \mathcal{PU}_\infty(V).$ It is also routine to verify that $\mu: K(V)\to K_1(V)$ is a group homomorphism. 

Now, define a new group homomorphism $\theta:K(V)\to K_0(V)\oplus K_1(V)$ by $\theta([(u,0)])=(\eta([(u,0)]),\mu([(u,0)]))$ for all $u\in \mathcal{PU}_\infty(V).$ We claim that $\theta$ is surjective. Let $p\in \mathcal{OP}_m(V)$ and $v\in \mathcal{U}_n(V)$ for some $m,n\in \mathbb{N}.$ Without loss of generality, we can assume that $m=n.$ Put $p'=e^n-p.$ Then $\theta([(p',0)])=([(p',0)],[(e^n,e)])$ and $\theta([(v,0)])=([(e^n,0)],[(v,e)]).$ By Proposition \ref{15}(5), we get that $e^n \sim p\oplus p'$ and hence $[(e^n,0)]=[(p,0)]+[(p',0)].$ Thus, we have 
\begin{eqnarray*}
\theta([(v,0)]-[(p',0)]) &=& \theta([(v,0)])-\theta([(p',0)])\\
&=& ([(e^n,0)],[(v,e)])-([(p',0)],[(e^n,e)])\\
&=& ([(e^n,0)]-[(p',0)],[(v,e)]-[(e^n,e)])\\
&=& ([(p,0)],[(v\oplus e,e\oplus e^n)])\\
&=& ([(p,0)],[(v,e)])
\end{eqnarray*}

so that $\theta$ is surjective. Finally, by first isomorphism theorem of groups, we conclude that $K(V) \big/ \ker(\theta)\cong K_0(V)\oplus K_1(V).$
\end{proof}

\begin{remark}
If $\theta$ is injective, then $K(V)\cong K_0(V)\oplus K_1(V).$
\end{remark}


\begin{thebibliography}{100}
	
\bibitem{MA71} E. M. Alfsen, Compact Convex sets and Boundary Integrals, Springer-Verlag, Berlin-Heidelberg-New York, 1971. 

\bibitem{LA} L.\  Asimov, Universally well-capped cones, Pacific J. Math., {\bf 26} (1968), 421--431.

\bibitem{LA73} L. Asimov, Monotone extension in ordered Banach spaces and their duals, J. Lond. Math. Soc., {\bf 6} (1973), 563--569.

\bibitem{A74} L. Asimov, Complementary cones in dual Banach Spaces, Illinois J. Math., {\bf 18} (1974), 657--668.

\bibitem{B98} B. Blackadar, K-Theory for operator algebras, Cambridge University Press, Cambridge, 1998.
	
\bibitem{B06} B. Blackadar, Operator algebras. Theory of C$^*$-algebras and von Neumann algebras, Springer-Verlag, Berlin- Heidelberg-New York, 2006.	

\bibitem{B54} F. F. Bonsall, Sublinear functionals and ideals in partially ordered vector spaces, Proc. Lond. Math. Soc., {\bf 4} (1954), 402--418.

\bibitem{B56} F. F. Bonsall, Regular ideals of partially ordered vector spaces, Proc. Lond. Math. Soc., {\bf 6} (1956), 626--640.
	
\bibitem{CE77} M. D. Choi and E. G. Effros, Injectivity and operator spaces, J. Funct. Anal., {\bf 24} (1977), 156--209.

\bibitem{E64} D. A. Edwards, On the homeomorphic affine embedding of a locally compact cone into a Banach dual space endowed with the vague topology, Proc. Lond. Math. Soc., {\bf 14} (1964), 399--414.

\bibitem{ER88} E. G. Effros and Z. J. Ruan, On matricially normed spaces, Pacific J. Math., {\bf 132} (1988), 243--264.
	
\bibitem{JE64} A. J. Ellis, The duality of partially ordered normed linear spaces, J. Lond. Math. Soc., {\bf 39} (1964), 730--744.

\bibitem{GN} I. Gelfand and M. Neumark, On the imbedding of normed rings into the ring of operators in Hilbert space, Rec. Math. [Mat. Sbornik] N.S., {\bf 54} (1943), 197--213. 

\bibitem{KRG86} K. R. Goodearl, Partially ordered abelian groups with interpolation, Mathematical Surveys and Monographs, Amer. Math. Soc., {\bf 20} (1986).

\bibitem {J72} G. Jameson, Ordered linear spaces, Lecture Notes in mathematics, Springer-Verlag, Berlin-Heidelberg-New York, {\bf 141} (1970).

\bibitem{RVK} R.\ V.\ Kadison, A representation theory for commutative topological algebra, Mem. Amer. Math. Soc., {\bf 7} (1951). 

\bibitem{Kad51} R. V. Kadison, Order properties of bounded self-adjoint operators, Proc. Amer. Math. Soc., {\bf 2} (1951), 505--510. 

\bibitem{RJ83} R. V. Kadison and J. R. Ringrose, Fundamentals of the theory of operator algebras, Academic Press, Inc., London-New York,  1983. 

\bibitem{Kak} S. Kakutani, Concrete representation of abstract $M$-spaces, Ann. of Math., {\bf 42} (1941), 994--1024.

\bibitem{K10} A. K. Karn, A p-theory of ordered normed spaces, Positivity, {\bf 14} (2010), 441--458. 

\bibitem{K14} A. K. Karn, Orthogonality in $l_p$-spaces and its bearing on ordered Banach spaces, Positivity, {\bf 18} (2014), 223--234. 

\bibitem{K16} A. K. Karn, Orthogonality in C$^*$-algebras, Positivity, {\bf 20} (2016), 607--620.

\bibitem{K18} A. K. Karn, Algebraic orthogonality and commuting projections in operator algebras, Acta Sci. Math. (Szeged), {\bf 84} (2018), 323--353. 

\bibitem{K19} A. K. Karn and A. Kumar, Isometries of absolute order unit spaces, Positivity, {\bf 24} (2020), 1263--1277.

\bibitem{PI19} A. K. Karn and A. Kumar, Partial isometries in an absolute order unit space, Banach J. Math. Anal., {\bf 15} (2021), 1--26. 

\bibitem{KO21} A. K. Karn and A. Kumar, $K_0$-group of absolute matrix order unit spaces, Adv. Oper. Theory, {\bf 6} (2021), 1--27.

\bibitem{KV97} A. K. Karn and R. Vasudevan, Approximate matrix order unit spaces, Yokohama Math. J., {\bf 44} (1997), 73--91.
 
\bibitem{KFN} K.\ F.\ Ng, The duality of partially ordered Banach spaces, Proc. Lond. Math. Soc, {\bf 19} (1969), 269--288.

\bibitem{GKP} G. K. Pedersen, C$^*$-algebras and their automorphism groups, Academic Press, Inc., London-New York, 1979.

\bibitem{RLL00} M. R\o{}rdam, F. Larsen $\&$ N.J. Laustsen, An Introduction to K-theory for C$^*$-Algebras, Cambridge University Press, Cambridge, 2000.

\bibitem{ZJR88} Z. J. Ruan, Subspaces of C$^*$-algebras, J. Funct. Anal., {\bf 76} (1988), 217--230.

\bibitem{WO93} N. E. Wegge-Olsen, K-Theory and C$^*$-algebras, Oxford University Press, New York, 1993. 
	
\bibitem {WN73} Y. C. Wong and K. F. Ng, Partially ordered topological vector spaces, Oxford Mathematical Monographs, Clarendon Press, Oxford, 1973.	
	 
\end{thebibliography}
\end{document}